\titleformat{\subsection}[runin]
{\bfseries} {\thesubsection{.}}{0.15cm}{}[.]
\titleformat{\subsubsection}[runin]
{\em}{\thesubsubsection{.}}{0.15cm}{}[.]
\newtheorem{theorem}{Theorem}[section]
\newtheorem{proposition}[theorem]{Proposition}
\newtheorem{claim}[theorem]{Claim}
\newtheorem{lemma}[theorem]{Lemma}
\newtheorem{corollary}[theorem]{Corollary}
\theoremstyle{definition}
\newtheorem{definition}[theorem]{Definition}
\newtheorem{remark}[theorem]{Remark}
\newtheorem{problem}[theorem]{Problem}
\newtheorem{example}[theorem]{Example}
\numberwithin{equation}{section}
\numberwithin{figure}{section}
\newcommand\Ccal{\mathcal{C}}
\newcommand\Lcal{\mathcal{L}}
\newcommand\Pcal{\mathcal{P}}
\newcommand\Ascr{\mathscr{A}}
\newcommand\Cscr{\mathscr{C}}
\newcommand\Lscr{\mathscr{L}}
\newcommand\Oscr{\mathscr{O}}
\newcommand\B{\mathbb{B}}
\newcommand\C{\mathbb{C}}
\newcommand\D{\overline{\mathbb D}}
\newcommand\CP{\mathbb{CP}}
\renewcommand\D{\mathbb D}
\newcommand\N{\mathbb{N}}
\newcommand\R{\mathbb{R}}
\newcommand\T{\mathbb{T}}
\newcommand\Z{\mathbb{Z}}
\renewcommand\c{\mathbb{C}}
\newcommand\cd{\overline{\mathbb D}}
\renewcommand\d{\mathbb D}
\newcommand\n{\mathbb{N}}
\renewcommand\r{\mathbb{R}}
\renewcommand\t{\mathbb{T}}
\newcommand\z{\mathbb{Z}}
\newcommand\igot{\mathfrak{i}}
\renewcommand\igot{\mathfrak{i}}
\newcommand\Ygot{\mathfrak{Y}}
\renewcommand\imath{\igot}
\newcommand\hra{\hookrightarrow}
\newcommand\lra{\longrightarrow}
\newcommand\wt{\widetilde}
\newcommand\wh{\widehat}
\newcommand\di{\partial}
\newcommand\dibar{\overline\partial}
\newcommand\dist{\mathrm{dist}}
\renewcommand\span{\mathrm{span}}
\newcommand\length{\mathrm{length}}
\def\dist{\mathrm{dist}}
\def\span{\mathrm{span}}
\def\length{\mathrm{length}}
\newcommand\Id{\mathrm{Id}}
\begin{document}

\fancyhead[LO]{Holomorphic Legendrian curves}
\fancyhead[RE]{A.\ Alarc\'on, F.\ Forstneri\v c, and F.J.\ L\'opez}
\fancyhead[RO,LE]{\thepage}

\thispagestyle{empty}

%% Title
\vspace*{1cm}
\begin{center}
{\bf\LARGE Holomorphic Legendrian curves}

\vspace*{0.5cm}

%% Authors
{\large\bf Antonio Alarc\'on, Franc Forstneri\v c, and Francisco J.\  L\'opez}
\end{center}

%% Addresses and finantial support
%\footnote[0]{\vspace*{-0.4cm}
%}
%% Abstract, keywords, and MSC

\vspace*{1cm}

\begin{quote}
{\small
\noindent {\bf Abstract}\hspace*{0.1cm}
In this paper we study holomorphic Legendrian curves in the standard holomorphic contact structure on $\c^{2n+1}$ for any $n\in\n$. 
We provide several approximation and desingularization results which enable us to prove general existence theorems, 
settling some of the open problems  in the subject.
In particular, we show that every open Riemann surface $M$ admits a proper holomorphic Legendrian embedding 
$M\hra\c^{2n+1}$, and we prove that for every compact bordered Riemann surface $M=\mathring M\cup bM$ there exists a 
topological embedding $M\hra \c^{2n+1}$ whose restriction to the interior is a complete holomorphic Legendrian embedding 
$\mathring M\hra \c^{2n+1}$.  As a consequence, we infer that every complex contact manifold $W$ carries relatively 
compact holomorphic Legendrian curves, normalized by any given bordered Riemann surface, which are complete with respect to 
any Riemannian metric on $W$.

\vspace*{0.1cm}

\noindent{\bf Keywords}\hspace*{0.1cm} Riemann surface, complex contact manifold, Legendrian curve.

\vspace*{0.1cm}

%\noindent{\bf Mathematics Subject Classification (2010)}\hspace*{0.1cm} 32B15, 32H02, 14H50, 53C42.

\noindent{\bf MSC (2010):}\hspace*{0.1cm} 53D10, 32B15, 32E30, 32H02.}
\end{quote}

%%%%%%%%%%
%%%%%%%%%%
%%%%%%%%%%
%%%%%%%%%%
%%%%%%%%%%
%%%%%%%%%%

\section{Introduction and main results} 
\label{sec:intro}

Let $W$ be a complex manifold of odd dimension $2n+1\ge 3$. 
A holomorphic vector subbundle $\Lscr\subset TW$ of complex codimension one in the tangent bundle $TW$ 
defines a {\em holomorphic contact structure} on $W$ if every point $p\in W$ admits an open neighborhood 
$U\subset W$ such that $\Lscr|_U=\ker\tau$ is defined by a holomorphic $1$-form $\tau$ on $U$ 
satisfying 
\[
	\tau\wedge(d\tau)^n= \tau\wedge d\tau\, \wedge \stackrel{\text{$n$ times}}
	{\cdots}\wedge\, d\tau\neq 0\quad \text{everywhere on $U$}.
\]
This nondegeneracy condition depends only on the subbundle $\Lscr$ and not on the particular choice of 
the local defining $1$-form. The pair $(W,\Lscr)$ is called a {\em complex contact manifold}. When $\Lscr=\ker\tau$ 
for a globally defined holomorphic $1$-form $\tau$ on $W$, we shall write $(W,\tau)$ instead of $(W,\Lscr)$.
A contact subbundle $\Lscr$ is {\em maximally nonintegrable},
meaning that it has no integral complex submanifolds (i.e., tangent to $\Lscr$) of dimension $>n$.
In fact, local holomorphic vector fields tangent to $\Lscr$, along with their first-order commutators, span $TW$ at every point. 
Although the geometry of smooth contact manifolds is a classical subject with large literature devoted to it
(we refer e.g.\ to Geiges' surveys \cite{Geiges2008,Geiges2012} and the references therein), 
many important questions remain open in the holomorphic case.

The most basic example of a complex contact manifold is the complex Euclidean space $\c^{2n+1}$ 
endowed with the {\em standard holomorphic contact form} 
\begin{equation}\label{eq:contact}
	\eta = dz + \sum_{j=1}^n x_j \, dy_j.
\end{equation}
Here,  $(x_1,y_1,\ldots,x_n,y_n,z)$ denote the complex coordinates on  $\c^{2n+1}$.
By Darboux's theorem (see Theorem \ref{th:Darboux-contact} in the Appendix), 
every complex contact manifold $(W^{2n+1},\Lscr)$  is locally contactomorphic to $(\c^{2n+1},\eta)$, 
meaning that in a neighborhood of any point $p\in W$ there are local holomorphic coordinates $(x_1,y_1,\ldots,x_n,y_n,z)$ in which 
$\Lscr=\ker\eta$. 

Let $(W,\Lscr)$ be  a  complex contact manifold of dimension $2n+1$. 
A holomorphic map $F\colon M\to W$ from a complex manifold $M$
is said to be $\Lscr$-{\em Legendrian} if 
\[
	\text{$dF_p(T_p M)\subset \Lscr_{F(p)}$ holds for all points $p\in M$.} 
\]
If $\Lscr=\ker\tau$ for a contact $1$-form $\tau$, then the above condition is equivalent to
\[
	F^*\tau = 0.
\]
This condition is independent of the local parametrization of $M$ and hence can be treated in local holomorphic coordinates on $M$. 
If $F$ is nondegenerate (i.e., an immersion at a generic point of $M$), then $\dim_\C M\le n$ since $\Lscr$ is maximally nonintegrable. 
(When $\dim M<n$, such maps are often referred to as {\em isotropic}; we prefer to use the term Legendrian
even in this {\em subcritical case}.)
The case when $M$ is compact and $W$ is the projective space $\CP^{2n+1}$, endowed with the contact form obtained by projectivizing 
the standard symplectic form of $\c^{2n+2}$ (see Subsec.\ \ref{ss:eta}), has been a major focus of interest in the theory.
An important result in this subject is that every compact Riemann surface embeds as a complex Legendrian curve in $\CP^3$ 
(see Bryant \cite[Theorem G]{Bryant1982JDG} and Segre \cite{Segre1926}). 
On the other hand, when $M$ is an {\em open} Riemann surface, an $\Lscr$-Legendrian 
holomorphic map $F\colon M\to W$ is called an $\Lscr$-{\em Legendrian curve}; if $(W,\Lscr)=(\c^{2n+1},\eta)$ then we shall 
just say that $F$ is a {\em Legendrian curve} in $\C^{2n+1}$. The latter are complex analogues of real Legendrian curves in $\r^{2n+1}$ 
which play a major role in differential geometry.

The aim of this paper is a systematic investigation of holomorphic Legendrian curves in $\c^{2n+1}$ for any $n\in\n$.
In particular, we settle some general questions raised by Alarc\'on and Forstneri\v c in \cite[page 740]{AlarconForstneric2014IM},
as well as a couple of other well-known open problems in the theory. Moreover, as we shall see later in this introduction, our results 
also find applications to holomorphic Legendrian curves in an arbitrary  complex contact manifold. 

The following first main result of the paper concerns the existence of properly embedded Legendrian curves in 
the standard contact manifold $(\C^{2n+1},\eta)$.

%
% Theorem 1.1: Runge approximation by proper Legendrian embeddings
%
\begin{theorem}[Runge approximation by proper Legendrian embeddings]\label{th:intro-Runge}
Let $M$ be an open Riemann surface and $K\subset M$ be a smoothly bounded compact domain in $M$ 
whose complement has no relatively compact connected components. Then every holomorphic Legendrian curve 
$F\colon K\to\c^{2n+1}$ $(n\in\n)$ can be approximated as closely as desired in the $\Cscr^1(K)$-topology by proper holomorphic 
Legendrian embeddings $\wt F\colon M\hra \c^{2n+1}$. Furthermore, given a pair of indices $\{i, j\}\subset\{1,2,\ldots,2n+1\}$
with $i\neq j$, we may choose $\wt F=(\wt F_1,\wt F_2,\ldots,\wt F_{2n+1})$ as above such that $(\wt F_i,\wt F_j)\colon M\to\c^2$ 
is a proper map. 
\end{theorem}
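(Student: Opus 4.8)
\emph{Plan.} The starting observation is algebraic: writing $F=(F_1,\dots,F_{2n+1})=(X_1,Y_1,\dots,X_n,Y_n,Z)$, the Legendrian condition $F^*\eta=0$ reads $dZ=-\sum_{j=1}^n X_j\,dY_j=:-\theta_F$. Thus a holomorphic Legendrian curve on a Riemann surface $N$ is nothing but a tuple $(X_1,Y_1,\dots,X_n,Y_n)\in\Ocal(N)^{2n}$ whose associated holomorphic $1$-form $\theta_F$ has vanishing periods over $H_1(N;\z)$, together with a constant fixing $Z=c-\int^{\,\cdot}\theta_F$; the curve is an immersion, resp.\ an embedding, at generic members of this family because $n\ge1$ forces $2n+1\ge3$. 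I would prove the theorem by a recursion along an exhaustion of $M$ in which each step invokes, in combination, (a) the Mergelyan--Runge approximation theorem with period control for Legendrian curves, (b) the general-position (desingularization) theorem for Legendrian curves --- both established earlier in this paper --- and (c) a labyrinth of walls in the style of Alarc\'on and L\'opez to push a prescribed pair of coordinate functions off to infinity towards $bM$.

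\emph{Set-up.} Using (a) and the hypothesis that $K$ is $\Ocal(M)$-convex, approximate $F$ in $\Cscr^1(K)$ by a holomorphic Legendrian curve; fix such an approximant $F_0$ and a smoothly bounded $\Ocal(M)$-convex compact domain $M_0$ with $K\subset\mathring M_0$ on which $F_0$ is defined, and fix $p_0\in\mathring M_0$. Choose a normal exhaustion $M_0\subset\subset M_1\subset\subset\cdots$, $\bigcup_k M_k=M$, by smoothly bounded $\Ocal(M)$-convex compact domains such that each inclusion $M_k\hookrightarrow M_{k+1}$ is either \emph{noncritical} (the collar $\Omega_k:=M_{k+1}\setminus\mathring M_k$ is a union of annuli) or \emph{critical} (it attaches a single handle along an embedded arc $E_k$ with $M_k\cup E_k$ $\Ocal(M)$-convex). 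I would then construct holomorphic Legendrian embeddings $F_k\colon M_k\hra\c^{2n+1}$ and numbers $\varepsilon_k>0$, decreasing fast with $\sum_k\varepsilon_k<1$ and each $\varepsilon_k$ below the $\Cscr^1(M_k)$-radius within which embeddings of $M_k$ are stable, such that
\begin{enumerate}[\rm(i)]
\item $\|F_{k+1}-F_k\|_{\Cscr^1(M_k)}<\varepsilon_k$; and
\item $|F_{k+1}(p)|>k+1$ and $|(F_{k+1})_i(p)|+|(F_{k+1})_j(p)|>k+1$ for every $p\in\Omega_k$.
\end{enumerate}

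\emph{Inductive step.} Given $F_k$, build in $\Omega_k$ (with $E_k$ adjoined in the critical case) a compact labyrinth $\Gamma_k$ --- a finite union of thin walls, one of them just inside $bM_{k+1}$ --- with $M_k\cup\Gamma_k$ $\Ocal(M)$-convex and so dense and thin that any holomorphic map on $M_{k+1}$ of modulus $>k+2$ on $\Gamma_k$ automatically has modulus $>k+1$ on all of $\Omega_k$ (the standard consequence of the construction). By Runge approximation on $M_k\cup\Gamma_k$, add to a suitable pair of the coordinate functions of $F_k$ holomorphic functions that are $<\varepsilon_k$ on $M_k$ but of modulus $>k+3$ on $\Gamma_k$; then restore the vanishing of the periods of the new form $\theta$ by the period-domination content of (a), reintegrate $Z$, in the critical case extend the curve across $E_k$ with controlled periods (again (a)), and finally apply (b) to remove the finitely many self-intersections and branch points on $M_{k+1}$. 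With all perturbations chosen small enough this yields $F_{k+1}$ satisfying (i)--(ii). Which coordinates to enlarge: if $2n+1\notin\{i,j\}$ one enlarges $F_i$ and $F_j$ directly, as they enter $\theta_F$ only algebraically; if $j=2n+1$, one enlarges $F_i$ and instead gives some $Y_l$ a large derivative along $\Omega_k$, which makes $Z=c-\int^{\,\cdot}\theta_F$ large there too --- or, equivalently, one first applies a linear contact automorphism of $(\c^{2n+1},\eta)$ permuting the pairs $(x_l,y_l)$ and interchanging $x_l\leftrightarrow y_l$ so as to reduce to $\{i,j\}=\{1,2\}$.

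\emph{Conclusion, and the main difficulty.} By (i) the $F_k$ converge in $\Cscr^1_{\mathrm{loc}}(M)$ to a holomorphic map $\wt F\colon M\to\c^{2n+1}$, Legendrian because $F^*\eta=0$ is a closed condition; the choice of the $\varepsilon_k$ makes $\wt F|_{M_k}$ an embedding for every $k$, so $\wt F$ is an injective holomorphic immersion. For $p\in\Omega_k$ one has $|\wt F(p)-F_{k+1}(p)|\le\sum_{l>k}\varepsilon_l<1$ (as $p\in M_l$ for all $l\ge k+1$), hence $|\wt F(p)|>k$ and $|\wt F_i(p)|+|\wt F_j(p)|>k$; since $k$ is arbitrary, both $\wt F$ and $(\wt F_i,\wt F_j)$ are proper, so $\wt F\colon M\hra\c^{2n+1}$ is a proper Legendrian embedding, and it is $\Cscr^1(K)$-close to $F$ because $K\subset M_0$ and the successive corrections there are summably small. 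The real obstacle is not the dimension count, nor the by-now-routine labyrinth, but step (a): performing the large coordinate deformations while keeping the approximants \emph{exactly} Legendrian \emph{and} simultaneously embedded requires that the period map be a submersion along a finite-dimensional family of Legendrian deformations which is compatible with general position and survives handle attachments. Providing such families is precisely the purpose of this paper's earlier approximation and desingularization theorems; granting them, the recursion above goes through.
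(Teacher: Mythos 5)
Your overall architecture (Mergelyan approximation with period domination, general position, and a recursion over an exhaustion with noncritical and critical steps) matches the paper's proof of Theorem \ref{th:proper}, but the step that actually produces properness of $(\wt F_i,\wt F_j)$ --- your item (c) --- contains a genuine gap. A labyrinth of compact walls $\Gamma_k$ in the collar $\Omega_k$ is the tool for \emph{completeness} (it forces divergent \emph{paths} to pick up length), not for properness: the asserted ``standard consequence'' that a holomorphic map of modulus $>k+2$ on a dense thin labyrinth automatically has modulus $>k+1$ on all of $\Omega_k$ is false. By Runge approximation on the $\Oscr(M)$-convex set $M_k\cup\Gamma_k$ one can prescribe a holomorphic function to be huge on the walls and tiny in between, and nothing in your construction prevents the corrected Legendrian map from re-entering a bounded region off the labyrinth. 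Moreover your condition (ii) is internally inconsistent with (i): $\Omega_k=M_{k+1}\setminus\mathring M_k$ contains $bM_k\subset M_k$, so demanding $|(F_{k+1})_i|+|(F_{k+1})_j|>k+1$ on all of $\Omega_k$ while $\|F_{k+1}-F_k\|_{\Cscr^1(M_k)}<\varepsilon_k$ would force $F_k$ to satisfy essentially the same bound on $bM_k$, which the previous step only provides with $k$ in place of $k+1$.

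The paper's Lemma \ref{lem:proper} replaces the labyrinth by an alternating two-coordinate scheme: the inductive bound $\max\{|\pi_\sigma\circ f|,|\pi_\varsigma\circ f|\}>\mu$ on the inner boundary lets one split each boundary curve into arcs on which one of the two chosen coordinates is already $>\mu$; near each such arc one keeps \emph{that} coordinate exactly fixed throughout the deformation and makes the \emph{other} coordinate large only on disks $\Upsilon_{l,a}$ abutting the outer boundary, and then repeats with the roles of $\sigma$ and $\varsigma$ exchanged. The price is the need to approximate generalized Legendrian curves while holding one prescribed component exactly fixed, which is the second (and hardest) part of Lemma \ref{lem:fixedcomponents}: when the fixed component is $z$ one must use a multiplicative period-dominating spray $x_1\,e^{\sum_l\zeta_l g_l}$ and control the zeros of $x_1$ and the critical points of $y_1$ so that the reintegrated component remains holomorphic. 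Your proposal does not address how to fix one coordinate while restoring the periods (an additive spray in that coordinate would perturb it), nor why giving ``some $Y_l$ a large derivative'' makes $Z=c-\int\theta_F$ large (the integral can oscillate and cancel). These are exactly the points where the paper's argument does its real work.
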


Theorem \ref{th:intro-Runge} shows in particular that every open Riemann surface properly embeds into $\c^3$ as a complex Legendrian curve. 
This result, which is analogous to Bryant's embedding theorem for compact Riemann surfaces as complex Legendrian curves in 
$\CP^3$ (see \cite[Theorem G]{Bryant1982JDG} and \cite{Segre1926}), 
has been a long-standing open problem in complex contact geometry.

Theorem \ref{th:intro-Runge} is a particular case of Theorem \ref{th:proper}, where the latter result
also ensures approximation of Mergelyan type on certain
{\em admissible subsets} (see Section \ref{sec:Mergelyan} for definitions and preliminary results). The key ingredients in the proof of 
Theorem \ref{th:proper} are a Mergelyan theorem for Legendrian curves (see Lemma \ref{lem:fixedcomponents}) and a general position
theorem ensuring that every holomorphic Legendrian curve $K\to\c^{2n+1}$, where $K$ is as in Theorem \ref{th:intro-Runge}, 
may be approximated in the $\Cscr^1(K)$-topology by holomorphic Legendrian embeddings $K\hra\c^{2n+1}$ (see Lemma 
\ref{lem:position}). The methods of proof exploit the classical Runge and Mergelyan approximation theorems for holomorphic functions on
open Riemann surfaces and the construction of period-dominating sprays of Legendrian curves. Further, to ensure the general position 
result, we use the classical proof of the transversality theorem due to Abraham \cite{Abraham1963TAMS}. 
Similar techniques have been developed by the authors in the theories of minimal surfaces in the real 
Euclidean space $\r^N$ $(N\ge3)$, null holomorphic curves in $\c^N$, 
and, more generally, holomorphic immersions of open Riemann surfaces into $\c^N$ which are directed by 
Oka conical subvarieties (see \cite{AlarconLopez2012JDG,AlarconLopez2013MA,AlarconLopez2014TAMS,AlarconForstneric2014IM,AlarconLopez2015GT,AlarconForstnericLopez2016MZ,AlarconForstnericLopez2016NoOrientable,AlarconForstnericLopez2016Gauss} and the references therein). 
The main difference here is that the holomorphic distribution controlling Legendrian curves depends on the base point, 
and this requires a novel approach. Finally, with Lemmas \ref{lem:fixedcomponents} and \ref{lem:position} in hand, 
Theorem \ref{th:proper} follows by a standard recursive argument.

Our second main theorem concerns the existence of complete bounded Legendrian curves with Jordan boundaries
in the contact manifold $(\C^{2n+1},\eta)$ (cf.\ \eqref{eq:contact}) for any $n\in \n$.

%
% Complete Legendrian curves with Jordan boundaries
%
\begin{theorem}[Complete Legendrian curves with Jordan boundaries]\label{th:intro-Jordan}
Let $M$ be a compact bordered Riemann surface with nonempty boundary $bM$. Every Legendrian curve 
$F\colon M\to\c^{2n+1}$ $(n\in\n)$ of class $\Ascr^1(M)$ can be approximated uniformly on $M$ by continuous injective maps 
$\wt F\colon M\hra\c^{2n+1}$ whose restriction to the interior $\mathring M=M\setminus bM$ is a 
complete holomorphic Legendrian embedding $\mathring M\hra \c^{2n+1}$.
\end{theorem}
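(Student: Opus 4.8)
The plan is to combine the Runge-with-proper-embeddings machinery of Theorem~\ref{th:intro-Runge} (more precisely of Theorem~\ref{th:proper}) with a Riemann--Hilbert type boundary-manipulation technique in order to push the boundary curve $F(bM)$ outward in controlled steps, making the intrinsic diameter of the curve diverge while keeping the image relatively compact. Concretely, I would first normalize $M$ as a smoothly bounded domain in an open Riemann surface $R$, so that $F$ extends slightly past $bM$, and fix a conformal metric on $M$ together with an exhaustion of $\mathring M$ by smoothly bounded compact domains $M_1\Subset M_2\Subset\cdots$ with $\bigcup_k M_k=\mathring M$. The goal is to construct a sequence of Legendrian curves $F_k\in\Ascr^1(M)$ converging uniformly on $M$, such that (i) $\dist_{F_k}(M_k,bM)>k$ where $\dist_{F_k}$ is the distance in the metric induced by $F_k$, and (ii) the convergence is fast enough that the limit $\wt F$ is continuous on $M$, holomorphic Legendrian on $\mathring M$, and still satisfies a completeness lower bound. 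Property (i) for the limit will then force $\wt F|_{\mathring M}$ to be complete, since any divergent path in $\mathring M$ eventually leaves every $M_k$.

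The inductive step is where the real work lies. Given $F_k$, I want to produce $F_{k+1}$ close to $F_k$ on $M$, agreeing with it very closely on $M_k$, but with the induced metric blown up in the collar $M\setminus M_{k}$. The standard device is a Legendrian version of the Riemann--Hilbert problem: one deforms $F_k$ near the boundary $bM$ (or near the boundary of a slightly larger domain) along carefully chosen Legendrian discs so that the new curve picks up a large amount of arclength near $bM$ while its sup-norm distance from $F_k$ stays small. This is exactly the kind of tool the introduction advertises as being developed in the paper; I would invoke the Legendrian Riemann--Hilbert lemma (the analogue of the results used for null curves and minimal surfaces in the cited works \cite{AlarconForstneric2014IM,AlarconForstnericLopez2016MZ}) to enlarge intrinsic distances by a prescribed amount at the cost of an arbitrarily small uniform perturbation. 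One applies it finitely many times, once for each boundary component or each piece of a suitable labyrinth in the collar, using the general position / period-domination sprays to stay inside the Legendrian class and to correct periods. After the metric has been enlarged, a further application of the Mergelyan-type approximation (Lemma~\ref{lem:fixedcomponents}) together with the general position Lemma~\ref{lem:position} replaces the resulting curve by an embedded Legendrian curve of class $\Ascr^1(M)$ that still satisfies the distance estimate, completing the step.

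The remaining ingredient is to upgrade the limit map to a \emph{topological embedding of the closed surface} $M$, i.e.\ to make $\wt F$ injective and continuous up to and including $bM$. For this I would, at each step, also impose that $F_{k+1}$ be an embedding (possible by Lemma~\ref{lem:position}) and that it be uniformly close to $F_k$ not just on $M_k$ but on all of $M$, with the closeness on $bM$ summable; a Cauchy argument then gives continuity of $\wt F$ on $M$. Injectivity on $\mathring M$ follows from a standard diagonal/transversality argument: one arranges that on $M_k$ the map $\wt F$ differs from $F_k$ by less than half the minimal self-distance of the embedding $F_k|_{M_k}$, so $\wt F|_{M_k}$ remains injective, hence $\wt F|_{\mathring M}$ is injective. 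Injectivity \emph{including} the boundary, and the fact that $\wt F(bM)$ consists of Jordan curves disjoint from $\wt F(\mathring M)$, is the delicate point: it requires controlling the boundary behavior so that distinct boundary points stay separated in the limit and boundary points are not approximated by interior points. This is handled by making the final perturbations supported away from $bM$ in a neighborhood that shrinks slowly, so that $\wt F|_{bM}=\lim F_k|_{bM}$ and one can keep $\wt F|_{bM}$ injective and transverse to the interior image throughout the construction.

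The main obstacle, as in all such completeness constructions, is reconciling the two competing demands in the inductive step: the Riemann--Hilbert deformation must simultaneously (a) add a large, prescribed amount of intrinsic length in the collar, (b) change the map by only an arbitrarily small amount in sup-norm, and (c) remain Legendrian with all periods vanishing so that the result is again a well-defined curve on $M$ (not just on the universal cover). Item (c) is what distinguishes the Legendrian setting from the unconstrained one and is where the base-point dependence of the controlling distribution noted in the introduction makes the argument genuinely harder; the resolution is to perform the Riemann--Hilbert modification with a parameter and then compose with a period-dominating spray of Legendrian curves, solving for the parameter by the implicit function theorem so that periods are corrected while the estimates (a)--(b) are preserved up to a controlled error. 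Everything else — the exhaustion, the Cauchy estimates, the diagonal injectivity argument — is routine once this step is in place.
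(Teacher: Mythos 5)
Your overall architecture (exhaustion, Legendrian Riemann--Hilbert deformations near the boundary, period-dominating sprays to restore vanishing periods, Mergelyan approximation plus general position to obtain embeddings) matches the paper's, and you correctly single out the period problem as the Legendrian-specific difficulty in the Riemann--Hilbert step. However, there is a genuine gap at the quantitative heart of the inductive step. You require the boundary deformation to simultaneously ``(a) add a large, prescribed amount of intrinsic length'' and ``(b) change the map by only an arbitrarily small amount in sup-norm''. These demands are incompatible: in the Riemann--Hilbert construction the boundary values move by an amount comparable to the length $\mu$ being added, since the attached Legendrian boundary disks have extrinsic radius $\mu$. If you make the boundary displacements summable, the added lengths are summable too and the limit is not complete; if you make the added lengths divergent, naive triangle-inequality bookkeeping gives divergent boundary displacement and no uniform convergence on $M$. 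The paper's resolution --- absent from your sketch --- is the Pythagorean estimate of Lemma \ref{lem:Jordan0}: the Riemann--Hilbert push is performed along Legendrian disks contained in affine $2$-planes \emph{Hermitian-orthogonal} to the current error vector $f-\Ygot$ relative to a fixed reference boundary map $\Ygot$, so that gaining length $\mu$ increases the boundary error only from $\delta$ to $\sqrt{\delta^2+\mu^2}$; iterating with $\mu_j=c/j$ makes $\sum_j\mu_j$ diverge (completeness) while $\delta_0^2+\sum_j\mu_j^2$ stays bounded (uniform convergence on all of $M$ and continuity of $\wt F$ up to $bM$). Relatedly, keeping the perturbations ``supported away from $bM$'' is not available for holomorphic maps (two holomorphic maps on the connected surface $\mathring M$ agreeing near $bM$ agree identically); it is precisely the controlled, non-small motion of the boundary values that both creates the length and must be tamed by the orthogonality trick.

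Two further ingredients are missing. First, the orthogonal push must be carried out \emph{inside the Legendrian class}: one needs, for each boundary point, a holomorphic Legendrian disk through a prescribed point and confined to a prescribed affine hyperplane, depending holomorphically on that point. This is exactly Proposition \ref{pro:flat} (proper Legendrian embeddings $\C\hra\C^{2n+1}$ inside translates of suitable hyperplanes), and these curves are the boundary disks fed into the Riemann--Hilbert Lemma \ref{lem:RH}; without them there is no Legendrian analogue of ``pushing orthogonally to the error vector''. Second, a single Riemann--Hilbert push only lengthens paths that actually cross the pushed boundary arcs; the paper's Lemma \ref{lem:Jordan0} therefore needs a preliminary deformation (Claim \ref{cla:Jordan1}, built from Legendrian arcs attached at finitely many boundary points together with a conformal reparametrization in the style of Forstneri\v c--Wold) guaranteeing that any path escaping through the gaps between the pushed arcs has already accumulated length $2\mu$ in the relevant projections. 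Your appeal to ``a suitable labyrinth'' gestures at this, but the dichotomy between the two kinds of escaping paths is what makes the distance estimate hold for \emph{every} divergent path, and it needs to be built explicitly.
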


Recall that a {\em compact bordered Riemann surface} is the same thing as a smoothly bounded compact domain in an open Riemann surface
(see Section \ref{ss:RS} for a precise definition). An immersion $\varphi\colon R\to\c^N$ of a smooth open surface $R$ into $\c^N$ is 
said to be {\em complete} if the Riemannian metric on $R$ induced by the Euclidean metric of $\c^N$ via $\varphi$ is complete. 

The existence of complete bounded holomorphic Legendrian curves in $\c^{2n+1}$ for any $n\in\n$ is derived from 
Theorem \ref{th:intro-Jordan}; this settles the question posed by Mart\'in, Umehara, and Yamada in \cite[page 314]{MartinUmeharaYamada2014RMI}. Theorem \ref{th:intro-Jordan} is also connected to the problem, raised by 
Yang in 1977 (see \cite[Question II]{Yang1977} and \cite{Yang1977JDG}), about the existence of complete bounded 
immersed or embedded complex submanifolds of a complex Euclidean space.
For recent advances and a history of this problem, we refer to the papers by Alarc\'on and  Forstneri\v c \cite{AlarconForstneric2013MA}, 
Alarc\'on and L\'opez \cite{AlarconLopez2016JEMS}, Globevnik \cite{Globevnik2015AM}, and Alarc\'on et al. \cite{AlarconGlobevnikLopez2016Crelle}. 

The proof of Theorem \ref{th:intro-Jordan} requires, in addition to the above mentioned approximation and desingularization results
given by Lemmas \ref{lem:fixedcomponents} and \ref{lem:position}, 
to approximately solve certain Riemann-Hilbert type boundary value problems for Legendrian curves in $\c^{2n+1}$;
see Lemma \ref{lem:RH} and Theorem \ref{th:RH}.  

The analogues of Theorem \ref{th:intro-Jordan} have already been established for complex curves in $\c^k$ $(k\ge 2)$, 
minimal surfaces in $\r^N$ $(N\ge 3)$, and null holomorphic curves in $\c^N$ $(N\ge 3)$; 
see Alarc\'on, Drinovec Drnov\v sek, Forstneri\v c, and L\'opez \cite[Theorems 1.1 and 1.6]{AlarconDrinovecForstnericLopez2015PLMS}
and  also \cite{AlarconForstneric2015MA,AlarconDrinovecForstnericLopez2015} where approximate solutions to Riemann-Hilbert 
problems for minimal surfaces and null curves are provided. 
With the Mergelyan theorem, the desingularization theorem, and the Riemann-Hilbert method for Legendrian curves in hand, the proof of Theorem \ref{th:intro-Jordan} is an adaptation of \cite[proof of Theorem 1.1]{AlarconDrinovecForstnericLopez2015PLMS}.
For this reason, and with simplicity of exposition in mind, we provide the details only in the case when $M$ 
is the closed unit disk $\cd\subset\c$ (see Theorem \ref{th:Jordan}); the proof of the general case is a simple adaptation
of this special case as in the cited works. 
An important ingredient in the proof is the observation that almost every affine complex hyperplane of 
$\c^{2n+1}$ contains properly embedded Legendrian curves $\c\hra\c^{2n+1}$ passing through any given point in the 
hyperplane (see Proposition \ref{pro:flat}).

As a consequence of Theorem \ref{th:intro-Jordan} and Darboux's theorem for complex contact manifolds (see Theorem \ref{th:Darboux-contact}), we obtain the following existence result for complete, relatively compact, Legendrian curves in an arbitrary complex contact manifold.

\begin{corollary}\label{co:Wtau}
Let $(W,\Lscr)$ be a complex contact manifold. 
Given any  compact bordered Riemann surface $M$, there exists a continuous injective map $M\hra W$ whose restriction 
to $\mathring M$ is a holomorphic Legendrian embedding that is complete with respect to every Riemannian metric on $W$.
\end{corollary}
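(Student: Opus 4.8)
The plan is to deduce the corollary from Theorem \ref{th:intro-Jordan} by working in a Darboux chart. Write $\dim_\C W=2n+1$ with $n\in\n$ and fix an arbitrary point $p\in W$. First I would apply Darboux's theorem for complex contact manifolds (Theorem \ref{th:Darboux-contact}) to obtain an open neighborhood $U\subset W$ of $p$ and a biholomorphism $\Phi\colon U\to V$ onto an open subset $V\subset\c^{2n+1}$ such that $\Phi$ carries $\Lscr|_U$ onto $\ker\eta$; equivalently, $\Phi$ is a contactomorphism of $(U,\Lscr|_U)$ onto $(V,\eta|_V)$. I would then pick any point $q\in V$ and take as starting datum the constant map $F\equiv q$, which is trivially a Legendrian curve $M\to\c^{2n+1}$ of class $\Ascr^1(M)$ with image contained in $V$.

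Next I would apply Theorem \ref{th:intro-Jordan} to $F$: this yields continuous injective maps $\wt F\colon M\hra\c^{2n+1}$, arbitrarily close to $F$ uniformly on $M$, whose restrictions to $\mathring M$ are complete holomorphic Legendrian embeddings into $(\c^{2n+1},\eta)$. Since $V$ is open and $F\equiv q\in V$, a sufficiently close such approximation has $\wt F(M)\subset V$, and $\wt F(M)$ is compact because $M$ is. The map claimed by the corollary would then be $G:=\Phi^{-1}\circ\wt F\colon M\to W$. Indeed $G$ is continuous and injective as a composition of injective maps; its image is relatively compact in $W$; and $G|_{\mathring M}$ is a holomorphic Legendrian embedding of $\mathring M$ into $(W,\Lscr)$, since $\Phi^{-1}$ is a contactomorphism and hence sends $\eta$-Legendrian maps to $\Lscr$-Legendrian ones.

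Finally I would verify completeness with respect to \emph{every} Riemannian metric $g$ on $W$. Pull $g$ back to the metric $h:=(\Phi^{-1})^*g$ on $V$. On the compact set $\wt F(M)\subset V$ the metric $h$ and the Euclidean metric of $\c^{2n+1}$ are uniformly comparable, so there is a constant $c>0$ with $h\ge c\,(\text{Eucl})$ along $\wt F(M)$; consequently the metric $G^*g=\wt F^*h$ induced on $\mathring M$ dominates $c$ times the Euclidean-induced metric $\wt F^*(\text{Eucl})$, which is complete by Theorem \ref{th:intro-Jordan}. Hence $G^*g$ is complete too. The construction itself is entirely soft; the only step requiring care — and thus the main obstacle — is this metric comparison, where the relative compactness of the image $\wt F(M)$ inside the Darboux chart is precisely what forces the equivalence of the two metrics and lets completeness transfer to an arbitrary ambient Riemannian metric on $W$.
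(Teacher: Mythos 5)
Your proposal is correct and follows essentially the same route as the paper: a Darboux chart, an application of Theorem \ref{th:intro-Jordan} to a Legendrian curve with image in (a relatively compact part of) the chart, and the observation that on a compact subset of the chart any Riemannian metric on $W$ is uniformly comparable to the Euclidean metric, so completeness transfers. The only cosmetic difference is that you start from a constant map and use the uniform closeness of the approximation to keep the image inside the chart, which is a legitimate instance of the same idea.
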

\begin{proof}
Let $\dim W=2n+1\ge 3$.
By Darboux's theorem (see Theorem \ref{th:Darboux-contact}), every point of $W$ has a neighborhood $U\subset W$ 
and holomorphic coordinates $\Phi=(x_1,y_1,\ldots,x_n,y_n,z)$ on $U$ such that $\Lscr|_U= \ker \Phi^*(\eta)$, where  $\eta$ is the 
standard contact form given by \eqref{eq:contact}. Let $V\Subset U$ be a relatively compact domain. Theorem \ref{th:intro-Jordan} 
provides a continuous injective map $F_0\colon M\to \Phi(V)\subset \c^{2n+1}$ whose restriction to $\mathring M$ is a holomorphic 
$\eta$-Legendrian  embedding which is complete with respect to the Euclidean metric $g_0:=|dz|^2+\sum_{j=1}^n (|dx_j|^2+|dy_j|^2)$
on $\c^{2n+1}$. As a consequence, $F:=\Phi^{-1}\circ F_0\colon M\to W$ is $\Lscr$-Legendrian. Further, since $\overline V$ is compact, 
the restriction to $V$ of any Riemannian metric $g$ on $W$ is comparable to $\Phi^*(g_0|_{\Phi(V)})$, and hence, 
$F|_{\mathring  M}$ is complete with respect to any such $g$. This completes the proof.
\end{proof}

The paper includes an Appendix in which we collect some results concerning holomorphic contact and symplectic forms and structures; 
in particular, the Darboux theorems. 
These results are well-known in the real case, but their complex analogues do not seem easily available in the literature. 
The proofs in the holomorphic case follow those for the smooth case rather closely,  and we do not claim any originality on them.

%
%   PROBLEMS
%
%

Our results open several natural new questions and possible directions of future investigation. 
Explicitly, we pose the following problems.

\begin{problem} Assume that $W$ is a complex manifold of dimension $n\ge 4$ and $\Lscr\subset TW$ is a completely nonintegrable 
holomorphic subbundle of dimension $m$ with $2\le m\le n-2$ (i.e., the repeated commutators 
of holomorphic vector fields tangent to $\Lscr$ span $TW$). Does Corollary \ref{co:Wtau} hold in this setting, i.e., 
does every bordered Riemann surface admit a bounded complete holomorphic map (immersion, embedding) 
to $W$ which is tangent to $\Lscr$?
\end{problem}

\begin{problem}
It has recently been shown by the second named author that for any $n\ge 3$ there exists a holomorphic contact structure on $\C^{2n+1}$ 
which is Kobayashi hyperbolic, and in particular is not globally contactomorphic to the standard one (see \cite{Forstneric2016}).
Are there infinitely many, or perhaps even uncountable many pairwise nonequivalent complex contact structures on $\C^3$? 
(Eliashberg showed that on $\R^3$ there exist countably many different isotopy classes of smooth contact structures \cite{Eliashberg1989IM,Eliashberg1993IMRN}.)
\end{problem}

\begin{problem}
Does the analogue of Theorem \ref{th:intro-Runge} hold for maps of bordered Riemann surfaces 
into an arbitrary Stein contact manifold $(W,\Lscr)$?
\end{problem}

An even more ambitious problem is to develop methods for constructing higher-dimensional complex Legendrian 
submanifolds in complex contact manifolds. 
(We refer to Landsberg and Manivel \cite{LandsbergManivel2007AJM} 
for examples of compact Legendrian submanifolds in the projective space $\CP^{2n+1}$ endowed with the standard contact structure.) 
One of the main questions in this direction is the following.

\begin{problem}
Let $X^n$ be an $n$-dimensional Stein manifold for some $n>1$. (Recall that $1$-dimensional Stein manifolds
are open Riemann surfaces.) Does $X$ admit a proper holomorphic contact map (immersion, embedding) into
to the standard complex contact manifold $(\C^{2n+1},\eta)$? 
\end{problem}

%%%%%%%%%%
%%%%%%%%%%
%%%%%%%%%%
%%%%%%%%%%   PRELIMINARIES
%%%%%%%%%%
%%%%%%%%%%

\section{Preliminaries} 
\label{sec:prelim}

%
% Subsec: Contact
%

\subsection{The standard holomorphic contact structure on $\c^{2n+1}$}\label{ss:eta}

Let $\eta$ denote the contact form \eqref{eq:contact} on the Euclidean space $\C^{2n+1}$ for some $n\in\N$.
Its differential
\[
	d\eta = \sum_{j=1}^n dx_j \wedge dy_j
\] 
is the standard {\em holomorphic symplectic form} on $\C^{2n}_{x_1,y_1,\ldots,x_n,y_n}$, and 
\[
	\eta\wedge(d\eta)^n=n! \, dx_1\wedge dy_1\wedge\cdots \wedge dx_n\wedge dy_n\wedge dz
\] 
is a multiple of the standard holomorphic volume form on $\C^{2n+1}$. 
Note that $(\C^{2n+1},\eta)$ is contactomorphic to the restriction o the holomorphic contact structure $\Lscr$ on the projective space 
$\CP^{2n+1}$ obtained by projectivizing the standard symplectic holomorphic structure $(\C^{2n+2},\alpha)$ given by the 
symplectic form $\alpha=\sum_{i=0}^n dx_i\wedge dy_i$. Explicitly, for every complex line $\C v\subset \C^{2n+2}$
representing a point $[v] \in \CP^{2n+1}$ we let $\Lscr_v=\{[w]\in \CP^{2n+1}: \langle \alpha,v\wedge w\rangle=0\}$.

Let us write $\frac{\di}{\di x_j}=\di_{x_j}$ and similarly for the other coordinates on $\C^{2n+1}$.
Note that $\Lscr= \ker\eta$ is a trivial bundle that is spanned at each point by the holomorphic vector fields
\begin{equation}\label{eq:spanning}
	\di_{x_j},\quad  \di_{y_j}-x_j \di_z,\quad j=1,\ldots,n.
\end{equation}
Furthermore, we have that
\begin{equation}\label{eq:transverse}
	\big\langle \eta, \di_z \big\rangle =1\quad \text{and}\quad \di_z \,\rfloor\, d\eta=0
\end{equation}
where $\rfloor$ denotes the interior product: 
\[
	\langle \di_z \,\rfloor\, d\eta,V\rangle = \langle d\eta,\di_z \wedge V \rangle
	\ \  \text{for any vector field $V$}.
\]
Hence, $\di_z$ is the {\em Reeb vector field} of the contact manifold $(\C^{2n+1},\eta)$ (cf.\ \eqref{eq:Reeb}).
On $\C^3$ with complex coordinates $x,y,z$ we have 
\[
	\eta=dz+xdy,\quad \eta\wedge d\eta = dx\wedge dy\wedge dz.
\]
The projection $(x,y,z)\mapsto (y,z)$ is called the 
{\em front projection} and $(x,y,z)\mapsto (x,y)$ is the {\em Lagrange projection}.

Note that the holomorphic distribution $\Lscr=\ker\eta\subset T\C^{2n+1}$ is completely noninvolutive.
In fact, we have that $[\di_{x_j},\di_{y_j}-x_j \di_z]=-\di_z$ and the vector fields
\eqref{eq:spanning} together with $\di_z$ clearly span $T\C^{2n+1}$.
It follows that the real and imaginary parts of these vector fields, along with their commutators,
span $T\C^{2n+1}$ over $\R$.

The following observation will be important at several points of our argumentation.

\begin{remark}\label{rem:etaprime}
For each $j\in \{1,\ldots,n\}$, the holomorphic automorphism of $\c^{2n+1}$ given by
\[
    \Phi_j(x_1,y_1,\ldots,x_n,y_n,z)=(x_1',y_1',\ldots,x_n',y_n',z'),
\]
where  $(x_j',y_j')=(x_j,-y_j)$, $(x_i',y_i')=(x_i,y_i)$ for all $i\neq j$, and $z'=z+x_j y_j$, 
is an involution satisfying 
\[
\Phi_j^*\eta= dz+y_jdx_j+\sum_{i\neq j} x_idy_i.
\]
More precisely, setting $\eta_j=dz+y_jdx_j+\sum_{i\neq j} x_idy_i$, we have $\Phi_j^*\eta =\eta_j$, and hence the contact manifolds $(\c^{2n+1},\ker \eta)$ and $(\c^{2n+1},\ker \eta_j)$ are contactomorphic and $\Phi_j$ is a contactomorphism between them. In particular, $\Phi_j$ maps $\eta$-Legendrian curves to $\eta_j$-Legendrian curves and vice versa. Thus, the role of the variables $x_j$ and $y_j$ can be interchanged in many arguments. 
\end{remark}

%
% Subsec: Riemann surfaces
%

\subsection{Riemann surfaces and mapping spaces}\label{ss:RS}

For $n\in\n$ we denote by $|\cdot|$ the Euclidean norm in $\c^n$. 
Given a topological space $L$ and a map $f\colon L\to\c^n$ we denote by 
$\|f\|_{0,L}:=\sup\{|f(u)|\colon u\in L\}$ the supremum norm of $f$.

Let $M$ be an open Riemann surface.
Given a subset $K\subset M$, we denote by $\Oscr(K)$ the algebra of all holomorphic functions on 
open neighborhoods of $K$ in $M$, where we identify any pair of functions which agree on some neighborhood of $K$. 
In particular, $\Oscr(M)$ denotes the algebra of all holomorphic functions $M\to\c$. 

If $K$ is a smoothly bounded compact domain in $M$ and $r\in\z_+=\{0,1,2,...\}$, 
we denote by $\Cscr^r(K)$ the algebra of all $r$ times continuously differentiable 
complex-valued functions on $K$ and by $\Ascr^r(K)$ the subalgebra of $\Cscr^r(M)$
consisting of all functions that are holomorphic in the interior $\mathring K=K\setminus bK$.
We denote by $||f||_{r,K}$ the standard $\Cscr^r$ norm of a function $f\in\Cscr^r(K)$, where the derivatives
are measured with respect to a Riemannian metric on $M$; the precise choice of the metric
will not be important for our purposes. We shall use the same notation for maps 
$f=(f_1,\ldots,f_n):K\to\C^n$ with $f_j\in \Cscr^r(K)$ for $j=1,\ldots,n$.

A {\em compact bordered Riemann surface} is a compact Riemann surface $M$ with nonempty boundary $\emptyset\neq bM\subset M$ 
consisting of finitely many pairwise disjoint smooth Jordan curves. The interior $\mathring M=M\setminus bM$ of such $M$ is called a 
{\em bordered Riemann surface}. It is classical that every compact bordered Riemann surface $M$ is diffeomorphic to a smoothly 
bounded compact domain in an open Riemann surface $\wh M$, and so the function spaces $\Ascr^r(M)$, $r\in\z_+$, are defined as above.

%
% Subsec: sprays
%

\subsection{Sprays of holomorphic maps}\label{ss:sprays}

We shall frequently use the notion of a {\em holomorphic spray} of maps $X\to Y$
between a pair of complex manifolds. This is simply a holomorphic map
$F\colon X\times W\to Y$, where $W$ is a connected domain in a Euclidean space $\C^N$ containing the origin.
We often consider $F_w=F(\cdotp,w)\colon X\to Y$ as a family of holomorphic maps depending
holomorphically on the parameter $w\in W$. The map $F_0=F(\cdotp,0)$ is called the {\em core} of the spray.
The spray is said to be {\em dominating} at a point $x\in X$ if the map 
$w\mapsto F(x,w)\in Y$ has maximal rank equal to $\dim Y$ at $w=0$; 
if this holds at every point of $X$ then the spray is said to be {\em dominating} (on $X$).

Sprays are a useful tool in linearization problems. In particular, in this paper we use 
{\em period dominating sprays} in order to control periods of holomorphic maps from Riemann
surfaces in approximation problems; see \eqref{eq:derivative-period} where this 
notion is first introduced. For a more complete information on holomorphic 
sprays and their applications we refer the reader to \cite{Forstneric2011book}.

%%%%%%%%%%
%%%%%%%%%%
%%%%%%%%%%
%%%%%%%%%%
%%%%%%%%%%
%%%%%%%%%%

\section{The Riemann-Hilbert method for Legendrian curves} 
\label{sec:contact}

We shall write $\D=\{\zeta\in \C: |\zeta|<1\}$ and $\T=b\D=\{\zeta\in \C: |\zeta|=1\}$.

%
%  APPROXIMATION BY LEGENDRIAN CURVES
%
\begin{lemma}\label{lem:approximate}
For every holomorphic disk $F=(x,y,z) \colon\D\to\C^{2n+1}$ the map
\begin{equation}\label{eq:tildeF}
	\wt F(\zeta) = \left(x(\zeta),y(\zeta),z(\zeta)-\int_0^\zeta F^*\eta\right),\quad \zeta\in \D
\end{equation}
is a holomorphic Legendrian disk. In particular, for every holomorphic disk $F\colon \D\to \C^{2n+1}$  
there exists a Legendrian disk $\wt F\colon\D\to\C^{2n+1}$ satisfying 
\[
	||\wt F-F||_{0,\D}\le \sup_{|\zeta|<1} \left| \int_0^\zeta F^*\eta\right |. 
\]
\end{lemma}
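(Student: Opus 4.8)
The plan is to verify the two assertions in Lemma~\ref{lem:approximate} directly, essentially by unwinding the definition of $\eta$-Legendrian. First I would write $F=(x_1,y_1,\ldots,x_n,y_n,z)\colon\D\to\C^{2n+1}$ and recall that a holomorphic disk $G=(X,Y,Z)$ is $\eta$-Legendrian precisely when $G^*\eta=0$, i.e. when
\[
	dZ+\sum_{j=1}^n X_j\,dY_j=0\quad\text{on }\D,
\]
equivalently $Z' + \sum_j X_j Y_j' \equiv 0$ where $'$ denotes $d/d\zeta$. So the whole content is to check that the map $\wt F$ defined by \eqref{eq:tildeF} satisfies this identity.

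The computation is short. Set $h(\zeta)=\int_0^\zeta F^*\eta$, which is a well-defined holomorphic function on the simply connected domain $\D$, with $h'(\zeta)=(F^*\eta)(\zeta)=z'(\zeta)+\sum_j x_j(\zeta)y_j'(\zeta)$ (here I use that pulling back $dz+\sum_j x_j\,dy_j$ by $F$ and then contracting with $\di_\zeta$ gives exactly this scalar function). The first $2n$ components of $\wt F$ coincide with those of $F$, namely $x_j,y_j$; only the last component changes, to $\wt z:=z-h$. Then
\[
	\widetilde F^*\eta = \Bigl(\wt z\,' + \sum_{j=1}^n x_j y_j'\Bigr)\,d\zeta = \bigl(z' - h' + \textstyle\sum_j x_j y_j'\bigr)\,d\zeta = 0,
\]
since $h' = z' + \sum_j x_j y_j'$. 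Hence $\wt F$ is a holomorphic Legendrian disk, proving the first claim. For the ``in particular'' part, the same $\wt F$ differs from $F$ only in the last coordinate, where the difference is $-h(\zeta)=-\int_0^\zeta F^*\eta$; therefore
\[
	\|\widetilde F - F\|_{0,\D} = \sup_{|\zeta|<1}\Bigl|\int_0^\zeta F^*\eta\Bigr|,
\]
which gives the stated estimate (in fact with equality).

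There is no real obstacle here: the only points requiring a line of care are that $h$ is genuinely single-valued on $\D$ (immediate from simple connectivity, so no period appears) and the bookkeeping that contracting $F^*\eta$ with the holomorphic vector field $\di_\zeta$ yields the function $z'+\sum_j x_j y_j'$, i.e. that $\int_0^\zeta F^*\eta$ is literally the primitive of this function. One should also note that the path of integration from $0$ to $\zeta$ is irrelevant for the same reason. If one prefers, the estimate can be phrased more loosely as ``$\le\sup$'' rather than ``$=$'', which is all that is needed for later applications, but equality holds and costs nothing.
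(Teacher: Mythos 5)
Your proof is correct and is essentially identical to the paper's own one-line verification: both compute $\wt F^*\eta = \bigl(z' - h' + \sum_j x_j y_j'\bigr)\,d\zeta$ with $h(\zeta)=\int_0^\zeta F^*\eta$ and observe it vanishes because $h'=z'+\sum_j x_j y_j'$. The remarks on single-valuedness of $h$ and the equality in the sup-norm estimate are fine but add nothing beyond the paper's argument.
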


\begin{proof}
We have 
$
	(\wt F^*\eta)(\zeta) = z'(\zeta)d\zeta  - (F^*\eta)(\zeta) + \sum_{j=1}^n x_j(\zeta)y'_j(\zeta)d\zeta=0.
$
\end{proof}

The following lemma provides approximate solutions to the Riemann-Hilbert problem for holomorphic Legendrian disks.

%
%  RIEMANN-HILBERT FOR LEGENDRIAN DISKS
%
\begin{lemma}\label{lem:RH}
Assume that $f=(x,y,z)\colon \cd\to\C^{2n+1}$ is a holomorphic Legendrian disk of class $\Ascr^1(\d)$,
and for every $u\in \T$ the map 
\[
	\cd \ni v\longmapsto F(u,v)=\bigl(X(u,v),Y(u,v),Z(u,v)\bigr) \in \C^{2n+1}
\]
is a Legendrian disk of class $\Ascr^1(\d)$ depending continuously on $u\in \T$ and such that 
$F(u,0)=f(u)$ holds for all $u\in \T$. Given numbers $\epsilon>0$ and $0<\rho_0<1$,
there exist a number $\rho'\in [\rho_0,1)$ and a holomorphic Legendrian disk $G\colon\overline{\d}\to\C^{2n+1}$ 
such that $G(0)=f(0)$ and the following conditions hold:
\begin{enumerate}
\item[\rm (i)]   $\sup\{|G(u)-f(u)| : |u|\le \rho'\} <\epsilon$,
\vspace{1mm}
\item[\rm (ii)]  $\dist(G(u),F(u,\T))<\epsilon$ for all $u\in \t$, and
\vspace{1mm}
\item[\rm (iii)] $\dist(G(\rho u),F(u,\overline{\D}))<\epsilon$ for all $u\in \T$ and all 
$\rho\in [\rho',1)$.
\end{enumerate}
If in addition $I$ is a proper closed segment in the circle $\t$
and $F(u,v)=f(u)$ for all $u\in \T\setminus I$ and $v\in\cd$, then for every 
open neighborhood $U$ of $I$ in $\overline{\d}$ we may choose $G$ as above such 
that it also satisfies the following condition:
\begin{enumerate}
\item[\rm (iv)] $G$ is $\epsilon$-close to $f$ in the $\Cscr^1$ topology on $\overline{\d}\setminus  U$.
\end{enumerate}
\end{lemma}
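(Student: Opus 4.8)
The plan is to reduce the Riemann--Hilbert problem for Legendrian disks to the classical Riemann--Hilbert problem for holomorphic disks in $\C^{N}$ (for which we may invoke the standard results, e.g.\ from \cite{AlarconDrinovecForstnericLopez2015PLMS,Forstneric2011book}), and then correct the resulting disk by the Legendrianization device of Lemma \ref{lem:approximate}. Concretely, write $f=(x,y,z)$ and $F(u,v)=(X(u,v),Y(u,v),Z(u,v))$. Since each $F(u,\cdot)$ is Legendrian, $Z(u,\cdot)$ is determined by $(X(u,\cdot),Y(u,\cdot))$ up to the additive constant $Z(u,0)=z(0)$ via integration of $-\sum_j X_j dY_j$; so the genuine degrees of freedom are carried by the $2n$ components $(X,Y)$, which take values in the symplectic $\C^{2n}$. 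First I would apply the classical Riemann--Hilbert approximation theorem to the holomorphic disk $(x,y)\colon\overline\D\to\C^{2n}$ with boundary data $v\mapsto(X(u,v),Y(u,v))$, obtaining a holomorphic disk $(\xi,\upsilon)\colon\overline\D\to\C^{2n}$, close to $(x,y)$ on a slightly smaller disk $|u|\le\rho'$, whose boundary values approximate the $(X,Y)$-boundary curves in the senses (i)--(iii). Then set $G=(\xi,\upsilon,\zeta)$ where $\zeta(u)=z(0)-\int_0^u\sum_{j=1}^n\xi_j\,d\upsilon_j$; by Lemma \ref{lem:approximate} this $G$ is a holomorphic Legendrian disk with $G(0)=f(0)$.

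The remaining work is to check that the estimates (i)--(iii) survive passage from the $(X,Y)$-data to the full $(X,Y,Z)$-data. For (i): on $|u|\le\rho'$, $(\xi,\upsilon)$ is uniformly close to $(x,y)$, hence $\sum_j\xi_j d\upsilon_j$ is uniformly close to $\sum_j x_j dy_j=-dz$ (using closeness of derivatives, which is part of what the classical theorem delivers on the smaller disk), so $\zeta$ is uniformly close to $z$ after adjusting by the common constant at $0$; shrink $\rho'$ if necessary. For (ii) and (iii): the point is that the target curve $F(u,\T)$ (resp.\ the disk $F(u,\overline\D)$) is \emph{itself Legendrian}, so its $Z$-component equals $Z(u,v)=z(0)-\int_0^v\sum_j X_j dY_j$ along a path in the parameter $v$; since the classical theorem can be arranged so that the boundary trace of $(\xi,\upsilon)$ at a point $\rho u$ is close to $(X(u,v),Y(u,v))$ for some $v$ \emph{together with} closeness of the relevant parametrized path (this is how the standard proof via ``gluing a family of analytic disks'' works), the integrals defining the $Z$-components are close, and hence $\dist(G(\cdot),F(u,\cdot))<\epsilon$ in the full $\C^{2n+1}$. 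I expect this bookkeeping — propagating $\Cscr^1$-type closeness through the path integrals so that the distance estimates hold in all $2n+1$ coordinates, not just the first $2n$ — to be the main technical obstacle, and it is the reason one must use a version of the classical Riemann--Hilbert theorem that controls the approximating disk along the whole family of ``spokes'' $v\mapsto(\text{boundary trace})$, not merely on the boundary circle.

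For the addendum (iv), suppose $I\subsetneq\T$ is a proper closed arc, $F(u,v)=f(u)$ for $u\in\T\setminus I$, and $U\supset I$ is a neighbourhood in $\overline\D$. Here the classical Riemann--Hilbert theorem with ``support in $I$'' yields $(\xi,\upsilon)$ that is $\Cscr^1$-close to $(x,y)$ on $\overline\D\setminus U$ in addition to (i)--(iii). Then, as above, $\zeta$ is $\Cscr^1$-close to $z$ on $\overline\D\setminus U$ (again using closeness of $\sum_j\xi_j d\upsilon_j$ to $\sum_j x_j dy_j$ there), so $G$ is $\Cscr^1$-close to $f$ on $\overline\D\setminus U$, giving (iv). Finally one records that $G$ is genuinely holomorphic and Legendrian of class $\Ascr^1$ on all of $\overline\D$ — immediate from the construction and Lemma \ref{lem:approximate} — and that $G(0)=f(0)$ by choice of the integration constant, completing the proof.
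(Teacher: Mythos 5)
Your outline coincides with the paper's strategy: solve the classical Riemann--Hilbert problem for the Lagrange projection and then restore the Legendrian condition by replacing the last component with $z(0)-\int_0^u\sum_j\xi_j\,d\upsilon_j$, as in Lemma \ref{lem:approximate}. (The paper realizes the classical solution concretely: it approximates the coefficients of the power series of $F(u,v)$ in $v$ by Laurent polynomials in $u$ and sets $F_N(u)=F(u,u^N)$ for large $N$; your $\zeta$ then coincides with the paper's corrected component $\wt Z_N=Z_N-\int_0^\cdot F_N^*\eta$.) The gap is in the step you defer as ``bookkeeping'': the claim that the $Z$-components come out close because ``the relevant parametrized path'' is close. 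This is not delivered by any black-boxed classical Riemann--Hilbert theorem, and a generic solution $(\xi,\upsilon)$ of the classical problem for the $(X,Y)$-data would in fact fail it: closeness of \emph{values} in the senses (i)--(iii) says nothing about the path integral $\int_0^u\sum_j\xi_j\,d\upsilon_j$. Even for the concrete solution $u\mapsto(X(u,u^N),Y(u,u^N))$, the derivative near $\T$ is of size $O(N)$ (it contains the term $NY_v(u,u^N)u^{N-1}$), so no generic $\Cscr^0$- or path-closeness argument controls the integral; the integrand $X\,dY$ a priori contributes terms of size $O(N)$ on the outer annulus.

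What saves the day --- and what your proposal omits --- is a double cancellation specific to the Legendrian structure. First, because each spoke $v\mapsto F(u,v)$ is Legendrian ($Z_v+XY_v=0$), the $O(N)$ terms in $F_N^*\eta/du=Z_u+Z_vNu^{N-1}+XY_u+XY_vNu^{N-1}$ cancel exactly, leaving $Z_u+XY_u$. Second, the constant term ($n=0$) of the expansion of $(Z_u+XY_u)(u,u^N)$ equals $c_0'+a_0b_0'=z'+xy'=0$ because the \emph{core} $f$ is Legendrian; after the Laurent-polynomial approximation of the coefficients, the surviving terms are $O(u^{N-N_1})$, so $\int_0^\zeta F_N^*\eta\to0$ uniformly on $\cd$ as $N\to\infty$. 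Only this estimate shows that the Legendrianized disk $G_N=(X_N,Y_N,\wt Z_N)$ is uniformly close to $F_N$ and hence inherits (i)--(iv) from the classical statement. Without carrying out this computation (or an equivalent one), your argument establishes (i)--(iii) only for the first $2n$ components, and the assertion that ``the integrals defining the $Z$-components are close'' is unsupported. (Two small additional slips: the $Z$-component of the spoke is $Z(u,v)=z(u)-\int_0^vX(u,\cdot)\,dY(u,\cdot)$ with base value $z(u)$, not $z(0)$; and one cannot freely ``shrink $\rho'$'' to fix (i), since decreasing $\rho'$ makes (iii) harder, whereas the correct resolution is that the uniform smallness of $\int_0^\zeta F_N^*\eta$ gives (i) on all of $\cd$ up to the already-controlled error of $F_N$.)
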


\begin{proof}
For simplicity of notation we shall consider the case $n=1$; the same proof will apply also for $n>1$.

Since we are looking for Legendrian disks $G\colon \cd\to\C^3$ satisfying certain approximate conditions
in relationship to $f$ and $F$, we may assume by approximation that all our holomorphic Legendrian disks are defined on a fixed open neighborhood of $\cd$. 
Indeed, for $f$ just use Mergelyan's approximation  for $x$ and $y$ and define $z$ in a neighborhood of $\overline \d$ accordingly; for the boundary disks proceed likewise but using the parametric version of Megelyan's theorem. 
 For $(u,v)\in \T\times \cd$ we have
\vspace{1mm}
\begin{eqnarray*} 
	X(u,v) &=&  \sum_{j\ge 0} a_j(u) v^j, \\
	Y(u,v) &=&  \sum_{k\ge 0} b_k(u) v^k, \\
         Z(u,v) &=& \sum_{n\ge 0} c_n(u) v^n,
\end{eqnarray*}
where the coefficients $a_j$, $b_k$, and $c_n$ are continuous functions of $u\in\T$ and we have
\[
	a_0(u)=x(u),\ \ b_0(u)=y(u),\ \ c_0(u)=z(u).
\]
The Legendrian condition for the map $v\mapsto F(u,v)$ (with a fixed $u\in\T$)
says that $Z_v+X Y_v=0$, where the
subscript denotes the partial derivative with respect to the indicated variable. 
From the power series expansions for $X$ and $Y$ we obtain
\[
	X Y_v = \sum_{j\ge 0} a_j v^j  \,\, \cdotp  \sum_{k\ge 1} b_k k v^{k-1} = 
	\sum_{n\ge 1} \biggl(\, \sum_{j+k=n} k a_j b_k \biggr) v^{n-1}. 
\]	 
Comparison with 
\[
	Z_v = \sum_{n\ge 1} c_n n v^{n-1}
\]
gives the equations
\[
	c_n= - \frac{1}{n}  \sum_{j+k=n} ka_j b_k,\quad n=1,2,\ldots.
\]
By approximation we may assume that there are only finitely many nonzero coefficients 
$a_j$, $b_k$ and hence $c_n$, i.e., the Legendrian curves $v\mapsto F(u,v)$ are polynomial 
in $v\in\C$ of bounded degree independent of $u\in \T$.
Furthermore, we may approximate each of the coefficients $a_j$ and $b_k$ (which are continuous functions
on $\T$) by a rational function with the only pole at $0$. In view of the
above formulas for the coefficients $c_n$ of $Z$ these also become rational functions on $\C$
with the only pole at $0$. We denote the resulting functions and maps by the same letters.
Note that this gives a family of polynomial Legendrian curves 
$F(u,\cdotp)=\left(X(u,\cdotp), Y(u,\cdotp) , Z(u,\cdotp)\right)\colon \C\to\C^3$ for $u\in \C\setminus \{0\}$
which are Laurent polynomials in the variable $u$. In particular, we have that
\begin{equation}\label{eq:v-equation}
	Z_v(u,v)+X(u,v) Y_v(v,u)=0,\quad u\in \C\setminus \{0\},\ v\in \C.
\end{equation}
Let $N_0$ be the biggest degree of pole of any of these coefficients at $0$.
For any $N\in \N$ with $N> N_0$ the map $F_N\colon\C\to\C^3$ given by
\[
	F_N(u) = (X_N(u),Y_N(u),Z_N(u)) := F(u,u^N)= \bigl(X(u,u^N),Y(u,u^N),Z(u,u^N)\bigr)
\]
is a holomorphic polynomial with $F_N(0)=f(0)$. It is well-known that for sufficiently big $N\in\N$
the map $F_N$ satisfies properties (i)--(iv) in the lemma (see e.g.\ \cite[Lemma 3.1]{DrinovecForstneric2012IUMJ}).

Although the maps $F_N$ obtained in this way need not be Legendrian, we shall now show 
that for all sufficiently big $N\in\N$ the map $F_N$ is as close as desired uniformly on $\cd$
to a Legendrian disk $G=G_N=(X_N,Y_N,\wt Z_N) \colon \cd\to \C^3$ of the form given by
Lemma \ref{lem:approximate}; this will complete the proof.
To this end we estimate the expression 
\[
	F_N^*\eta=dZ_N + X_NdY_N.
\] 
We have that
\begin{eqnarray*}
	\frac{d}{du} Z(u,u^N) &=& Z_u(u,u^N) + Z_v(u,u^N)Nu^{N-1}, \\
	X(u,u^N)\frac{d}{du} Y(u,u^N) &=&  X(u,u^N) \left(Y_u(u,u^N)+Y_v(u,u^N) N u^{N-1}\right).
\end{eqnarray*}
By adding these two equations and taking into account the condition $Z_v+XY_v=0$ for the 
Legendrian disk $v\mapsto F(u,v)$ we obtain
\[
  	%\frac{d}{du} Z - X\frac{d}{du} Y 
	F_N^*\eta/du = Z_u+Z_v Nu^{N-1} + XY_u + XY_v N u^{N-1} = Z_u  + XY_u.
\]
From the power series expansions of $F=(X,Y,Z)$ we get
\begin{eqnarray*}
	(Z_u  + XY_u)(u,u^N)  &=& \sum_{n\ge 0} c'_n(u) u^{nN} +
		\sum_{j\ge 0} a_j(u) u^{jN} \,\,\cdotp \sum_{k\ge 0} b'_k(u) u^{kN} \\
		&=& \sum_{n\ge 1} \biggl( c'_n(u) + \sum_{j+k=n}a_j(u) b'_k(u) \biggr) u^{nN}.
\end{eqnarray*}
The term with $n=0$ in the above sum equals $c'_0+a_0b'_0=z'+xy'=0$ since the
curve $f=(x,y,z)$ is Legendrian, and hence it drops out from the sum. 

Recall that each of the coefficients $a_j$, $b_k$ and $c_n$ is Laurent 
polynomial of the form $P(u,1/u)$ where $P$ is a holomorphic polynomial on $\C^2$. 
The same is then true for their derivatives, and hence 
for the coefficients $c'_n + \sum_{j+k=n}a_j b'_k$ in the above expansion of $Z_u  + XY_u$. 
Let $N_1$ be the maximal power of $1/u$ that appears in any of these finitely many coefficients.
If $N\ge N_1$ then the function $(Z_u  + XY_u)(u,u^N)$ is a polynomial in $u$ whose lowest order term is $u^{N-N_1}$
or higher. Note that $\int_0^\zeta u^{N-N_1}du = \zeta^{N-N_1+1}/(N-N_1+1)$ which converges
to zero uniformly on the disk $|\zeta|\le 1$ when $N\to\infty$. Since we have finitely
many such terms in the sum, it follows that the integral
\[
	\int_0^\zeta F_N^*\eta = \int_0^\zeta (Z_u  + XY_u)(u,u^N)\, du
\]
converges to zero uniformly on $\cd$ as $N\to+\infty$.
Furthermore, if the last assumption in the lemma holds 
then we can perform the same construction on a somewhat bigger compact simply connected domain 
$D\subset\C$ containing the disk $\cd$ such that $I\subset bD$ and $\cd\setminus U\subset\mathring D$; 
the $\Cscr^0$-estimate on $D\cong \cd$ then yields $\Cscr^1$ estimate on $\cd\setminus U$ in view of the Cauchy estimates.
Finally, setting $\wt Z_N(\zeta)= Z_N(\zeta) - \int_0^\zeta F_N^*\eta$ 
(cf.\ Lemma \ref{lem:approximate}) gives a sequence of holomorphic Legendrian maps 
$G_N=(X_N,Y_N,\wt Z_N) \colon \C\to \C^3$ satisfying the conclusion of Lemma \ref{lem:RH}.
\end{proof}

We now prove the analogous result for any bordered Riemann surface.

%
%  RIEMANN-HILBERT FOR LEGENDRIAN CURVES
%
\begin{theorem}\label{th:RH}
Assume that $M$ is a compact bordered Riemann surface, 
$I\subset bM$ is an arc which is not a boundary component of $M$,
$f=(x,y,z)\colon M \to\C^{2n+1}$ is a Legendrian map of class $\Ascr^1(M)$, 
and for every point $u\in bM$ the map
\[
	\cd \ni v\longmapsto F(u,v)=\bigl(X(u,v),Y(u,v),Z(u,v)\bigr) \in \C^{2n+1}
\]
is a Legendrian disk of class $\Ascr^1(\d)$, depending continuously on $u\in bM$,  such that 
$F(u,0)=f(u)$ for all $u\in bM$ and $F(u,v)=f(u)$ for all $u\in bM\setminus I$ and $v\in\cd$. 
Given a number $\epsilon>0$ and a neighborhood $U\subset M$ of the arc $I$, there exist a holomorphic 
Legendrian map $H \colon M \to\C^{2n+1}$ and a neighborhood $V\Subset U$ of $I$ 
with a smooth retraction $\rho\colon V\to V\cap bM$ such that the following conditions hold:
\begin{enumerate}
\item[\rm (i)]   $\sup\{|H(u)-f(u)| : u\in M\setminus V\} <\epsilon$,
\vspace{1mm}
\item[\rm (ii)]  $\dist(H(u),F(u,\T))<\epsilon$ for all $u \in bM$, and
\vspace{1mm}
\item[\rm (iii)] $\dist(H(u),F(\rho(u),\overline{\D}))<\epsilon$ for all $u\in V$.
\end{enumerate}
\end{theorem}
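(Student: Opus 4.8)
\textbf{Proof proposal for Theorem \ref{th:RH}.}

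The plan is to reduce the general bordered Riemann surface case to the disk case already settled in Lemma \ref{lem:RH}, using the standard localization technique for Riemann--Hilbert problems on Riemann surfaces (as in \cite[proof of Theorem 1.1]{AlarconDrinovecForstnericLopez2015PLMS} and \cite{DrinovecForstneric2012IUMJ}). Since $F(u,v)=f(u)$ for all $u\in bM\setminus I$, the boundary modification is concentrated near the arc $I$, which is not a whole boundary component; hence we may cover $I$ by finitely many small closed arcs $I_1,\dots,I_m$ contained in $bM$, each of which lies on the boundary of a small closed disk $D_l\subset M$ that is biholomorphic to $\overline\d$ with $I_l$ corresponding to a boundary segment. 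First I would arrange these so that $I\subset \bigcup_l \mathring I_l$ (relative interiors in $bM$) and so that each $D_l\subset U$. On each $D_l$ the hypotheses of Lemma \ref{lem:RH} (with the segment version, part (iv)) are satisfied by restricting $f$ and the boundary family $F(u,\cdot)$, after cutting off the boundary family with a continuous partition-like modification so that on $bD_l$ it equals $f$ outside $I_l$; one must take care that on the overlaps $I_l\cap I_{l+1}$ the two local problems are compatible, which is automatic because there the relevant boundary disk is still $v\mapsto F(u,v)$ from the original data.

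Next I would apply Lemma \ref{lem:RH} successively, one disk $D_l$ at a time, each time feeding the output of the previous step as the new core map $f$. At step $l$ Lemma \ref{lem:RH}(iv) guarantees that the new Legendrian map is $\Cscr^1$-close to the previous one on $M\setminus U_l$ for a small neighborhood $U_l$ of $I_l$, in particular on the disks $D_{l'}$ with $l'\ne l$ outside a tiny neighborhood of $I_l$; by choosing the successive tolerances $\epsilon_l$ small enough (a standard convergent-scheme bookkeeping) the cumulative errors on $M\setminus V$ stay below $\epsilon$, where $V$ is a suitably small neighborhood of $I$ obtained as a union of the shrunken neighborhoods produced by the lemma. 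The retraction $\rho\colon V\to V\cap bM$ is built from the local retractions (on each $D_l$, the disk $\overline\d$ retracts onto its boundary segment near $I_l$) patched together; since $I$ is an arc, not a closed curve, such a smooth retraction exists on a neighborhood. Properties (i)--(iii) of Theorem \ref{th:RH} then follow by combining the local estimates (i)--(iii) of Lemma \ref{lem:RH} over the finite cover: (i) from the $\Cscr^0$ (indeed $\Cscr^1$) closeness off $V$, (ii) from Lemma \ref{lem:RH}(ii) on each $D_l$ together with the fact that near each boundary point $u$ the local boundary disk is $F(u,\cdot)$, and (iii) from Lemma \ref{lem:RH}(iii) after identifying $\rho$ with the local retractions. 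Throughout, the Legendrian condition is preserved because each application of Lemma \ref{lem:RH} outputs a genuine holomorphic Legendrian map on all of $M$ (the construction in that lemma, via $\wt Z$, globalizes trivially since $M$ has a well-defined primitive for an exact form pulled back from a Legendrian map).

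The main obstacle I anticipate is the patching of the local solutions into a single globally defined holomorphic Legendrian map on $M$ while keeping exact control of the errors. Unlike the minimal-surface or null-curve settings, here we cannot simply add local corrections, because the Legendrian condition $F^*\eta=0$ couples the $z$-component to the others nonlinearly; instead one must run Lemma \ref{lem:RH} \emph{sequentially} rather than in parallel, so that at each stage one is solving a genuine one-disk Riemann--Hilbert problem for a genuine Legendrian core, and the only thing to check is that the part-(iv) $\Cscr^1$-approximation off a small neighborhood of the current arc does not destroy the gains of earlier steps. This is handled by the usual trick of making the neighborhoods $U_l$ and tolerances $\epsilon_l$ shrink fast enough; it is routine but requires care because the boundary family $F(u,\cdot)$ must be re-localized at each stage so that its restriction to $bD_l$ still agrees with the current core outside $I_l$ — this re-localization is where the hypothesis that $I$ is not a boundary component is essential, since it lets us choose the $D_l$ with $I_l\subsetneq bD_l$.
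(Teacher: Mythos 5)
Your reduction to Lemma \ref{lem:RH} is the right starting point, but there is a genuine gap at the heart of the argument: a single application of Lemma \ref{lem:RH} on a disk $D_l\subset M$ produces a Legendrian map defined only on $D_l$, not on $M$, and your proposal never explains how to turn it into a global holomorphic Legendrian map on $M$. Your parenthetical claim that the construction ``globalizes trivially since $M$ has a well-defined primitive for an exact form pulled back from a Legendrian map'' is exactly where the difficulty is hidden. After you replace $f$ on $D_l$ by the local solution $G$ and attempt to patch it with $f$ on $M\setminus D_l$, the resulting $(x,y)$-components are only approximately equal on the overlap; any holomorphic correction (which already requires solving a $\dibar$-problem with $\Cscr^1$ estimates on a Cartan pair $M=M'\cup D$, not just a cut-off) changes the $1$-form $\sum_j x_j\,dy_j$, and on a bordered surface with $H_1(M;\Z)=\Z^\ell\neq 0$ there is no reason for this form to remain exact. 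Without exactness the $z$-component cannot be defined by integration, and the Legendrian condition is lost. This period obstruction is the essential new difficulty compared with the disk case, and it is not removed by running the disks sequentially.

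The paper's proof resolves both issues at once: it uses a single simply connected disk $D\subset U$ containing $I$ (possible precisely because $I$ is an arc and not a boundary component, so no covering by several disks is needed), writes $M=M'\cup D$ as a Cartan pair, and embeds the entire construction into a period-dominating holomorphic spray $\wt x(\cdot,\zeta)$, $\zeta\in\C^\ell$, whose $\zeta$-derivative of the period map $\Pcal$ at $\zeta=0$ is an isomorphism. Lemma \ref{lem:RH} is then applied on $D$ with $\zeta$ as a holomorphic parameter, the local solutions are glued with the global data by solving a Cousin-I problem with $\Cscr^1$ bounds on $(M',D)$, and finally a parameter value $\zeta'$ is chosen so that $\Pcal\bigl(H_1(\cdot,\zeta'),H_2(\cdot,\zeta')\bigr)=0$, which makes $H_1\,dH_2$ exact and allows the last component $H_3$ to be defined by integration over paths in $M$. (One must also treat separately the degenerate cases where $y$, or both $x$ and $y$, are constant.) Your outline would need to incorporate both the bounded gluing step and this period-correction mechanism to become a proof.
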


\begin{proof}
For simplicity of notation we consider the case $n=1$; the same proof applies in general by considering $x$ and $y$ as 
vector-valued functions and writing $xdy=\sum_{i=1}^n x_i\, dy_i$. 

We may assume that $M$ is connected. 
Choose a closed smoothly bounded simply connected domain $D\subset U$ (a disk) 
such that $D$ is a neighborhood of the arc $I$. By denting $bM$ slightly inward along a neighborhood of 
$I$ we can find a smoothly bounded compact domain $M' \subset M$ 
such that $M=M'\cup D$ and the following separation condition holds:
\begin{equation}\label{eq:separation}
	\overline{M'\setminus D} \, \cap \, \overline {D\setminus M'}=\emptyset.
\end{equation}
Thus, $(M', D)$ is a {\em Cartan pair} (cf.\ \cite[Definition 5.7.1]{Forstneric2011book}).

Let $C_1,\ldots,C_\ell\subset \mathring M'$ be closed curves forming a basis of the homology group
$H_1(M';\Z)\cong H_1(M;\Z)=\Z^\ell$ such that the union $\bigcup_{j=1}^\ell C_j$ is Runge in $M$. 
Consider the {\em period map} 
\[
	\Pcal=(\Pcal_1,\ldots,\Pcal_\ell)  : \Ascr^1(M)^2\to\C^\ell
\]
whose $j$-th component equals
\begin{equation}\label{eq:periodmap}
	\Pcal_j(x,y)=\int_{C_j} x\, dy,\qquad x,y \in \Ascr^1(M).
\end{equation}
Note that $\Pcal(x,y)=0$ if and only if the 1-form $xdy$ is exact, and this holds if and only if
$(x,y)$ is the Lagrange projection of a Legendrian curve $f=(x,y,z)\colon M\to \C^3$. 

We shall first assume that the second component $y$ of $f$ 
is not constant; then $y|_{C_j}$ is not constant for any $j=1,\ldots,\ell$ by the identity principle.
By the Runge property of $\bigcup_{j=1}^\ell C_j$ there exist holomorphic functions $g_1,\ldots,g_\ell$
on $M$ such that for every $j,k=1,\ldots,\ell$ the number $\int_{C_j} g_k\, dy\approx \delta_{j,k}$ is 
close to $1$ if $j=k$ and to $0$ if $j\ne k$. (Here, $\delta_{j,k}$ is the Kronecker symbol.
We first construct smooth functions $g_k$ on $\bigcup_{j=1}^\ell C_j$ such that
$\int_{C_j} g_k\, dy =\delta_{j,k}$ and then use Mergelyan's theorem to approximate them 
by holomorphic functions on $M$.) Let $\zeta=(\zeta_1,\ldots,\zeta_\ell)\in\C^\ell$. 
Consider the function $\wt x\colon M\times \C^\ell\to\C$ given by 
\begin{equation}\label{eq:X-spray}
	\wt x(u,\zeta) = x(u) + \sum_{k=1}^\ell \zeta_k\, g_k(u),\quad u\in M,\ \zeta\in\C^\ell. 
\end{equation}
Note that for all $j,k\in\{1,\ldots,\ell\}$ we have 
\begin{equation}\label{eq:dizetak}
	\frac{\di}{\di\zeta_k}\bigg|_{\zeta=0} \int_{C_j} \wt x(\cdotp,\zeta)\, dy = \int_{C_j} g_k\, dy \, \approx\, \delta_{j,k}.
\end{equation}
If the above approximations are close enough then
\begin{equation}\label{eq:derivative-period}
	\frac{\di}{\di\zeta}\bigg|_{\zeta=0} \Pcal(\wt x(\cdotp,\zeta),y)  : \C^\ell \lra \C^\ell
	\ \ \text{is an isomorphism}.
\end{equation}
If this holds, then the map \eqref{eq:X-spray} is called a {\em period dominating holomorphic spray} with the core $\wt x(\cdotp,0)=x$.

Assume now that $(x,y)\in \Ascr^1(M)^2$ is the Lagrange projection of the given 
Legendrian map $f=(x,y,z)\colon M\to \C^3$; hence $\Pcal(x,y)=0$. By the inverse function theorem
there is a ball $r\B\subset \C^\ell$ around the origin such that the map 
$r\B\ni \zeta \mapsto \Pcal(\wt x(\cdotp,\zeta),y)\in \C^\ell$
is biholomorphic onto its image (a neighborhood of $0\in\C^\ell$).
Fix a point $u_0\in D$ and consider the function $\wt z \colon D\times \C^\ell\to \C$ 
of class $\Ascr^1(D \times \C^\ell)$ given by
\[
	\wt z(u,\zeta) = z(u_0) - \int_{u_0}^u \wt x(\cdotp,\zeta)\, dy,\quad u\in D,\ \zeta\in \C^\ell.
\]
Recall that $z$ is the third component of the Legendrian map $f=(x,y,z)\colon M\to\C^3$
and the integral is taken over any path in the disk $D$. Note that $\wt z(\cdotp,0)=z|_D$
since $\wt x(\cdotp,0)=x$ and $dz=-xdy$. Let $\wt f\colon D\times \C^\ell\to\C^3$ be the family of Legendrian disks
\begin{equation}\label{eq:tildef}
	D \ni u \mapsto \wt f(u,\zeta)=\bigl( \wt x(u,\zeta),y(u),\wt z(u,\zeta)\bigr) \in \C^3
\end{equation}
depending holomorphically on $\zeta\in\C^\ell$. (The second component $y$ is independent of
the parameter $\zeta$.) Note that $\wt f(u,0)=f(u)$ for $u\in D$.

For each point $u \in bD\cap bM$ and for every $\zeta\in\C^\ell$ we let
\[
	\cd  \ni v\mapsto \wt F(u,v,\zeta)= \bigl(\wt X(u,v,\zeta),\wt Y(u,v,\zeta),\wt Z(u,v,\zeta)\bigr) \in \C^{3}
\]
be the Legendrian disk of class $\Ascr^1(\cd)$ given by
\begin{eqnarray*}
	\wt X(u,v,\zeta) &=& X(u,v)+\wt x(u,\zeta)-x(u), \\
	\wt Y(u,v,\zeta) &=& Y(u,v), \\
	\wt Z(u,v,\zeta) &=& \wt z(u,\zeta) - \int_{t=0}^{t=v} \wt X(u,t,\zeta)\, d\wt Y(u,t). 
\end{eqnarray*}
When $\zeta=0$, we have $\wt X(u,v,0)=X(u,v)$, $\wt Y(u,v,0) = Y(u,v)$ and hence
\[
	\wt Z(u,v,0) = z(u)-\int_0^v X(u,\cdotp) dY(u,\cdotp)=Z(u,v), 
\]
so we see that $\wt F(u,v,0)=F(u,v)$ is the given Legendrian disk in the theorem. Furthermore, 
setting $v=0$ we have
\[
	\wt F(u,0,\zeta)=\wt f(u,\zeta),\quad u\in bD\cap bM,\ \zeta\in\C^\ell.
\]
Finally, for every point $u\in bD\cap bM\setminus I$ and for all $\zeta\in\C^\ell$ we have 
\[
	\wt F(u,v,\zeta)=\wt F(u,0,\zeta)=\wt f(u,\zeta),\quad v\in\cd,
\] 
so $\wt F(u,\cdotp,\zeta)$ is the constant disk. We extend $\wt F$ to all points $u\in bD$ by setting 
\[
	\text{$\wt F(u,v,\zeta)=\wt f(u,\zeta)$ for all $u\in bD\setminus I$, $v\in \cd$  and $\zeta\in\C^\ell$.}
\]
Note that $\wt f(\cdotp,\zeta)\colon D\to \C^3$ and $\wt F(u,\cdotp,\zeta)\colon \cd \to\C^3$ are families of holomorphic Legendrian maps, 
depending continuously on $u\in bD$ (this only pertains to $\wt F$) and holomorphically on $\zeta\in\C^\ell$, 
which satisfy the assumptions of Lemma \ref{lem:RH} on the disk $D\cong \cd$. 
Hence there is a family of Legendrian disks $\wt G(\cdotp,\zeta)\colon D\to\C^3$ 
satisfying the approximation conditions in Lemma \ref{lem:RH} with respect to the central Legendrian disk $\wt f(\cdotp,\zeta)$ 
and the family of Legendrian disks $\wt F(u,\cdotp,\zeta)$ over the boundary point $u\in bD$.
It is easily seen from the proof of Lemma \ref{lem:RH} that the family $\wt G(\cdotp,\zeta)$ may be chosen to
depend holomorphically on $\zeta\in \C^\ell$, and the estimates in the lemma can be made uniform
for all points $\zeta$ in any given bounded domain in $\C^\ell$, in particular, on the ball $r\B\subset\C^\ell$.

Let $V\subset D\setminus M'$ be a small neighborhood of the arc $I\subset bD$.
By condition (iv) in Lemma  \ref{lem:RH} we may assume that $\wt G(\cdotp,\zeta)$ is as close as desired to 
$\wt f(\cdotp,\zeta)$ in the $\Cscr^1$ norm on the set $D\setminus V$, and hence on $M'\cap D\subset D\setminus V$. 
In particular, given $\delta >0$ we may assume that
\[
	||\wt G(\cdotp,\zeta)-\wt f(\cdotp,\zeta)||_{1,M'\cap D} <\delta, \quad \zeta\in r\B.
\]
We shall write $\wt G=(\wt G_1,\wt G_2,\wt G_3)$. 

Recall that the first component of $\wt f$ (see \eqref{eq:tildef}) is the function $\wt x$ (cf.\ \eqref{eq:X-spray}) 
which is defined on all of $M$. By solving a Cousin-I problem with bounds on the Cartan pair $(M',D)$ we glue 
$\wt x$ and $\wt G_1$ and obtain a function $H_1 (\cdotp,\zeta)\colon M\to \C$ of class $\Ascr^1(M)$, 
holomorphic in $\zeta$, such that for all $\zeta\in r\B$ we have
\[
	||H_1(\cdotp,\zeta)-\wt x(\cdotp,\zeta)||_{1,M'} < C \delta,\quad  
	||H_1(\cdotp,\zeta)-\wt G_1(\cdotp,\zeta)||_{1,D} < C \delta,
\]
where the constant $C$ only depends on the Cartan pair $(M',D)$. 
This is accomplished by first patching the two functions
smoothly, using the separation condition \eqref{eq:separation}, and then correcting the $\Cscr^1$-small error by 
solving the $\dibar$-equation with $\Cscr^1$-estimates on the bordered Riemann surface $M$. The variable
$\zeta$ is treated as a parameter. This is standard: see e.g.\ 
\cite[proof of Lemma 8.5.2]{Forstneric2011book} and use $\Cscr^1$ estimates instead of $\Cscr^0$ estimates.

Likewise, we can glue the second component $\wt y=y\in \Ascr^1(M)$  of $\wt f$
with the function $\wt G_2(\cdotp,\zeta)$ into a function $H_2(\cdotp,\zeta)\colon M\to \C$ of class $\Ascr^1(M)$, 
holomorphic in $\zeta$, such that for all $\zeta\in r\B$ we have the estimates
\[
	||H_2(\cdotp,\zeta)- y||_{1,M'} < C  \delta,\quad  
	||H_2(\cdotp,\zeta)-\wt G_2(\cdotp,\zeta)||_{1,D} < C  \delta.
\]

From the above estimates on $M'$ and the fact that  $\bigcup_{j=1}^\ell C_j\subset M'$ 
it follows that the period map $\zeta \mapsto \Pcal(H_1(\cdotp,\zeta), H_2(\cdotp,\zeta))$ 
(cf.\ \eqref{eq:periodmap}) approximates the biholomorphic 
period map $\zeta\mapsto \Pcal(\wt x(\cdotp,\zeta), \wt y(\cdotp,\zeta))$
uniformly on the ball $\zeta\in r\B$.  Assuming that $\delta>0$ is chosen small enough, 
it follows that there is a point $\zeta'\in r\B$ near the origin such that 
\begin{equation}\label{eq:periodzero}
	\Pcal\bigl(H_1(\cdotp,\zeta'), H_2(\cdotp,\zeta')\bigr)=0.
\end{equation}
For this value of $\zeta'$ we obtain a Legendrian curve 
\[
	H=\bigl(H_1(\cdotp,\zeta'),H_2(\cdotp,\zeta'),H_3) : M\to \C^3
\]
whose third component equals
\[
	H_3(u) = z(u_0) - \int_{u_0}^u H_1(\cdotp,\zeta')\, dH_2(\cdotp,\zeta'),\quad u\in M.
\]
The integral is independent of the choice of the path in $M$ since by \eqref{eq:periodzero} all the periods 
over closed curves in $M$ vanish. (Note however that we do not get a Legendrian curve for 
parameter values $\zeta\ne \zeta'$ since the period condition \eqref{eq:periodzero} fails.) 
It follows from the construction that $H$ satisfies the conclusion of 
Theorem \ref{th:RH} provided that the approximations made in the proof were close enough.
This completes the proof under the assumption that the second component 
$y$ of $f$ is nonconstant.

Assume now that $y=y_0$ is constant. If the first component $x$ is nonconstant, we consider a  spray of
the form \eqref{eq:X-spray} over the second component:
\[
	\wt y(p,\zeta) = y_0 + \sum_{k=1}^\ell \zeta_k\, g_k(p),\quad p\in M,\ \zeta=(\zeta_1,\ldots,\zeta_\ell) \in\C^\ell,
\]
where the functions $g_1,\ldots, g_\ell\in \Oscr(M)$ are chosen such that 
\[
	\int_{C_j} x \, dg_k = - \int_{C_j} g_k \,  dx \, \approx\,  \delta_{j,k}. 
\]
This ensures that the period map $\zeta\mapsto \Pcal(x,\wt y(\cdotp,\zeta))$ has maximal rank at $\zeta=0$,
so  we can proceed as before, keeping the component $x$ fixed during the proof. 

Finally, if $x=x_0$ and $y=y_0$ are both constant, then any perturbation of either $x$ or $y$ integrates 
to a Legendrian curve which  brings us back to the second case considered above. 
Of course we must also adjust the Legendrian disks $F(u,\cdotp)$ $(u\in bM)$ accordingly so that the 
condition $F(u,0)=f(u)$ is satisfied.
\end{proof}

%%%%%%%%%%
%%%%%%%%%%
%%%%%%%%%%  MERGELYAN APPROXIMATION
%%%%%%%%%%
%%%%%%%%%%
%%%%%%%%%%

\section{Mergelyan approximation by embedded Legendrian curves} \label{sec:Mergelyan}

In this section we prove an approximation result of Runge-Mergelyan type for Legendrian curves by holomorphic 
Legendrian embeddings; see Lemma \ref{lem:position} below. 
This is an important step in the proof of Theorem \ref{th:proper} given in the following section.

Recall that a compact set $K$ in a complex manifold $M$ is said to be 
{\em $\Oscr(M)$-convex}, or {\em holomorphically convex}, or {\em Runge} in $M$, if for every point
$p\in M\setminus K$ there exists  $f\in \Oscr(M)$ with $|f(p)|>\max_K |f|$.  
If $M$ is an open Riemann surface, then a compact subset $K\subset M$ is Runge if and only if 
$M\setminus K$ has no relatively compact connected components in $M$.

\begin{definition}
\label{def:admissible}
A compact subset $S$ of an open Riemann surface $M$ is said to be {\em admissible} if $S=K\cup \Gamma$,
where $K=\bigcup \overline D_j$ is a union of finitely many pairwise disjoint, compact, smoothly bounded domains $\overline D_j$ in $M$ and $\Gamma=\bigcup \Gamma_i$ is a union of finitely many pairwise disjoint smooth arcs or closed curves that intersect $K$ only in their endpoints (or not at all), and such that their intersections with the boundary $bK$ are transverse. 
Note that $\mathring S=\mathring K$.
\end{definition}

Given an admissible set $S=K\cup\Gamma\subset M$, we shall use the notation 
\begin{equation}\label{eq:ArS}
		\Ascr^r(S)=\{f\in \Cscr^r(S) : f|_{\mathring K}\in \Oscr(\mathring K)\},\quad  r\in\z_+.
\end{equation}
The natural topology on $\Ascr^r(S)\subset \Cscr^r(S)$ coincides with the $\Cscr^r(K)$ topology on the subset $K$, while on each of 
the arcs $\Gamma_i\subset \Gamma$ we use the $\Cscr^r$-norm of the function measured with respect to a fixed regular
parametrization of $\Gamma_i$.
Note that an admissible set $S$ is Runge in $M$ if and only if the inclusion map $S\hra M$ induces an injective homomorphism 
$H_1(S;\z)\hra H_1(M;\z)$ of the first homology groups. If that is the case, the classical Mergelyan approximation theorem 
(see \cite{Mergelyan1951DAN}) ensures that every function $f\in \Ascr^r(S)$ $(r\in\z_+)$ 
can be approximated in the $\Cscr^r(S)$-topology by functions holomorphic on $M$. 
When there is no place for ambiguity, we will simply write $\Ascr^r(S)$ for $\Ascr^r(S)^n$, $n\in\n$.
 
Let $f\colon S\to\c$ be a function of class $\Ascr^1(S)$. Fix a holomorphic $1$-form $\theta$ vanishing nowhere on  $M$ and 
consider the continuous map $\wh f\colon S\to\c$ given by
\begin{itemize}
\item $\wh f=df/\theta$ on $\mathring K$; 
\vspace{1mm}
\item $\wh f(\alpha(t))=(f\circ\alpha)'(t)/\theta(\alpha(t),\dot\alpha(t))$
for any smooth regular path $\alpha$ in $M$ parametrizing a connected component $\Gamma_i$ of $\Gamma$.
\end{itemize}
Clearly, $\wh f$ is a well-defined map of class $\Ascr^0(S)$. By definition, we set
\begin{equation}\label{eq:df}
     df:=\wh f\theta,\quad f\in\Ascr^1(S).
\end{equation}
Obviously, $\wh f$ depends on the choice of $\theta$ but $df$ does not. If $f\in\Oscr(S)$ then $df$ \eqref{eq:df} agrees with 
the restriction of the exterior differential of $f$ to the points of $S$, i.e. $d(f|_S)=(df)|_S$.
Conversely, every  pair $(\theta,\wh f)$, where $\theta$ is a holomorphic $1$-form vanishing nowhere on  $M$ and $\wh f\colon S\to\c$ 
is a function of class $\Ascr^0(S)$ such that $\int_\gamma \wh f\theta=0$ for all closed curves $\gamma\subset S$, determines a 
function $f\colon S\to\c$ of class $\Ascr^1(S)$, with $df=\wh f\theta$, by the formula
\[
     f(p)=\int^p \wh f\theta,\quad p\in S.
\]

\begin{definition}
\label{def:generalized}
Let $S=K\cup \Gamma$ be a compact admissible set in an open Riemann surface $M$.
A map $f=(x_1,y_1,\ldots,x_n,y_n,z)\colon S\to\c^{2n+1}$  of class $\Ascr^1(S)$ 
is a {\em generalized Legendrian curve} if 
$f^*\eta=0$, where $\eta$ is the standard contact form \eqref{eq:contact}; that is to say, if
\[
       dz+\sum_{j=1}^n x_jdy_j=0\quad \text{everywhere on $S$}. 
\]
\end{definition}

As a preliminary step in the proof of Lemma \ref{lem:position}, we show the following approximation result which 
will be very useful in subsequent applications.

%
% Lemma: Fixed components
%
\begin{lemma}\label{lem:fixedcomponents}
Let $S=K\cup \Gamma$ be an admissible subset in an open connected Riemann surface $R$ (see Definition \ref{def:admissible}) such that 
$S$ is a deformation retract of $R$, and let $f=(x_1,y_1,\ldots,x_n,y_n,z)\colon S\to \C^{2n+1}$ be a generalized Legendrian curve 
(see Definition \ref{def:generalized}). Then $f$ may be approximated in the $\Cscr^1(S)$-topology 
by holomorphic Legendrian curves $\wt f\colon R\to \c^{2n+1}$ such that $\wt f$ has no constant component function.

Furthermore, assume that for some $\sigma \in\{1,\ldots,2n+1\}$ the following hold.
\begin{enumerate}[\rm (i)]
\item The $\sigma$-th component of $f$ is nonconstant and holomorphic on $R$.
\vspace{1mm}
\item If $\sigma=2n+1$ then there is $i\in\{1,\ldots,n\}$ such that $x_i$ and $y_i$ are not constant on any component of $K$, 
$x_i$ has no zeros in $\Gamma$, and $y_i$ has no critical points in $\Gamma$.
\end{enumerate} 
Then the approximating Legendrian curves $\wt f\colon R\to\c^{2n+1}$ can be chosen such that the $\sigma$-th component 
of $\wt f$ agrees with the  $\sigma$-th component of $f$.
\end{lemma}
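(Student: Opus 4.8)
The plan is to reduce the problem to a Mergelyan-type approximation for the ``Lagrange data'' $(x_1,y_1,\ldots,x_n,y_n)$ together with a period-vanishing condition that guarantees the $z$-component integrates to a single-valued function. Recall that a generalized Legendrian curve $f=(x_1,y_1,\ldots,x_n,y_n,z)$ on $S=K\cup\Gamma$ satisfies $dz=-\sum_{j=1}^n x_j\,dy_j$ on $S$, so the data is equivalent to a tuple $(x_j,y_j)_{j=1}^n$ of $\Ascr^1(S)$ functions and a base value $z(p_0)$ such that $\sum_j\int_\gamma x_j\,dy_j=0$ for every closed curve $\gamma\subset S$; the $z$-component is then recovered by integration. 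Since $S$ is a deformation retract of $R$, the homology $H_1(S;\Z)=H_1(R;\Z)\cong\Z^\ell$ is generated by finitely many closed curves $C_1,\ldots,C_\ell\subset\mathring K$, and these can be taken Runge in $R$. So the whole statement becomes: approximate the $x_j,y_j$ by holomorphic functions on $R$ while correcting the finitely many periods $\Pcal_k=\sum_j\int_{C_k} x_j\,dy_j$ back to zero, and keeping the $\sigma$-th component fixed when required.

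First I would apply the classical Mergelyan theorem on the admissible set $S$ (valid since $S$ is Runge in $R$) to get holomorphic approximants on $R$ of all component functions except the $\sigma$-th, which I keep equal to the given (already holomorphic, nonconstant) component. This destroys the period condition by a small amount. To repair it I build a \emph{period-dominating spray} exactly as in the proof of Theorem~\ref{th:RH}: I attach a multiplicative parameter $\zeta\in\C^\ell$ to one of the free components using Runge functions $g_1,\ldots,g_\ell\in\Oscr(R)$ chosen so that the derivative of the period map at $\zeta=0$ is (close to) the identity, then invoke the implicit function theorem to find a nearby $\zeta'$ with all periods zero. The choice of which component to vary depends on the case analysis, mirroring the trichotomy in the proof of Theorem~\ref{th:RH}: (a) if some $y_i$ is nonconstant on every relevant piece one varies the corresponding $x_i$ via $\wt x_i(\cdot,\zeta)=x_i+\sum_k\zeta_k g_k$, arranging $\int_{C_k} g_m\,dy_i\approx\delta_{km}$; (b) if instead some $x_i$ is nonconstant one varies $y_i$ and uses $\int_{C_k} x_i\,dg_m\approx\delta_{km}$; (c) if all $x_i,y_i$ are constant one first perturbs one pair slightly to enter case (a) or (b). The nondegeneracy hypotheses in (i)--(ii) are precisely what is needed to guarantee that a spray varying some component \emph{other than the $\sigma$-th} is available: when $\sigma\le 2n$ one can always vary the partner coordinate in the same symplectic pair (or, using Remark~\ref{rem:etaprime}, interchange the roles of $x_j$ and $y_j$), and when $\sigma=2n+1$ hypothesis (ii) supplies an index $i$ with $x_i,y_i$ nonconstant on all components of $K$ and with no zeros/critical points of $x_i,y_i$ on $\Gamma$, so that the pair $(x_i,y_i)$ can carry the spray while $z$ is left to be reconstructed by integration.

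Having fixed the periods, the last requirement is that the final Legendrian curve have \emph{no} constant component function. This is easily arranged by a further small generic perturbation: the set of tuples with some constant component is of infinite codimension (a single nonconstant holomorphic function on a neighborhood cannot be uniformly approximated by constants), so after the period-correction step one perturbs each component that happens to still be constant by adding a tiny nonconstant holomorphic function, then re-runs the period-domination step once more to restore exactness; since being nonconstant is an open condition this is stable. If the $\sigma$-th component must stay fixed, it is nonconstant by hypothesis~(i), so it is unaffected, and one only needs to perturb the remaining components.

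The main obstacle I anticipate is organizing the case analysis so that the period-dominating spray can always be built \emph{without touching the $\sigma$-th component}, and checking that the nondegeneracy conditions (i)--(ii) really cover all cases, in particular the delicate case $\sigma=2n+1$ where $z$ is determined by the others and one must find a usable pair $(x_i,y_i)$ inside the Lagrange data; the conditions ``$x_i$ has no zeros in $\Gamma$'' and ``$y_i$ has no critical points in $\Gamma$'' are there to ensure that the arc contributions behave well and that one can construct the auxiliary functions $g_m$ with the required period pairing on all of $S$, not just on $K$. The remaining ingredients --- Mergelyan on admissible sets, existence of period-dominating sprays, the implicit function theorem --- are by now standard in this circle of techniques, and the integration of $dz=-\sum x_j\,dy_j$ over the simply connected complement of the cycles $C_k$ in $R$ presents no difficulty once the periods vanish.
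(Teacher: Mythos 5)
Your overall strategy for the first assertion and for the cases $\sigma\le 2n$ is the same as the paper's: reduce to the Lagrange data, approximate by Mergelyan on the Runge admissible set, and restore the finitely many periods $\int_{C_k}\sum_j x_j\,dy_j$ with an additive period-dominating spray before integrating to recover $z$. That part is fine.

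There is, however, a genuine gap in the case $\sigma=2n+1$, which is precisely the delicate one. You write that hypothesis (ii) supplies a pair $(x_i,y_i)$ that ``can carry the spray while $z$ is left to be reconstructed by integration.'' But when $\sigma=2n+1$ the component $z$ is the one that must be \emph{kept fixed}; it cannot be reconstructed by integrating $-\sum_j x_j'\,dy_j'$, since that integral will in general produce a different function from the prescribed $z$. The correct mechanism is to treat one of the \emph{other} components as the dependent variable: after fixing $z$ one must solve $dy_1'=-\frac{1}{x_1'}\bigl(dz+\sum_{j\ge 2}x_j'\,dy_j'\bigr)$, and this changes the problem in two essential ways that your proposal does not address. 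First, the periods that must be killed are those of the $1$-form $\frac{1}{x_1}\bigl(dz+\sum_{j\ge2}x_j\,dy_j\bigr)$, not of $\sum_j x_j\,dy_j$, and an additive spray on $x_1$ would not preserve holomorphy of this quotient; this is why the paper uses the multiplicative spray $\wt x_1(\cdot,\zeta)=x_1\,e^{\sum_l\zeta_l g_l}$, whose $\zeta$-derivative of the period map reduces to $\int_{C_k}g_l\,dy_1$. Second, one must arrange by Mergelyan approximation with jet interpolation that the zeros of $x_1'$ coincide (with multiplicity) with the zeros of $dz+\sum_{j\ge2}x_j'\,dy_j'$ on $K$ and that neither vanishes on $\Gamma$ — this is exactly what the hypotheses ``$x_i$ has no zeros in $\Gamma$'' and ``$y_i$ has no critical points in $\Gamma$'' are for (since on $S$ one has $dz+\sum_{j\ge2}x_j\,dy_j=-x_1\,dy_1$), and without this zero-matching the reconstructed $y_1'$ acquires poles. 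Your proposal interprets these hypotheses only as ensuring ``the arc contributions behave well,'' which misses their actual role. As written, your argument for $\sigma=2n+1$ would not produce a Legendrian curve whose last component equals the given $z$.
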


\begin{proof} 
Since $S$ is a deformation retract of $R$, we have that $R$ is of finite topological type and
$S$ is connected and Runge in $R$. Without loss of generality, we can assume that $K\neq \emptyset$. Indeed, otherwise $S=\Gamma$ consists of a single closed curve or Jordan arc. Then we choose a small smoothly bounded close disk $K$ in $R$ such that $S'=K\cup \Gamma$ is admissible and connected, and $K\cap S$ is a single Jordan arc. Approximating $f$ by a generalized Legendrian curve $S'\to \c^{2n+1}$ reduces the proof to the case when $K\neq \emptyset$.

Assume as we may that $K\neq \emptyset$. It follows that every component of $\Gamma$ (hence of $S$) meets $K$.  Let $C_1,\ldots,C_\ell\subset S$ be closed curves forming a basis of the homology group $H_1(S;\Z)=\Z^\ell$ $(\ell\in\z_+)$ such that the union $\bigcup_{k=1}^\ell C_k$ is Runge  in $R$. By our assumptions on $S$, we may ensure that each curve $C_k$, $k=1,\ldots,\ell$, contains a subarc $\wt C_k$ lying in $\mathring K$.

The argument at the end of the proof of Theorem \ref{th:RH} and Remark \ref{rem:etaprime} enable us to assume without loss of generality that $y_1$ 
is not constant on any component of $K$. Let 
\[
	\Pcal=(\Pcal_1,\ldots,\Pcal_\ell)  : \Ascr^1(S)^{2n}\to\C^\ell 
\] 
be the period map \eqref{eq:periodmap} with the components 
\[
    \Pcal_k(g_1,h_1,\ldots,g_n,h_n)=\int_{C_k} \sum_{j=1}^n g_j\,dh_j,\quad k=1,\ldots,\ell. 
\]
Let us construct a spray $\wt x_1(\cdotp,\zeta)\colon S\to \C$ of the form
\begin{equation}\label{eq:X-spray-comp}
	\wt x_1(u,\zeta) = x_1(u) + \sum_{k=1}^\ell \zeta_k\, g_k(u),\quad u\in S,\ \zeta\in\C^\ell, 
\end{equation}
(cf.\ \eqref{eq:X-spray}) with the core $\wt x_1(\cdotp,0)=x_1$ (the first component of $f$), depending holomorphically on $\zeta\in \C^\ell$,
such that the $\zeta$-derivative of the period map $\Pcal(\wt x_1(\cdotp,\zeta),y_1,\ldots,x_n,y_n)$ at $\zeta=0$ is an isomorphism 
(cf.\ \eqref{eq:derivative-period}). We proceed as follows. Since $y_1$ is holomorphic and nonconstant  on each component of $K$, 
we may first construct smooth functions $g_k$ on $C$, with support on $\wt C_k\subset \mathring K$, defining the spray 
$\wt x_1(\cdotp,\zeta)$ on $C$ and then, by Mergelyan theorem, assume that each $g_k$ is holomorphic on $R$ 
(recall that $C$ is Runge in $R$). This ensures the existence of the desired spray on  $S$, the domain of definition of $x_1$.

Next, we approximate $(x_1,y_1,\ldots,x_n,y_n)\in\Ascr^1(S)^{2n}$ in the $\Cscr^1(S)$-norm
by a holomorphic map $(x_1',y_1',\ldots,x_n',y_n')\in\Oscr(R)^{2n}$ such that $\sum_{j=1}^nx_j'dy_j'$ does not vanish everywhere on $R$
and all the component functions $x_1',y_1',\ldots,x_n',y_n'$ are nonconstant. For that, recall that $S$ is Runge in $R$ and apply 
Mergelyan approximation with jet-interpolation. Let $\wt x_1'\colon R\times\c^\ell\to\c$ be the spray \eqref{eq:X-spray-comp}
obtained by replacing the core $x_1$ by $x_1'$:
\begin{equation}\label{eq:X'-spray-comp}
	\wt x_1'(u,\zeta) = x_1'(u) + \sum_{k=1}^\ell \zeta_k\, g_k(u),\quad u\in R,\ \zeta\in\C^\ell. 
\end{equation}
If the approximations are close enough, then the period map $\Pcal(\wt x_1'(\cdotp,\zeta),y_1',\ldots,x_n',y_n')$ 
is so close to $\Pcal(\wt x_1(\cdotp,\zeta),y_1,\ldots,x_n,y_n)$ that there is a point $\zeta'\in\C^\ell$ close to $0$
for which 
\[
	\Pcal(\wt x_1'(\cdotp,\zeta'),y_1',\ldots,x_n',y_n')=0.
\]
This means that $\wt x_1'(\cdotp,\zeta') dy_1'+\sum_{j=2}^nx_j'dy_j'$ is an
exact holomorphic $1$-form on $R$. Pick an initial point $u_0\in \mathring K$ and define 
the holomorphic function $z'\in \Oscr(R)$ by 
\[
	z'(u) = z(u_0) - \int_{u_0}^u \Big(\wt x_1'(\cdotp,\zeta') dy_1'+\sum_{j=2}^nx_j'dy_j'\Big),\quad u\in R.
\]
Clearly, the map $\wt f=\bigl(\wt x_1'(\cdotp,\zeta'),y_1',\ldots,x_n',y_n',z'\bigr) \colon R\to \C^{2n+1}$ is then a holomorphic Legendrian 
curve approximating $f$ in the $\Cscr^1(S)$-topology and having no constant component function provided that $\zeta'$ is close enough 
to $0\in\c^\ell$. This proves the first part of the lemma.

For the second part, let $\sigma\in\{1,\ldots,2n+1\}$ and assume that conditions {\rm (i)} and {\rm (ii)} hold. By Remark \ref{rem:etaprime} we may assume that $\sigma\in\{2,2n+1\}$.

\noindent{\em Case 1: Assume that $\sigma=2$.} In this case, the argument above works with the only difference that when 
approximating $(x_1,y_1,\ldots,x_n,y_n)\in\Ascr^1(S)^{2n}$ in the $\Cscr^1(S)$-norm
by a holomorphic map $(x_1',y_1',\ldots,x_n',y_n')\in\Oscr(R)^{2n}$, we choose $y_1'=y_1$. Take into account that, in this case, 
$y_1$ is holomorphic and nonconstant on $R$ by assumption {\rm (i)}, and so neither the argument at the end of the proof of 
Theorem \ref{th:RH} nor Remark \ref{rem:etaprime} are required.

\noindent{\em Case 2: Assume that $\sigma=2n+1$.} By {\rm (ii)} and Remark \ref{rem:etaprime} we may assume that $x_1$ and $y_1$ 
are not constant on any component of $K$, $x_1$ has no zeros in $\Gamma$, and $y_1$ has no critical points in $\Gamma$. 
Denote by $\mho$ the subset of $\Ascr^1(S)^{2n}$ consisting of those maps $(a_1,a_2,b_2,\ldots,a_n,b_n,w)\colon S\to\c^{2n}$ 
(note that $b_1$ is omitted) such that, for some (and hence for any) holomorphic $1$-form $\theta$ vanishing nowhere on $R$, the map
\[
      \frac1{a_1\theta}\big(dw+\sum_{j=2}^n a_jdb_j) : S\to\c
\]
is well-defined and of class $\Ascr^0(S)$ (see \eqref{eq:ArS} and \eqref{eq:df}). The fact that $f$ is a generalized Legendrian curve 
implies that $(x_1,x_2,y_2,\ldots,x_n,y_n,z)\in\mho$. Consider the period map
\[
	\Pcal=(\Pcal_1,\ldots,\Pcal_\ell)  : \mho\to\C^\ell
\] 
with the components 
\[
    \Pcal_k(a_1,a_2,b_2,\ldots,a_n,b_n,w)=\int_{C_k}  \frac1{a_1}\big(dw+\sum_{j=2}^n a_jdb_j),\quad k=1,\ldots,\ell.
\]
We shall now construct a spray $\wt x_1(\cdotp,\zeta)\colon S\to \C$ of the form
\begin{equation}\label{eq:X-spray-comp21}
	\wt x_1(u,\zeta) = x_1(u) e^{\sum_{l=1}^\ell \zeta_l\, g_l(u)},\quad u\in S,\ \zeta\in\C^\ell, 
\end{equation} 
where $g_l\in\Oscr(R)$ for all $l\in\{1,\ldots,\ell\}$,
with the core $\wt x_1(\cdotp,0)=x_1$, depending holomorphically on $\zeta\in \C^\ell$, such that  the $\zeta$-derivative of 
the period map $\Pcal(\wt x_1(\cdotp,\zeta),x_2,y_2,\ldots,x_n,y_n,z)$ at $\zeta=0$ is an isomorphism. 
(Note that $(\wt x_1(\cdotp,\zeta),x_2,y_2,\ldots,x_n,y_n,z)\in \mho$ for all $\zeta\in\c^\ell$ since $(x_1,x_2,y_2,\ldots,x_n,y_n,z)\in\mho$ 
and $e^{\sum_{l=1}^\ell \zeta_l g_l}$ has no zeros.) We proceed as follows. Note that, for such a spray,
\[
     	\frac{\di}{\di\zeta_l}\bigg|_{\zeta=0} \int_{C_k} 
	\frac1{\wt x_1(\cdot,\zeta)}\big(dz+\sum_{j=2}^n x_jdy_j)
	=-\int_{C_k} \frac{g_l}{x_1}\big(dz+\sum_{j=2}^n x_jdy_j) 
	 = \int_{C_k} g_l\, dy_1. 
\]

As above, since $\wt C_k\subset\mathring K$ and $y_1$ is holomorphic and nonconstant on every component of $K$, we may first 
construct smooth functions $g_l$ on $C$, with support on $\wt C_k$, defining the spray $\wt x_1(\cdotp,\zeta)$ on $C$, such that
\[
      \int_{C_k} g_l\, dy_1 \, \approx\, \delta_{l,k}.
\]
(Recall that $\delta_{l,k}$ is the Kronecker symbol.)
By the Mergelyan theorem, we may assume that each $g_l$ is holomorphic on $R$. As above, 
this guarantees the existence of the desired spray $\wt x_1(\cdotp,\zeta)$ on $S$.
We next approximate $(x_1,x_2,y_2,\ldots,x_n,y_n)\in\Ascr^1(S)^{2n-1}$ in the $\Cscr^1(S)$-norm
by a holomorphic map $(x_1',x_2',y_2',\ldots,x_n',y_n')\in\Oscr(R)^{2n-1}$ such that the following hold:
\begin{enumerate}[\rm (a)]
\item Each component function $x_1',x_2',y_2',\ldots,x_n',y_n'$ is nonconstant.
\vspace{1mm}
\item The zeros of $x_1'$ in $R$ are those of $x_1$ in $S$ (which lie in $K$, see assumption {\rm (ii)}), with the same order; in particular, $x_1'$ 
has no zeros in $R\setminus K$.
\vspace{1mm}
\item The $1$-form $dz+\sum_{j=2}^n x_j'dy_j'$ does not vanish identically on $R$, but it does vanish at the zeros of 
$dz+\sum_{j=2}^n x_jdy_j=-x_1dy_1$ in $K$, with the same order. 
(Observe that, by {\rm (ii)}, $x_1dy_1$ does not vanish identically on $K$ and has no zeros in $\Gamma$.)
\end{enumerate}
To ensure these conditions we make use of Mergelyan approximation with jet interpolation.

Let $\wt x_1'\colon R\times\c^\ell\to\c$ be the spray \eqref{eq:X-spray-comp21}
obtained by replacing the core $x_1$ by $x_1'$:
\begin{equation}\label{eq:X'-spray-comp21}
	\wt x_1'(u,\zeta) = x_1'(u) e^{\sum_{l=1}^\ell \zeta_l\, g_l(u)},\quad u\in R,\ \zeta\in\C^\ell. 
\end{equation}
Since $(x_1,x_2,y_2,\ldots,x_n,y_n,z)\in\mho$, conditions {\rm (b)} and {\rm (c)} above guarantee that 
\begin{equation}\label{eq:mho1}
\frac1{\wt x_1'(\cdot,\zeta)}\big(dz+\sum_{j=2}^n x_j'dy_j')\quad \text{is holomorphic on $R$ for all $\zeta\in\c^\ell$}.
\end{equation}
As above, if the approximations are close enough, there is a point $\zeta'\in\C^\ell$ close to $0$ for which 
$\Pcal(\wt x_1'(\cdotp,\zeta'),x_2',y_2',\ldots,x_n',y_n',z)=0$. 
Thus, choosing an initial point $u_0\in \mathring K$ and defining 
\[
	y_1'(u) = y_1(u_0) - \int_{u_0}^u \frac1{\wt x_1'(\cdot,\zeta')}\big(dz+\sum_{j=2}^n x_j'dy_j'),\quad u\in R,
\]
it follows from \eqref{eq:mho1} that $y_1'\colon R\to\c$ is holomorphic and, in view of {\rm (a)} and {\rm (c)}, the map 
\[
	\wt f=\bigl(\wt x_1'(\cdotp,\zeta'),y_1',\ldots,x_n',y_n',z\bigr)  : R\to \C^{2n+1}
\]
is a holomorphic Legendrian curve satisfying the conclusion of the lemma, provided that the approximations are sufficiently close.
\end{proof}

%
% Approximation by embeddings
%

\begin{lemma}\label{lem:position}
Let  $M$ be a compact bordered Riemann surface. Every  Legendrian curve $f\colon M\to \C^{2n+1}$ $(n\in \n)$ of class $\Ascr^1(M)$  
may be approximated in the $\Cscr^1(M)$-topology by Legendrian embeddings $\wt f \colon M\hra \C^{2n+1}$ of class $\Ascr^1(M)$ 
having no constant component function.
\end{lemma}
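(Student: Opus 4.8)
The plan is to combine the Mergelyan-type approximation of Lemma~\ref{lem:fixedcomponents} with a general position (transversality) argument carried out \emph{within} the space of holomorphic Legendrian curves, using Abraham's reduction of the parametric transversality theorem \cite{Abraham1963TAMS}, in the spirit of the analogous results for minimal surfaces, null curves, and directed immersions.

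\emph{Reduction.} First I would arrange that $M$ is connected (otherwise one treats its components one at a time and perturbs to make the images pairwise disjoint, which is an easy additional transversality). Regard $M$ as a smoothly bounded compact domain in an open Riemann surface and fix a connected open Riemann surface $R\supset M$ that deformation retracts onto $M$; then $M$ (with $\Gamma=\emptyset$) is an admissible subset of $R$ which is a deformation retract of $R$. By Lemma~\ref{lem:fixedcomponents} we may approximate $f$ in the $\Cscr^1(M)$-topology by a holomorphic Legendrian curve $R\to\C^{2n+1}$ with no constant component function; renaming, I henceforth assume that $f\colon R\to\C^{2n+1}$ is holomorphic, Legendrian, and without constant component, and it suffices to approximate $f|_M$ in $\Cscr^1(M)$ by Legendrian embeddings of $M$.

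\emph{The dominating spray (main point).} Recall from Subsec.~\ref{ss:eta} that the Legendrian fields $\di_{x_j},\ \di_{y_j}-x_j\di_z$ $(j=1,\dots,n)$ together with the Reeb field $\di_z$ span $T\C^{2n+1}$ at every point, and that the local flows of all of them — the Reeb flow being the translation $z\mapsto z+t$ — carry Legendrian curves to Legendrian curves. Composing $f$ with such flows, where each time parameter is replaced by a holomorphic function on $R$ supported near one of finitely many chosen points of $\mathring M$, and adjoining finitely many further parameters to keep a fixed set of periods of $\sum_j x_j\,dy_j$ equal to zero — exactly as in the constructions of period-dominating sprays in the proofs of Lemma~\ref{lem:fixedcomponents} and Theorem~\ref{th:RH} — one obtains a holomorphic map $\Phi\colon R\times B\to\C^{2n+1}$, with $B$ a ball about $0$ in some $\C^N$, such that each $\Phi_\zeta:=\Phi(\cdot,\zeta)$ is a holomorphic Legendrian curve, $\Phi_0=f$, and, with the supports and coefficient functions chosen appropriately, the spray is \emph{dominating} on $M$; moreover the same construction can be arranged so that the induced sprays $\zeta\mapsto d\Phi_\zeta(p)$ (derivatives) and $\zeta\mapsto\Phi_\zeta(p)-\Phi_\zeta(q)$ (for $p\neq q$ in $M$) are submersive at $\zeta=0$ as well. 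I expect this to be the crux of the proof: because the Legendrian condition is nonlinear and base-point dependent, one must produce domination and control of periods \emph{simultaneously}, which is exactly the novelty stressed in the Introduction; granting such a spray, the rest is routine.

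\emph{General position.} Fix a nowhere-vanishing holomorphic $1$-form $\theta$ on $R$. The map $(u,\zeta)\mapsto d\Phi_\zeta(u)/\theta\in\C^{2n+1}$ is a holomorphic spray which, by the domination of derivatives, is a submersion along its zero set, hence transverse to $0$; since $\dim_\R\C^{2n+1}=4n+2\ge 6>2=\dim_\R M$, Abraham's transversality theorem yields a residual, hence dense, set of $\zeta\in B$ for which $\Phi_\zeta$ is an immersion on a neighbourhood of $M$, and this set is also open. Pick such a $\zeta_0$ close to $0$, choose a neighbourhood $\Delta'$ of the diagonal in $M\times M$ on which $\Phi_{\zeta_0}$ is injective, and note that $\Phi_\zeta$ remains an injective immersion on $\overline{\Delta'}$ for all $\zeta$ near $\zeta_0$. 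Applying Abraham's theorem once more to the difference spray $(p,q,\zeta)\mapsto\Phi_\zeta(p)-\Phi_\zeta(q)$ on $\bigl((M\times M)\setminus\mathring{\Delta'}\bigr)\times B$ — which is submersive along its zero set by the domination of differences — and using $4n+2>4=\dim_\R\bigl((M\times M)\setminus\mathring{\Delta'}\bigr)$, one obtains $\zeta$ arbitrarily close to $0$ (in particular close to $\zeta_0$) for which $\Phi_\zeta$ is in addition injective on $(M\times M)\setminus\mathring{\Delta'}$. For such a $\zeta$, $\Phi_\zeta$ is an injective immersion on $M$, hence an embedding since $M$ is compact, and, being close to $f$ which has no constant component, $\wt f:=\Phi_\zeta|_M$ has no constant component either. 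As $\wt f$ approximates $f$ in $\Cscr^1(M)$, this proves the lemma.
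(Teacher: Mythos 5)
Your overall architecture (reduce to a holomorphic Legendrian curve on an ambient surface $R$ via Lemma \ref{lem:fixedcomponents}, then get injectivity by applying Abraham's transversality theorem to the difference map, using $\dim_\R(M\times M)=4<4n+2$) is the same as the paper's. But the step you yourself identify as the crux --- the construction of the spray $\Phi$ --- is carried out by a mechanism that does not work. The vector fields $\di_{x_j}$ and $\di_{y_j}-x_j\di_z$ from \eqref{eq:spanning} are tangent to $\ker\eta$ but are \emph{not} infinitesimal contactomorphisms: one computes $\Lcal_{\di_{x_j}}\eta=dy_j$ and $\Lcal_{\di_{y_j}-x_j\di_z}\eta=-dx_j$, neither of which is a multiple of $\eta$ (equivalently, by Theorem \ref{th:infinitesimal} the only contact Hamiltonian with $V\rfloor\eta=0$ is $V=0$). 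So even their constant-time flows fail to carry Legendrian curves to Legendrian curves. Worse, for \emph{any} vector field --- including the Reeb field $\di_z$, whose constant-time flow does preserve $\eta$ --- replacing the time parameter by a nonconstant holomorphic function $t(u)$ destroys the Legendrian condition: e.g.\ $(x,y,z+t(u))$ satisfies $f^*\eta=dt\ne0$. Hence the maps $\Phi_\zeta$ produced by your recipe are not Legendrian, and the whole general-position argument has nothing to stand on. The correct mechanism (used throughout the paper) is different in kind: perturb the components $x_j,y_j$ directly by adding holomorphic functions, restore the vanishing of the periods of $\sum_j x_j\,dy_j$ with a period-dominating spray, and then \emph{recompute} $z$ by integration, as in \eqref{eq:tildez}.

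There is a second, independent gap in the immersion step. Even a spray obtained by composing $f$ with genuine global contactomorphisms (constant parameters) can never make $\zeta\mapsto d\Phi_\zeta(u)$ submersive along its zero set, because composing with a diffeomorphism does not move the critical points of $f$; making the derivative spray transverse to $0$ requires perturbing $1$-jets, which your construction does not provide. The paper avoids jet transversality altogether: since the Legendrian relation forces $dz=-\sum_j x_j\,dy_j$, the curve $f=(x,y,z)$ is an immersion exactly when its Lagrange projection $(x,y)$ is, and one achieves this directly by replacing $x$ with $x+\delta h$, where $h$ is a Gunning--Narasimhan function with $dh(u_j)\ne0$ at the zeros $u_j$ of $dy$, and then correcting the periods with generators $g_k$ satisfying $dg_k(u_j)=0$ so as not to reintroduce critical points. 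Your injectivity step would go through essentially as in the paper once a genuine Legendrian spray with submersive difference map is in hand, but that spray must be built by the perturb-and-reintegrate method (with the functions $h_1,h_2$ pinned at $p,q$ and along an arc $E$, and the period correction shown to be of size $O(\mu)$), not by composing with flows.
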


\begin{proof} 
We assume that $n=1$; the same proof applies in general. We may also assume without loss of generality that $M$ is connected.

Let $R$ be an open Riemann surface containing $M$ as a smoothly bounded compact domain which is a deformation retract of $R$. 
By Lemma \ref{lem:fixedcomponents} we may assume that $f$ is a holomorphic Legendrian curve on $R$ having no constant 
component function. Write  $f=(x,y,z)\colon R\to \C^3$. 

The proof will proceed in two steps. In the first step we shall approximate $f$ in the $\Cscr^1(M)$-topology by a 
Legendrian immersion  $R\to\C^3$  whose $(x,y)$-projection 
is an immersion $R\to \C^2$. In the second step  we shall show how to remove double points on $M$ of the new Legendrian immersion  
and hence obtain a Legendrian embedding $M\hra\C^3$. If the approximation in both steps is close enough then the resulting 
Legendrian embedding will have no constant component function.

Let $C_1,\ldots,C_\ell\subset \mathring M$ be closed curves forming a basis of the homology group
$H_1(M;\Z)=\Z^\ell$ such that the union $\bigcup_{k=1}^\ell C_k$ is Runge  in $\mathring M$, hence in $R$.  Let 
\[
	\Pcal=(\Pcal_1,\ldots,\Pcal_\ell)  : \Ascr^1(M)^2\to\C^\ell
\] 
be the period map \eqref{eq:periodmap} with the components 
\[
    \Pcal_k(g,h)=\int_{C_k} g\,dh,\quad g,\,h\in \Ascr^1(M),\quad k=1,\ldots,\ell.
\]

For the first step of the proof, let $u_1,u_2,\ldots$ be the zeros of $dy$ in $R$. We shall approximate 
$x$ in the $\Cscr^1(M)$-norm by a function $x'\in \Oscr(R)$ such that 
\begin{equation}\label{eq:x'-conditions}
	\text{$dx'(u_j)\ne 0$\ \ for\ \ $j=1,2,\ldots$ \ \ and\ \ $\Pcal(x',y)=0$.}
\end{equation}
The first condition ensures that the map $(x',y)\colon R\to\C^2$ is an immersion, 
and the second one that $x'dy$ is an exact $1$-form on $R$.
Choosing an initial point $u_0\in M$ and setting 
\begin{equation} \label{eq:zprime}
	z'(u) = z(u_0) - \int_{u_0}^u x' dy,\quad u\in R
\end{equation} 
we shall obtain a desired Legendrian immersion $f'=(x',y,z')\colon R\to \C^3$.

We now explain how to find the function $x'\in \Oscr(R)$ satisfying \eqref{eq:x'-conditions}.
Choose a function $h\in \Oscr(R)$ such that for every $j=1,2,\ldots$ we have 
$dh(u_j)=0$ if $dx(u_j)\ne 0$ and $dh(u_j)\ne 0$ if $dx(u_j) = 0$; the existence of such a function is ensured, for instance, by the 
theorem of Gunning and Narasimhan (see \cite{GunningNarasimhan1967MA}). Then, for every $\delta\in\c\setminus\{0\}$
the function $x_\delta:=x+\delta h\in \Oscr(R)$ satisfies 
\begin{equation}\label{eq:dx1ne0}
	dx_\delta(u_j)\ne 0,\quad j=1,2,\ldots.
\end{equation}
Hence, $(x_\delta,y)\colon R\to\C^2$ is an immersion.
We need to correct $x_\delta$ in order to achieve the period vanishing condition in \eqref{eq:x'-conditions}.
To this end, choose the curves $C_1,\ldots,C_\ell$ above 
such that $C:=\bigcup_{k=1}^\ell C_k$ does not contain any of the critical points 
$u_1,u_2,\ldots$ of $y$. Let $\wt x(\cdotp,\zeta)\colon R\to \C$ $(\zeta\in\C^\ell)$ be the spray 
\eqref{eq:X-spray} with the core $\wt x(\cdotp,0)=x$, where the functions $g_k\in \Oscr(R)$ 
are chosen such that the derivative of the period map $\Pcal(\wt x(\cdotp,\zeta),y)$ at $\zeta=0$ 
(see \eqref{eq:derivative-period})  is an isomorphism and $dg_k(u_j)=0$ for all $k=1,2,\ldots,\ell$ and 
$j=1,2,\ldots$. Let $\wt x_\delta(\cdotp,\zeta)\colon R \to \C$ be the holomorphic spray
\[
	\wt x_\delta(u,\zeta) = x_\delta(u) + \sum_{k=1}^\ell \zeta_k\, g_k(u),\quad u\in R,\ \zeta\in\C^\ell.
\]
If $x_\delta$ is sufficiently close to $x$ on the compact set $C$ (which can be achieved by choosing $|\delta|>0$ small enough), 
then the period map $\Pcal(\wt x_\delta(\cdotp,\zeta),y)$ is close to $\Pcal(\wt x(\cdotp,\zeta),y)$.
If the approximation is close enough, the implicit function theorem furnishes a point $\zeta_\delta \in\C^\ell$ 
near $0$ such that $\Pcal(\wt x_\delta(\cdotp,\zeta_\delta),y)=0$ and $\lim_{\delta\to 0} \zeta_\delta =0$.
Fix such $\delta$ and set $x'=\wt x_\delta(\cdotp,\zeta_\delta)\in \Oscr(R)$;  hence the $1$-form $x'dy$ is exact on $R$.
Since $dg_k(u_j)=0$, we also have that $dx'(u_j)= dx_\delta(u_j)\ne 0$ for all $j=1,2,\ldots$ (cf.\ \eqref{eq:dx1ne0}),
so $(x',y)\colon R\to \C^2$ is an immersion. Finally, defining the function $z'\in\Oscr(R)$ by 
\eqref{eq:zprime} we obtain a Legendrian immersion $f'=(x',y,z')\colon R\to \C^3$ 
which approximates $f$ in the $\Cscr^1(M)$-norm. This concludes the first step of the proof.

%
%  Construction of embeddings
%

It remains to show that a holomorphic Legendrian immersion $f\colon M\to\C^3$ can be approximated in the $\Cscr^1(M)$-norm 
by holomorphic Legendrian embeddings $\wt f\colon M\to\C^3$. 
We shall follow the idea in  \cite[proof of Theorem 4.1]{AlarconForstnericLopez2016MZ}
where the analogous result was shown for holomorphic null curves in $\C^n$ for $n\ge 3$; a similar idea
was also used in \cite[proof of Theorem 2.4]{AlarconForstneric2014IM} to find conformal minimal 
embeddings of open Riemann surfaces into $\R^n$ for any $n\ge 5$.

To a map $f\colon M\to \C^3$ we associate the {\em difference map} $\delta f\colon M\times M\to \C^3$ defined by
\[
	\delta f(u,v)=f(v)-f(u), \quad u,v\in M.
\]
Clearly, $f$ is injective if and only if 
\[
	(\delta f)^{-1}(0)= D_M:=\{(u,u): u\in M\}.
\] 
Assuming that $f$ is an immersion, there is an open neighborhood 
$U\subset M\times M$ of the diagonal  $D_M$ such that $\delta f$ does not assume the value 
$0\in \C^3$ on $\overline U\setminus D_M$. 

Assume that $f\colon M\to\c^3$ is a holomorphic Legendrian immersion.
We shall approximate $f$ in the $\Cscr^1(M)$-norm by a holomorphic Legendrian immersion 
$\wt f \colon M\to\C^3$ whose difference map $\delta \wt f$  is transverse to the origin 
$0\in \C^3$ on $M\times M\setminus U$. Since $\dim M\times M =2 < 3=\dim\C^3$, this will imply that 
$\delta\wt f$ does not assume the value zero on $M\times M\setminus U$, so 
$\wt f(u)\ne \wt f(v)$ if $(u,v)\in M\times M\setminus U$. 
If on the other hand $(u,v)\in U \setminus D_M$, then $\wt f(u)\ne \wt f(v)$ provided that 
$\wt f$ is close enough to $f$; hence the map $\wt f$ is an embedding. 

To find such $\wt f$, it suffices to construct a holomorphic map $H\colon M \times \C^N \to \C^3$ 
for some big integer $N\in\N$ satisfying the following properties for some $r>0$.
\begin{itemize}
\item[\rm (a)] $H(\cdotp,0)=f$.
\vspace{1mm}
\item[\rm (b)] The map $H(\cdotp,\xi)\colon M\to \C^3$ is a holomorphic Legendrian immersion 
for every $\xi \in r\B$, where $\B$ is the unit ball in $\C^N$.
\vspace{1mm}
\item[\rm (c)]  The difference map $\delta H\colon M\times M \times  r\B \to \C^3$, defined by 
\[
	\delta H(u,v,\xi) = H(v,\xi)-H(u,\xi), \quad u,\, v \in M, \  \xi \in r\B,
\]
is a {\em submersive family  of maps} on $M\times M\setminus U$, in the sense that  
\begin{equation} \label{eq:pd}
	\di_\xi|_{\xi=0} \, \delta H(u,v,\xi)   : \C^N \to \C^3 \ \ 
	\text{is surjective for every $(u,v)\in M\times M\setminus U$}. 
\end{equation}
\end{itemize}

Assume for a moment that such $H$ exists. By compactness of $M\times M \setminus U$
it follows from \eqref{eq:pd}  that the partial differential $\di_\xi (\delta H)$ is surjective 
on $(M\times M\setminus U) \times r'\B$ for some $0<r' \le r$. Hence the map 
$\delta H \colon (M\times M\setminus U)\times r'\B \to\C^3$ 
is transverse to any submanifold of $\C^3$, in particular, to the origin $0\in \C^3$. 
By Abraham's reduction to Sard's theorem \cite{Abraham1963TAMS} 
(see also \cite[Section  7.8]{Forstneric2011book} and the references therein for the 
holomorphic case) if follows that for a generic choice of $\xi\in r'\B$  the difference map 
$\delta H(\cdotp,\cdotp,\xi)$ is transverse to $0\in\C^3$ on $M\times M\setminus U$,
and hence it omits the value $0$ by dimension reasons. 
Choosing $\xi$ sufficiently close to $0\in\C^N$ we obtain a holomorphic 
Legendrian embedding $\wt f = H(\cdotp,\xi)\colon M \to \C^3$ close to $f$ in the $\Cscr^1(M)$-norm.

The main point is to find for any given point $(p,q)\in M\times M\setminus U$ 
a spray $H$ as above, with $N=3$, such that \eqref{eq:pd} holds
at $(u,v)=(p,q)$. Since the submersivity of the differential is an open condition and 
$M\times M\setminus U$ is compact, we obtain a spray $H$ that is submersive
at all points of $M\times M\setminus U$ by composing finitely many such sprays 
as explained in \cite[proof of Theorem 4.1]{AlarconForstnericLopez2016MZ}
or \cite[proof of Theorem 2.4]{AlarconForstneric2014IM}.

Fix a pair of distinct points $p\ne q$  in $M$. Choose a smooth embedded
arc $E \subset M$ connecting $p$ to $q$. As above, there exist smooth closed curves $C_1,\ldots,C_\ell\subset M$ 
forming a basis of the homology group $H_1(M;\Z)=\Z^\ell$ such that 
$\big(\bigcup_{k=1}^\ell C_k\big) \cap E = \emptyset$ and $\big(\bigcup_{k=1}^\ell C_k\big) \cup E$
is Runge in $M$. Given a number $\mu >0$, we choose holomorphic functions 
$g_1,\ldots, g_\ell, h_1,h_2 \in \Oscr(M)$ satisfying the following conditions:
\begin{itemize}
\item[\rm (i)] $\int_{C_j} g_k \, dy\approx \delta_{j,k}\ \ \text{for all}\ \ j,k=1,\ldots,\ell$;
\vspace{1mm}
\item[\rm (ii)] $|g_k(u)| < 1$ for all $u\in E$ and $k=1,\ldots,\ell$;
\vspace{1mm}
\item[\rm (iii)] $h_1(p)=0$, $h_1(q)=1$, $h_2(p)=h_2(q)=0$;
\vspace{1mm}
\item[\rm (iv)]  $\int_E h_2\, dy=-1$;
\vspace{1mm}
\item[\rm (v)]  $|h_j(u)| < \mu$ and $|dh_j(u)| < \mu$ for all $u\in \bigcup_{k=1}^\ell C_k$ and $j=1,2$.
\end{itemize}
Functions with these properties are easily found by first constructing suitable smooth 
functions on the curves $\big(\bigcup_{k=1}^\ell C_k\big) \cup E$ and applying 
Mergelyan's approximation theorem.

Let $\xi=(\xi_1,\xi_2,\xi_3)\in \C^3$ and $\zeta=(\zeta_1,\ldots,\zeta_\ell)\in \C^\ell$.
Consider the following sprays of holomorphic functions of $u\in M$:
\begin{eqnarray} 
	\wt x(u,\xi,\zeta) &=& x(u)+ \xi_1 h_1(u) + \xi_3 h_2(u) +  \sum_{k=1}^\ell \zeta_k\, g_k(u), 
	\label{eq:spraysxy1} \\
	\wt y(u,\xi)         &=& y(u) + \xi_2 h_1(u).  \label{eq:spraysxy2}
\end{eqnarray}
Note that $\wt x(u,0,0)=x(u)$ and $\wt y(u,0)=y(u)$. Condition (i) ensures that 
\[
	\frac{\di}{\di\zeta}\bigg|_{\zeta=0} \Pcal\bigl(\wt x(\cdotp,0,\zeta),\wt y(\cdotp,0)\bigr)  : \C^\ell \lra \C^\ell
	\quad \text{is an isomorphism}.
\]
Therefore, by the implicit function theorem, the period vanishing equation 
\begin{equation}\label{eq:periodvanishing}
	\Pcal\bigl(\wt x(\cdotp,\xi,\zeta),\wt y(\cdotp,\xi)\bigr) = 
	\left( \int_{C_k} \wt x(\cdotp,\xi,\zeta)\, d\wt y(\cdotp,\xi) \right)_{k=1,\ldots,\ell}  =0 	
\end{equation}
(which holds at $\xi=0$ and $\zeta=0$) can be solved in the form $\zeta=\rho(\xi)$,
where $\rho$ is a holomorphic map from a neighborhood of $0\in \C^3$ to a neighborhood
of $0\in \C^\ell$ with $\rho(0)=0$. We must estimate the differential $d\rho(\xi)$ at $\xi=0$. 
Differentiating \eqref{eq:periodvanishing} gives in view of $\zeta=\rho(\xi)$ and the chain rule that
\[
	\frac{\di}{\di\zeta}\big|_{\zeta=0} \Pcal\bigl(\wt x(\cdotp,0,\zeta),\wt y(\cdotp,0)\bigr) \cdotp
	d\rho(0) = - \frac{\di}{\di\xi}\big|_{\xi=0} \Pcal\bigl(\wt x(\cdotp,\xi,0),\wt y(\cdotp,\xi)\bigr).
\]
From \eqref{eq:spraysxy1}, \eqref{eq:spraysxy2} and condition (v) on $h_1$ and $h_2$ we see that 
the right hand side of the above equation is of size $O(\mu)$.
Indeed, its components are integrals over the curves $C_1,\ldots, C_\ell$ of terms which involve at least one of the functions 
$h_1,h_2$ or $dh_1$; these are of size $O(\mu)$ by condition (v).
Since the first term on the left hand side is close to the identity, we get
\begin{equation}\label{eq:estimatedrho}
	|d\rho(0)| = O(\mu).
\end{equation}

Define the holomorphic spray $\wt z(\cdotp,\xi)$ with the core $\wt z(\cdotp,0)=z$ on $M$ by
\begin{equation}\label{eq:tildez}
	\wt z(u,\xi)= z(p) - \int_{t=p}^{t=u} \wt x(t,\xi,\rho(\xi))\, d\wt y(t,\xi),\quad u\in M.
\end{equation}
Note that the integral is independent of the choice of the path from $p$ to $u$. It follows that
the spray $H(\cdotp,\xi)\colon M\to \C^3$ defined by
\[
	H(u,\xi) = \bigl( \wt x(u,\xi,\rho(\xi)), \wt y(u,\xi), \wt z(u,\xi) \bigr), \quad u\in M, 
\] 
consists of Legendrian maps and is holomorphic with respect to $\xi\in\C^3$ near the origin. 
Obviously, $H$ satisfies properties (a) and (b). It remains to see that it also satisfies property (c), i.e., 
\begin{equation}\label{eq:deltaHiso}
	\di_\xi|_{\xi=0} \, \delta H(p,q,\xi)  : \C^3 \to \C^3 \ \ \text{is an isomorphism}.
\end{equation}
Condition (iii) on $h_1$ and $h_2$ implies that
\begin{equation}\label{eq:xyxi}
	\frac{\di}{\di \xi}\big|_{\xi=0} \delta \wt x(p,q,\xi,0) = (1,0,0),\quad 
	\frac{\di}{\di \xi}\big|_{\xi=0} \delta \wt y(p,q,\xi,0) = (0,1,0).
\end{equation}
We claim that the following estimates hold:
\begin{eqnarray}
	\frac{\di}{\di \xi}\big|_{\xi=0}\, \delta \wt x(p,q,\xi,\rho(\xi)) &=& (1,0,0) + O(\mu), \label{eq:xxi} \\
	\frac{\di}{\di \xi}\big|_{\xi=0}\, \delta \wt y(p,q,\xi,\rho(\xi)) &=& (0,1,0)  + O(\mu), \label{eq:yxi} \\
	\frac{\di}{\di \xi_3}\big|_{\xi=0}\, \delta \wt z(p,q,\xi)          &=& 1  + O(\mu).   \label{eq:zxi}
\end{eqnarray}
The first two follow directly from the definitions of $\wt x$ and $\wt y$ (see
\eqref{eq:spraysxy1} and \eqref{eq:spraysxy2}) and taking into account
\eqref{eq:estimatedrho} and \eqref{eq:xyxi}.  To get \eqref{eq:zxi}, note that \eqref{eq:tildez} implies
\[
	\delta \wt z(p,q,\xi)=  - \int_{t=p}^{t=q} \wt x(t,\xi,\rho(\xi))\, d\wt y(t,\xi)
\]
where the integral is taken over the arc $E$. From conditions (ii), (iv), \eqref{eq:spraysxy1}, \eqref{eq:spraysxy2} and 
\eqref{eq:estimatedrho} we infer that 
\[
	\frac{\di}{\di \xi_3}\big|_{\xi=0} \delta \wt z(p,q,\xi)  = 
	-\int_E h_2\, dy - \int_E \sum_{k=1}^\ell \frac{\di \rho_k}{\di \xi_3} (0) \, g_k \, dy =1 + O(\mu)
\]
which establishes \eqref{eq:zxi}. Choosing the constant $\mu>0$  small enough, we see from \eqref{eq:xxi}, \eqref{eq:yxi} 
and \eqref{eq:zxi} that \eqref{eq:deltaHiso} holds. By what has been said before, this shows
that $f$ can be approximated on $M$ by Legendrian embeddings.
\end{proof}

%%%%%%%%%%
%%%%%%%%%%
%%%%%%%%%%
%%%%%%%%%% Section: Approximation by proper holomorphic Legendrian embeddings
%%%%%%%%%%
%%%%%%%%%%

\section{Approximation by proper holomorphic Legendrian embeddings}\label{sec:proper}

Given $n\in\n$ and $\sigma\in\{1,\ldots,n\}$ we denote by $\pi_\sigma\colon\c^n\to\c$ the coordinate projection 
\[
	\pi_\sigma(z_1,\ldots,z_n)=z_\sigma. 
\]
If $n>1$ and $\varsigma\in\{1,\ldots,n\}\setminus\{\sigma\}$, we write 
\[
	\pi_{\sigma,\varsigma}=(\pi_\sigma,\pi_\varsigma)\colon\c^n\to\c^2.
\]
In this section we prove the following main theorem of the paper.

\begin{theorem}\label{th:proper}
Let $S=K\cup \Gamma$ be an admissible subset in an open connected Riemann surface $R$ (see Definition \ref{def:admissible}), and let 
$f\colon S\to \C^{2n+1}$ be a generalized Legendrian curve (see Definition \ref{def:generalized}). Also choose two distinct numbers 
$\sigma,\varsigma\in\{1,\ldots,2n+1\}$. Then $f$ may be approximated in the $\Cscr^1(S)$-topology by holomorphic Legendrian 
embeddings $\wt f\colon R\hra \c^{2n+1}$ such that the projection $\pi_{\sigma,\varsigma}\circ f\colon R\to\c^2$ is a proper map.
\end{theorem}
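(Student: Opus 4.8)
The plan is to deduce Theorem~\ref{th:proper} from Lemmas~\ref{lem:fixedcomponents} and~\ref{lem:position} by the usual recursive ``push the boundary to infinity'' scheme, with the twist that the approximating curves must stay embedded while the chosen two-dimensional projection is driven to infinity. Throughout I would use Remark~\ref{rem:etaprime} to reduce, if necessary, to the case in which the two prescribed indices $\sigma,\varsigma$ lie in $\{1,\dots,2n\}$, i.e.\ correspond to $x_j$- or $y_j$-coordinates rather than to the $z$-coordinate; this matters because along an arc one may prescribe the $x_j,y_j$-components freely, the $z$-component being forced by integration of $-\sum x_j\,dy_j$. First I would apply Lemma~\ref{lem:fixedcomponents} to replace the given generalized Legendrian curve $f$ on $S$ by a genuine holomorphic Legendrian curve, defined on a finite-type neighborhood of $S$, that is $\Cscr^1(S)$-close to $f$ and has no constant component, and then Lemma~\ref{lem:position} to make it an embedding there; call the result $f_1$, with domain a smoothly bounded Runge domain $D_1$.

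Next I would build inductively holomorphic Legendrian curves $f_k$, each defined on a smoothly bounded Runge domain $D_k$ with $D_{k-1}\Subset D_k$ and $\bigcup_kD_k=R$, together with nested ``collars'' $D_{k-1}\Subset U_k\Subset\mathring D_k$ with $\bigcup_kU_k=R$, such that: $f_k|_{D_k}$ is an embedding; $\|f_k-f_{k-1}\|_{\Cscr^1(D_{k-1})}<\epsilon_k$ with $\sum_k\epsilon_k<\infty$; and $|\pi_{\sigma,\varsigma}\circ f_k|>\lambda_k$ on $D_k\setminus U_k$, where $\lambda_k\to\infty$ is prescribed in advance. The inductive step is the core of the argument. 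Given $f_{k-1}$ on $D_{k-1}$, I would attach finitely many pairwise disjoint smooth arcs $\gamma_i$, each meeting $bD_{k-1}$ transversely at one endpoint and running outward, so that $S':=D_{k-1}\cup\bigcup_i\gamma_i$ is admissible, Runge, connected, and a deformation retract of a finite-type neighborhood of $D_{k-1}\cup\bigcup_i\gamma_i$ (routing the arcs so this neighborhood contains a prescribed exhausting compact set, and inserting one extra arc when the exhaustion step is critical). I would extend $f_{k-1}|_{D_{k-1}}$ to a generalized Legendrian curve $\hat g$ on $S'$ by prescribing the $x_j,y_j$-components along each $\gamma_i$ (matching $1$-jets at $\gamma_i(0)$) so that $|\pi_{\sigma,\varsigma}\circ\hat g|$ stays $\ge\lambda_{k-1}-\epsilon$ along $\gamma_i$ and exceeds $\lambda_k+1$ on its outer portion, and recovering the $z$-component by integration. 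Lemma~\ref{lem:fixedcomponents} then yields a holomorphic Legendrian curve $g$ near $D_{k-1}\cup\bigcup_i\gamma_i$ that is $\Cscr^1(S')$-close to $\hat g$; by continuity $|\pi_{\sigma,\varsigma}\circ g|>\lambda_k$ on a two-dimensional neighborhood of the outer portions of the arcs, and I would then \emph{define} $D_k$ to be a sufficiently thin regular neighborhood of $S'$ and $U_k$ a sufficiently thin neighborhood of $D_{k-1}$, so that $D_k\setminus U_k$ lies in that set (while $g$ stays $\Cscr^1$-close to $f_{k-1}$ on $D_{k-1}$). Finally Lemma~\ref{lem:position} deforms $g$ slightly into an embedding $f_k$ of $D_k$ without spoiling the preceding estimates.

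Passing to the limit, $\tilde f:=\lim_kf_k$ exists on $R$ (the $\epsilon_k$ are summable and any compact subset of $R$ is eventually contained in some $D_{k-1}$), is a holomorphic Legendrian curve, and is an embedding of $R$ because it restricts to an embedding on each $D_k$: a Legendrian immersion whose difference map is transverse to $0$ off a neighborhood of the diagonal keeps that property under small $\Cscr^1$ perturbations, exactly as in the proof of Lemma~\ref{lem:position}. For properness one notes that on $D_k\setminus U_k$ one has $|\pi_{\sigma,\varsigma}\circ\tilde f|\ge\lambda_k-1$ (the stage-$k$ estimate minus the summable tail of later corrections), while on each intermediate collar $U_{k+1}\setminus D_k$ the same bound holds thanks to the ramp built at stage $k+1$; since $\bigcup_k(U_{k+1}\setminus U_k)=R\setminus U_1$ and $\lambda_k\to\infty$, every sublevel set $\{|\pi_{\sigma,\varsigma}\circ\tilde f|\le C\}$ lies in some $\overline{U_m}$ and hence is compact, so $\pi_{\sigma,\varsigma}\circ\tilde f$ is proper.

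The hardest point is the inductive step, specifically forcing $|\pi_{\sigma,\varsigma}\circ f_k|$ to be large on the whole two-dimensional shell $D_k\setminus U_k$ — not merely along the arcs — while staying $\Cscr^1$-close to $f_{k-1}$ on $D_{k-1}$ and preserving injectivity. This is what makes all three ingredients cooperate: the collar $U_k\setminus D_{k-1}$ absorbs the unavoidable transition between the two size regimes near $bD_{k-1}$; Lemma~\ref{lem:fixedcomponents} realizes the prescribed large behaviour along the arcs holomorphically, which is where the base-point dependence of the contact distribution (flagged in the introduction) is handled, through period-dominating sprays; and Lemma~\ref{lem:position} restores the embedding. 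The reduction via Remark~\ref{rem:etaprime} is precisely what allows the prescription along the arcs to be carried out when the $z$-coordinate is among the chosen pair of indices.
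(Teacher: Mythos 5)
There is a genuine gap in the inductive step, precisely at the point you identify as the hardest one. Your plan is to prescribe large values of $\pi_{\sigma,\varsigma}$ only along one-dimensional arcs $\gamma_i$ attached to $D_{k-1}$, invoke Lemma \ref{lem:fixedcomponents} on the admissible set $S'=D_{k-1}\cup\bigcup_i\gamma_i$, and then take $D_k$ to be a ``sufficiently thin regular neighborhood of $S'$'' so that the shell $D_k\setminus U_k$ lies where continuity keeps the projection large. But the domains $D_k$ must exhaust $R$, and you simultaneously ask that the regular neighborhood ``contain a prescribed exhausting compact set''; these two requirements are incompatible. If $D_k$ contains a prescribed exhausting compact domain, the shell $D_k\setminus U_k$ is a genuine two-dimensional annular region, most of which lies far from $S'$; Mergelyan/Runge approximation on the admissible set $S'$ gives no control whatsoever on the values of the approximating Legendrian curve there, so the lower bound $|\pi_{\sigma,\varsigma}\circ f_k|>\lambda_k$ on $D_k\setminus U_k$ cannot be obtained by continuity from the arcs. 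If instead $D_k$ really is a thin neighborhood of $S'$, then $\bigcup_kD_k\neq R$ and the limit is not defined on all of $R$. This is exactly why the paper isolates Lemma \ref{lem:proper}: there the shell $M_2\setminus\mathring M_1$ is decomposed into two-dimensional chunks $\Omega_{l,a}=\Upsilon_{l,a}\cup\overline{\Omega_{l,a}\setminus\Upsilon_{l,a}}$, the admissible sets $S_\sigma$, $S_\varsigma$ are built to \emph{include} the 2D pieces $\Upsilon_{l,a}$ and $\overline\Omega_{l,a}$ (so that largeness can be prescribed on them as Mergelyan data, not inferred by continuity), and --- crucially --- the approximation is performed \emph{while holding one coordinate fixed} (properties {\rm (D2)}, {\rm (E2)}, {\rm (F2)}, resting on the second part of Lemma \ref{lem:fixedcomponents}). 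Holding $\pi_\sigma$ fixed is what guarantees that on the uncontrolled transition region $\overline{\Omega_{l,a}\setminus\Upsilon_{l,a}}$ (which is \emph{not} in the admissible set) the coordinate that was already large remains large, while the other coordinate is driven up on $\Upsilon_{l,a}$; a second, symmetric pass treats the chunks of the other type. Your proposal contains neither the two-stage alternating structure nor the fixed-component device, and without them the estimate on the full shell cannot be closed.

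A secondary but also real error: you cannot use Remark \ref{rem:etaprime} to ``reduce to the case $\sigma,\varsigma\in\{1,\dots,2n\}$''. The contactomorphisms $\Phi_j$ only interchange $x_j$ and $y_j$ (and permute the pairs $(x_j,y_j)$); none of them moves the $z$-coordinate into an $x$- or $y$-slot while preserving the relevant norms. When $2n+1\in\{\sigma,\varsigma\}$ the argument genuinely changes: one must integrate $y_1$ (or $x_1$) from $dz+\sum_{j\ge2}x_j\,dy_j$ divided by $x_1$ (or $dy_1$), which is why Lemma \ref{lem:fixedcomponents} carries the extra hypothesis {\rm (ii)} and why Cases 2--5 of Claim \ref{cl:hi} require the non-vanishing and matching-zero conditions {\rm (c)}, {\rm (d)}. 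Your reduction silently discards all of these cases.
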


The next result will be the key to obtain the properness condition in Theorem \ref{th:proper}.

\begin{lemma}\label{lem:proper}
Let $M_1$ and $M_2$ be smoothly bounded compact domains in an open Riemann surface $R$ such that $M_1\subset \mathring M_2$ 
and $M_1$ is a strong deformation retract of $M_2$. Choose two distinct numbers $\sigma,\varsigma\in\{1,\ldots,2n+1\}$ $(n\in\n)$, let 
$f\colon M_1\to\c^{2n+1}$ be a Legendrian curve of class $\Ascr^1(M_1)$, and assume that 
\begin{equation}\label{eq:max}
	\max\{|\pi_\sigma\circ f|,|\pi_\varsigma\circ f|\}>\mu\quad \text{on $bM_1$}
\end{equation}
for some $\mu>0$. Then $f$ may be approximated in the $\Cscr^1(M_1)$-topology by holomorphic Legendrian embeddings 
$\wt f\colon M_2\to\c^{2n+1}$ enjoying the following properties:
\begin{enumerate}[\rm (i)]
\item $\max\{|\pi_\sigma\circ \wt f|,|\pi_\varsigma\circ \wt f|\}>\mu+1$ on $bM_2$;
\vspace{1mm}
\item $\max\{|\pi_\sigma\circ \wt f|,|\pi_\varsigma\circ \wt f|\}>\mu$ on $M_2\setminus\mathring M_1$.
\end{enumerate} 
\end{lemma}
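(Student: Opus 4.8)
The plan is to prove Lemma \ref{lem:proper} by combining the Mergelyan-type approximation result of Lemma \ref{lem:fixedcomponents}, the general-position result of Lemma \ref{lem:position}, and finitely many applications of the Riemann--Hilbert theorem for Legendrian curves (Theorem \ref{th:RH}) to push the modulus of the two distinguished components $\pi_\sigma\circ f$ and $\pi_\varsigma\circ f$ above the prescribed level on $bM_2$ while controlling it from below on the annular region $M_2\setminus\mathring M_1$. The key quantitative input is the observation --- recorded in Proposition \ref{pro:flat} (which we may invoke) --- that inside almost every affine complex hyperplane of $\c^{2n+1}$ there are properly embedded Legendrian lines $\c\hra\c^{2n+1}$ through any prescribed point; concretely, for a fixed index, say $\pi_\sigma$, the Legendrian disks $v\mapsto F(u,v)$ attached at boundary points $u\in bM_1$ can be chosen so that moving along them increases $|\pi_\sigma|$ (or $|\pi_\varsigma|$, whichever is already the larger of the two at $u$) by a definite amount, without moving the point $f(u)$ and without decreasing the larger modulus anywhere in a neighborhood of $u$ in the disk. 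This is exactly the structure required to feed into Theorem \ref{th:RH}.

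The steps, in order, would be as follows. First, by Lemma \ref{lem:fixedcomponents} (applied to an admissible set $S=K\cup\Gamma$ that is a deformation retract of a slightly larger open Riemann surface, with $M_1\subset\mathring K$) we may assume $f$ is already a holomorphic Legendrian curve on a neighborhood of $M_2$ with no constant component; in particular both $\pi_\sigma\circ f$ and $\pi_\varsigma\circ f$ are nonconstant, so the open set where $|\pi_\sigma\circ f|>\mu$ and the open set where $|\pi_\varsigma\circ f|>\mu$ together cover a neighborhood of $bM_1$ by \eqref{eq:max}, and by compactness we can cover $bM_1$ by finitely many short closed arcs $I_1,\dots,I_m\subset bM_1$, each contained in one of these two open sets, together with a choice of index $\tau(k)\in\{\sigma,\varsigma\}$ per arc for which $|\pi_{\tau(k)}\circ f|>\mu$ on a neighborhood of $I_k$. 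Second, for each arc $I_k$ we build a continuous family of Legendrian disks $v\mapsto F_k(u,v)$, $u\in bM_1$, supported on $I_k$ (i.e.\ $F_k(u,\cdot)$ is the constant disk $f(u)$ for $u\notin I_k$), with $F_k(u,0)=f(u)$, such that for $u$ near $I_k$ the endpoint $F_k(u,\t)$ has $|\pi_{\tau(k)}|$ larger than $\mu+2$ (say) while $|\pi_{\tau(k)}(F_k(u,v))|$ stays $>\mu$ for all $v\in\cd$; such families are produced exactly as in Proposition \ref{pro:flat}, by translating along a Legendrian line inside an affine hyperplane on which the $\tau(k)$-coordinate is large. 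Third, we apply Theorem \ref{th:RH} successively, once per arc $I_k$ (after slightly shrinking $M_1$ inward along $I_k$ at each stage so the hypotheses --- that $I_k$ is an arc in $bM$ which is not a boundary component --- are met, exactly as in the proof of that theorem), obtaining at each stage a holomorphic Legendrian curve on a neighborhood of $M_2$ that (i) is $\Cscr^1$-close to the previous one off a small neighborhood $V_k$ of $I_k$, (ii) has $|\pi_{\tau(k)}|>\mu+2$ on the relevant part of the new boundary, and (iii) keeps $|\pi_{\tau(k)}|>\mu$ throughout $V_k$; since the modifications are localized near disjoint arcs (up to shrinking), the estimate $\max\{|\pi_\sigma\circ\wt f|,|\pi_\varsigma\circ\wt f|\}>\mu$ is maintained on all of $M_2\setminus\mathring M_1$, and on $bM_2$ the union of the $V_k$'s covers it so conclusion (i) of the lemma holds with the $+1$ (we arranged $+2$ to absorb the approximation losses). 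Fourth and finally, we apply Lemma \ref{lem:position} to the resulting Legendrian curve on $M_2$ to make it an embedding while preserving the two strict inequalities (which are open conditions) and the closeness to the original $f$ on $M_1$.

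The main obstacle, I expect, is the bookkeeping in Step 3: one must arrange that successive Riemann--Hilbert modifications along different arcs do not spoil the modulus gains already achieved on earlier arcs, nor the lower bound on the annulus. This is handled by the standard device of shrinking $M_1$ only in a tiny neighborhood of the arc currently being treated and invoking the $\Cscr^1$-closeness off that neighborhood (condition (i) of Theorem \ref{th:RH}), together with choosing the target modulus $\mu+2$ with enough slack to absorb all the cumulative small errors; this is entirely analogous to the exhaustion arguments in \cite{AlarconDrinovecForstnericLopez2015PLMS} and \cite{DrinovecForstneric2012IUMJ}. A secondary technical point is verifying that the Legendrian disks furnished by Proposition \ref{pro:flat} really can be taken with the required two-sided modulus control on the $\tau(k)$-component along the whole disk $\cd$ and not merely at the endpoint; this follows because the relevant affine hyperplane can be chosen with $|\pi_{\tau(k)}|$ bounded below on the (bounded) portion of the Legendrian line that is used, and the disks depend continuously on $u\in bM_1$ with $F_k(u,0)=f(u)$.
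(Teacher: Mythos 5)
Your opening move --- covering $bM_1$ by finitely many arcs on each of which one of $|\pi_\sigma\circ f|$, $|\pi_\varsigma\circ f|$ exceeds $\mu$, and finishing with Lemma \ref{lem:position} --- matches the paper's proof (properties (A1)--(A3) there). But the core mechanism you propose, iterated applications of the Riemann--Hilbert theorem (Theorem \ref{th:RH}) with boundary disks built from Proposition \ref{pro:flat}, has a structural gap. Theorem \ref{th:RH} returns a Legendrian map on the \emph{same} bordered surface $M$; it does not enlarge the domain. Applying it along arcs of $bM_1$ therefore yields maps on $M_1$ and never produces the required data on $M_2\setminus\mathring M_1$ or on $bM_2$. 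If instead you first extend $f$ to a holomorphic Legendrian curve $g$ on $M_2$ via Lemma \ref{lem:fixedcomponents} and then run Riemann--Hilbert along arcs of $bM_2$, the extension $g$ is completely uncontrolled on $M_2\setminus\mathring M_1$: there is no coherent choice of which coordinate to push at a point of $bM_2$ (neither modulus need exceed $\mu$ there), and conclusion (i) of Theorem \ref{th:RH} only guarantees $H\approx g$ off a collar of the arc, where $g$ satisfies no lower bound. Either way conclusion (ii) of the lemma --- a lower bound on the full two-dimensional annulus $M_2\setminus\mathring M_1$ --- cannot be reached by boundary modifications together with approximation on one-dimensional sets.

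The paper's proof supplies exactly the missing mechanism and does not use the Riemann--Hilbert method at all. It extends $f$ along a labelled system of arcs $\gamma_{l,a}$ joining $bM_1$ to $bM_2$, with prescribed large values near $bM_2$ (using the Legendrian path approximation of Theorem \ref{th:path-approx}); it decomposes the annulus into pieces $\Omega_{l,a}\supset\Upsilon_{l,a}$ on which, by continuity, the coordinate $\pi_c\circ g$ with $(l,a)\in I_c$ is already $>\mu$ off $\Upsilon_{l,a}$; and then --- this is the step your proposal has no substitute for --- it invokes the \emph{fixed-component} clause of Lemma \ref{lem:fixedcomponents} (through the five-case Claim \ref{cl:hi}) so that this coordinate is preserved \emph{exactly}, not merely approximated on an admissible set, while the complementary coordinate is made $>\mu+1$ on the disks $\Upsilon_{l,a}$. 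Exact preservation is essential precisely because the sets $\overline{\Omega_{l,a}\setminus\Upsilon_{l,a}}$ are two-dimensional and lie outside the admissible set on which the $\Cscr^1$-approximation takes place. To repair your argument you would have to replace the Riemann--Hilbert step by this extension-along-arcs plus exact-component-preservation scheme, at which point you have reconstructed the paper's proof.
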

\begin{proof}
Let $\alpha_1,\ldots, \alpha_k$ $(k\in\n)$ be pairwise disjoint smooth Jordan curves which are the 
connected components of $b M_1$. Likewise, let $\beta_1,\ldots, \beta_k$ be the pairwise disjoint smooth Jordan curves in 
$b M_2$, labeled so that $M_2\setminus \mathring M_1$ consists of $k$ closed annuli $A_1,\ldots, A_k$ with boundaries 
$bA_l=\alpha_l\cup \beta_l$, $l=1,\ldots,k$; take into account that $M_1$ is a strong deformation retract of $M_2$.
 Since $\max\{|\pi_\sigma\circ f|,|\pi_\varsigma\circ f|\}>\mu$ on $bM_1$ (cf.\ \eqref{eq:max}), 
 there exist an integer $m\ge 2$ and compact connected subarcs $\{\alpha_{l,a} \colon a\in \z_m=\z/m\z\}$ of 
 $\alpha_l$ for $l=1,\ldots,k$ such that the following conditions hold.
\begin{enumerate}[\rm ({A}1)]
\item $\bigcup_{a\in\z_m} \alpha_{l,a}=\alpha_l$.
\vspace{1mm}
\item The arcs $\alpha_{l,a-1}$ and $\alpha_{l,a}$ meet at a point $p_{l,a}$ and are otherwise disjoint for all $a\in \z_m$, 
whereas $\alpha_{l,a}\cap\alpha_{l,a'}=\emptyset$ provided that $a'\notin\{a-1,a,a+1\}\subset \z_m$. 
\vspace{1mm}
\item The set $I:=\{1,\ldots,k\}\times\z_m$ splits into two disjoint subsets $I_\sigma$ and $I_\varsigma$ 
such that 
\[
    \text{$|\pi_c\circ f|>\mu$ on $\alpha_{l,a}$ for all $(l,a)\in I_c$, $c\in\{\sigma,\varsigma\}$.}
\] 
(Either of the sets $I_\sigma$ or $I_\varsigma$ could be empty.) 
\end{enumerate}

Choose a family of pairwise disjoint smooth Jordan arcs $\gamma_{l,a}\subset M_2\setminus\mathring M_1$, $(l,a)\in I$, 
such that $p_{l,a}\in bM_1$ is an endpoint of $\gamma_{l,a}$, the other endpoint of $\gamma_{l,a}$, which will be called $q_{l,a}$, 
lies in $bM_2$, and $\gamma_{l,a}\setminus\{p_{l,a},q_{l,a}\}\subset \mathring M_2\setminus M_1$. Moreover, we choose the 
arcs $\gamma_{l,a}\subset M_2\setminus\mathring M_1$  for $(l,a)\in I$ such that the set
\begin{equation}\label{eq:properS}
      S:=M_1\cup \big (\bigcup_{(l,a)\in I} \gamma_{l,a}\big)
\end{equation}
is admissible in $R$ (see Definition \ref{def:admissible}). Note that the set 
$A_l\setminus (\alpha_l\cup\beta_l\cup (\bigcup_{a\in \z_m} \gamma_{l,a} ))$ consists of $m$ pairwise disjoint open disks; 
we denote by $\Omega_{l,a}$ the one whose closure contains $\alpha_{l,a}$, $(l,a)\in I$. We also set 
$\beta_{l,a}:=\beta_l\cap\overline\Omega_{l,a}$ for all $(l,a)\in I$, and hence $\Omega_{l,a}$ is bounded by the arcs $\alpha_{l,a}$, 
$\gamma_{l,a}$, $\gamma_{l,a+1}$, and $\beta_{l,a}$. (See Figure \ref{fig:Lemma52}.)
\begin{figure}[ht]
    \begin{center}
 \scalebox{0.17}{\includegraphics{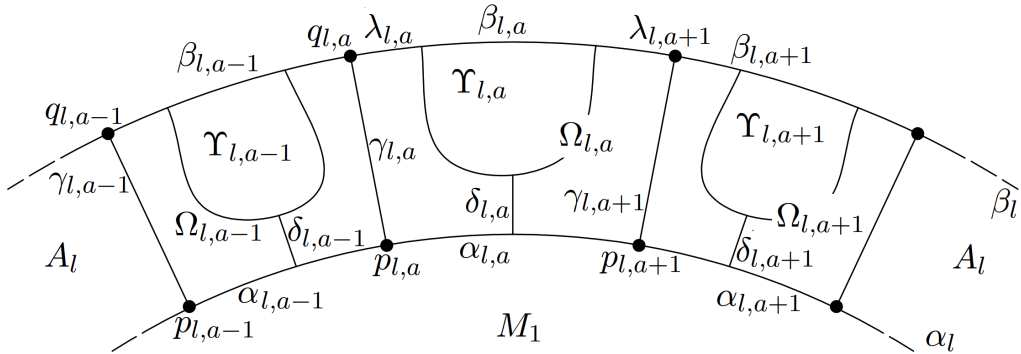}}
     \end{center}
\caption{Sets in the proof of Lemma \ref{lem:proper}.}
\label{fig:Lemma52}
\end{figure}

In the first step we extend the map $f\colon M_1\to\c^{2n+1}$ to a generalized Legendrian curve $\wh g\colon S\to\c^{2n+1}$ 
(see Definition \ref{def:generalized}) such that:
\begin{itemize}
\item $|\pi_c\circ \wh g|>\mu$ on $\gamma_{l,a}\cup\alpha_{l,a}\cup \gamma_{l,a+1}$ for all $(l,a)\in I_c$, $c\in\{\sigma,\varsigma\}$;
\vspace{1mm}
\item $\min\{|\pi_c(\wh g(q_{l,a}))|,|\pi_c(\wh g(q_{l,a+1}))|\}>\mu+1$ for all $(l,a)\in I_c$, $c\in\{\sigma,\varsigma\}$. 
\end{itemize}
To construct such extension we just use property {\rm (A3)} and the fact that every compact path in $\c^{2n+1}$ may be 
uniformly approximated by Legendrian paths (see Theorem \ref{th:path-approx}). 
Lemma \ref{lem:fixedcomponents}  then provides a holomorphic Legendrian curve $g\colon M_2\to\c^{2n+1}$ with no constant 
component function and satisfying the following conditions:
\begin{enumerate}[\rm ({B}1)]
\item the map $g$ approximates $f$ in the $\Cscr^1(M_1)$-topology;
\vspace{1mm}
\item $|\pi_c\circ g|>\mu$ on $\gamma_{l,a}\cup\alpha_{l,a}\cup\gamma_{l,a+1}$ for all $(l,a)\in I_c$, $c\in\{\sigma,\varsigma\}$;
\vspace{1mm}
\item $\min\{|\pi_c(g(q_{l,a}))|,|\pi_c(g(q_{l,a+1}))|\}>\mu+1$ for all $(l,a)\in I_c$, $c\in\{\sigma,\varsigma\}$.
\end{enumerate}
Let $\Omega\subset M_2$ be a compact domain meeting the following requirements:
\begin{itemize}
\item  $S\subset \Omega$ and $S\cap b\Omega=\{q_{l,a}\colon (l,a)\in I\}$.
\vspace{1mm}
\item The set $b\Omega \cap \beta_l$ consists of $m$ pairwise disjoint compact arcs $\{\lambda_{l,a} \colon  a\in \z_m\}$ such that 
$\lambda_{l,a}\subset\beta_{l,a-1}\cup\beta_{l,a}$ and $q_{l,a}$ is in the relative interior of $\lambda_{l,a}$ for all $a\in \z_m$, $l=1,\ldots,k$.
\vspace{1mm}
\item The set $\overline{A_l\setminus \Omega}$ consists of  $m$ pairwise disjoint, smoothly bounded, simply connected compact domains 
$\Upsilon_{l,a}$, $a\in\z_m$, $l=1,\ldots,k$. Up to relabeling, we assume that $\Upsilon_{l,a}\subset\overline\Omega_{l,a}$ for all $(l,a)\in I$. 
Note that $\Upsilon_{l,a}\cap (\gamma_{l,a}\cup\alpha_{l,a}\cup\gamma_{l,a+1})=\emptyset$.
\end{itemize}
Furthermore, in view of {\rm (B2)} and {\rm (B3)}, continuity of $g$ enables us to choose $\Omega$ sufficiently close to $S$ 
such that the following properties hold:
\begin{enumerate}[\rm ({C}1)]
\item $|\pi_c\circ g|>\mu$ on $\overline{\Omega_{l,a}\setminus \Upsilon_{l,a}}$ for all $(l,a)\in I_c$, $c\in\{\sigma,\varsigma\}$.
\vspace{1mm}
\item $|\pi_c\circ g|>\mu+1$ on $(\lambda_{l,a}\cup\lambda_{l,a+1})\cap\beta_{l,a}$ for all $(l,a)\in I_c$, $c\in\{\sigma,\varsigma\}$.
\end{enumerate}

Assume that $I_\sigma\neq\emptyset$; otherwise $I_\varsigma=I\neq\emptyset$ and we reason in a symmetric way. 
For each $(l,a)\in I_\sigma$ pick an arc $\delta_{l,a}\subset \overline{\Omega_{l,a}\setminus \Upsilon_{l,a}}$ with an endpoint in the relative interior of $\alpha_{l,a}$, the other endpoint in $\Upsilon_{l,a}\setminus \beta_{l,a}$, and otherwise disjoint from 
$\Upsilon_{l,a}\cup b\overline\Omega_{l,a}$. (See Figure \ref{fig:Lemma52}.) 
Moreover, we choose the family of arcs $\delta_{l,a}$, $(l,a)\in I_\sigma$, such that the set
\begin{equation}\label{eq:properSsigma}
      S_\sigma:=S\cup \big( \bigcup_{(l,a)\in I_\varsigma} \overline \Omega_{l,a}\big) \cup \big (\bigcup_{(l,a)\in I_\sigma} \delta_{l,a}\cup \Upsilon_{l,a}\big)\quad \text{is admissible in $R$}
\end{equation}
(see \eqref{eq:properS}) and 
\begin{equation}\label{eq:nozerosg}
     \text{no component function of $g$ has a zero 
     or a critical point in $\bigcup_{(l,a)\in I_\sigma} \delta_{l,a}$}.
\end{equation}
The latter condition is easy to fulfil since $g$ is holomorphic and has no constant component function. 
Notice that $S_\sigma\subset M_2$ is a deformation retract of $M_2$. Set 
\[
	K_\sigma:=\overline{\mathring S_\sigma},\qquad
	\Gamma_\sigma:=\overline{S_\sigma\setminus K_\sigma}=\bigcup_{(l,a)\in I_\sigma} \delta_{l,a}
\]
(cf.\ Definition \ref{def:admissible}). Let us prove the following

\begin{claim}\label{cl:hi}
There exists a generalized Legendrian curve $\wt h_\sigma\colon S_\sigma\to\c^{2n+1}$ satisfying the following properties:
\begin{enumerate}[\rm ({D}1)]
\item $\wt h_\sigma= g$ on $S\cup \big( \bigcup_{(l,a)\in I_\varsigma} \overline \Omega_{l,a}\big)$;
\vspace{1mm}
\item $\pi_\sigma\circ\wt h_\sigma=\pi_\sigma\circ g|_{S_\sigma}$;
\vspace{1mm}
\item $|\pi_\varsigma\circ\wt h_\sigma|>\mu+1$ on $\Upsilon_{l,a}$ for all $(l,a)\in I_\sigma$;
\vspace{1mm}
\item if $\sigma=2n+1$ then $\pi_1\circ\wt h_\sigma$ and $\pi_2\circ\wt h_\sigma$ are constant on no component of $K_\sigma$,  
$\pi_1\circ\wt h_\sigma$ has no zeros in $\Gamma_\sigma$ and $\pi_2\circ\wt h_\sigma$ has no critical points in $\Gamma_\sigma$.
\end{enumerate}
\end{claim}
\begin{proof}
If $2n+1\notin\{\sigma,\varsigma\}$ then, by Remark \ref{rem:etaprime}, we may assume that $\sigma=1$; 
note that the transformation in that remark preserves the norm of the first $2n$ components. Thus, it suffices to discuss the cases
$(\sigma,\varsigma)\in\{1\}\times\{1,\ldots,2n\}$ and
 $(\sigma,\varsigma)\in \big(\{1,2\}\times \{2n+1\}\big)\cup \big(\{2n+1\}\times\{1,2\}\big)$. Write $g=(a_1,b_1,\ldots,a_n,b_n,w)$.

\smallskip
\noindent{\em Case 1: $(\sigma,\varsigma)\in\{1\}\times\{1,\ldots,2n\}$.} 
Let $g'=(a_1',b_1',\ldots,a_n',b_n',w')\colon S_1\to\c^{2n+1}$ be any map of class $\Ascr^1(S_1)$ (see \eqref{eq:ArS}) 
satisfying the following conditions:
\begin{enumerate}[\rm (a)]
\item $g'=g$ on $S\cup \big( \bigcup_{(l,a)\in I_\varsigma} \overline \Omega_{l,a}\big)$;
\vspace{1mm}
\item $a_1'=a_1|_{S_1}$;
\vspace{1mm}
\item $|\pi_\varsigma \circ g'|>\mu+1$ on $\Upsilon_{l,a}$ for all $(l,a)\in I_1$.
\end{enumerate}
The existence of such a map is trivial. In principle, $g'$ does not need to be Legendrian. 
Choose an initial point $u_0\in\mathring M_1$ and define $w''\colon S_1\to\c$ by $w''=w'=w$ on 
$S\cup \big( \bigcup_{(l,a)\in I_\varsigma} \overline \Omega_{l,a}\big)$ and
\[
	w''(u)=w(u_0)-\int_{u_0}^u \sum_{j=1}^n a_j'db_j',\quad u\in S_1.
\]
Conditions {\rm (a)} and {\rm (b)}, together with the facts that $g$ is a Legendrian curve and the set 
\[
	S_1\setminus \big(S\cup \big( \bigcup_{(l,a)\in I_\varsigma} \overline \Omega_{l,a}\big)\big)=
	\bigcup_{(l,a)\in I_1} \delta_{l,a}\cup \Upsilon_{l,a}
\] 
is simply connected, ensure that $w''$ is well defined and of class $\Ascr^1(S_1)$.
Thus, conditions {\rm (a)}, {\rm (b)}, and {\rm (c)} guarantee that the map $\wt h_1:=(a_1',b_1',\ldots,a_n',b_n',w'')\colon S_1\to\c^{2n+1}$ 
is a generalized Legendrian curve satisfying properties {\rm (D1)}, {\rm (D2)}, and {\rm (D3)}. Property {\rm (D4)} is vacuous in this case.

\smallskip
\noindent{\em Case 2: $(\sigma,\varsigma)=(1,2n+1)$.} Choose any map
$g'=(a_1',b_1',\ldots,a_n',b_n',w')\colon S_1\to\c^{2n+1}$ of class $\Ascr^1(S_1)$ with the following properties:
\begin{enumerate}[\rm (a)]
\item $g'=g$ on $S\cup \big( \bigcup_{(l,a)\in I_{2n+1}} \overline \Omega_{l,a}\big)$;
\vspace{1mm}
\item $a_1'=a_1|_{S_1}$;
\vspace{1mm}
\item $|w'|>\mu+1$ on $\Upsilon_{l,a}$ for all $(l,a)\in I_1$;
\vspace{1mm}
\item $dw'+\sum_{j=2}^n a_j'db_j'$ vanishes nowhere on $\bigcup_{(l,a)\in I_1}\delta_{l,a}$ and its zeros on 
$\bigcup_{(l,a)\in I_1} \Upsilon_{l,a}$ are those of $a_1'=a_1$, with the same order.
\end{enumerate}
The existence of such $g'$ is clear; for property {\rm (d)} take into account \eqref{eq:nozerosg}. 
Now choose an initial point $u_0\in\mathring M_1$ and define $b_1''\colon S_1\to\c$ by 
$b_1''=b_1'=b_1$ on $S\cup \big( \bigcup_{(l,a)\in I_{2n+1}} \overline \Omega_{l,a}\big)$ and
\[
	b_1''(u)=b_1(u_0)-\int_{u_0}^u \frac{dw'+\sum_{j=2}^n a_j'db_j'}{a_1'},\quad u\in S_1;
\]
recall that, by \eqref{eq:nozerosg} and property {\rm (b)}, $a_1'$ has no zeros in $\bigcup_{(l,a)\in I_1}\delta_{l,a}$. It follows that the map 
$\wt h_1:=(a_1',b_1'',\ldots,a_n',b_n',w')\colon S_1\to\c^{2n+1}$ is a generalized Legendrian curve satisfying 
{\rm (D1)}, {\rm (D2)}, and {\rm (D3)}; property {\rm (D4)} is again vacuous.

\smallskip
\noindent{\em Case 3: $(\sigma,\varsigma)=(2,2n+1)$.} 
Choose any map $g'=(a_1',b_1',\ldots,a_n',b_n',w')\colon S_2\to\c^{2n+1}$ of class $\Ascr^1(S_2)$ 
enjoying the following properties:
\begin{enumerate}[\rm (a)]
\item $g'=g$ on $S\cup \big( \bigcup_{(l,a)\in I_{2n+1}} \overline \Omega_{l,a}\big)$;
\vspace{1mm}
\item $b_1'=b_1|_{S_2}$;
\vspace{1mm}
\item $|w'|>\mu+1$ on $\Upsilon_{l,a}$ for all $(l,a)\in I_2$;
\vspace{1mm}
\item $dw'+\sum_{j=2}^n a_j'db_j'$ vanishes nowhere on $\bigcup_{(l,a)\in I_2}\delta_{l,a}$ and its zeros on 
$\bigcup_{(l,a)\in I_2}\Upsilon_{l,a}$ are those of $db_1'=db_1$, with the same order.
\end{enumerate}
Now choose an initial point $u_0\in\mathring M_1$ and define $a_1''\colon S_2\to\c$ by $a_1''=a_1'=a_1$ on 
$S\cup \big( \bigcup_{(l,a)\in I_{2n+1}} \overline \Omega_{l,a}\big)$ and
\[
	a_1''(u)=-\frac{dw'+\sum_{j=2}^n a_j'db_j'}{db_1'},\quad u\in S_1.
\]
The map $\wt h_2:=(a_1'',b_1',\ldots,a_n',b_n',w')\colon S_2\to\c^{2n+1}$ is a generalized Legendrian curve 
satisfying Claim \ref{cl:hi}.

\smallskip
\noindent{\em Case 4: $(\sigma,\varsigma)=(2n+1,1)$.} Pick a map 
$g'=(a_1',b_1',\ldots,a_n',b_n',w')\colon S_{2n+1}\to\c^{2n+1}$ of class $\Ascr^1(S_{2n+1})$ meeting the following requirements.
\begin{enumerate}[\rm (a)]
\item $g'=g$ on $S\cup \big( \bigcup_{(l,a)\in I_1} \overline \Omega_{l,a}\big)$;
\vspace{1mm}
\item $w'=w|_{S_{2n+1}}$;
\vspace{1mm}
\item $|a_1'|>\mu+1$ on $\Upsilon_{l,a}$ for all $(l,a)\in I_{2n+1}$ and $a_1'$ vanishes nowhere on 
$\bigcup_{(l,a)\in I_{2n+1}}(\delta_{l,a}\cup \Upsilon_{l,a})$;
\vspace{1mm}
\item $dw'+\sum_{j=2}^n a_j'db_j'$ has no zeros on $\bigcup_{(l,a)\in I_{2n+1}}\delta_{l,a}$.
\end{enumerate}
Also, pick an initial point $u_0\in\mathring M_1$ and define $b_1''\colon S_{2n+1}\to\c$ as in Case 2. 
Thus, the map $\wt h_{2n+1}:=(a_1',b_1'',\ldots,a_n',b_n',w')\colon S_{2n+1}\to\c^{2n+1}$ satisfies the claim. 
Notice that {\rm (c)} and {\rm (d)} imply property {\rm (D4)}.

\smallskip
\noindent{\em Case 5: $(\sigma,\varsigma)=(2n+1,2)$.} Take a map $g'\colon S_{2n+1}\to\c^{2n+1}$ of class 
$\Ascr^1(S_{2n+1})$ meeting the following requirements: 
\begin{enumerate}[\rm (a)]
\item $g'=g$ on $S\cup \big( \bigcup_{(l,a)\in I_2} \overline \Omega_{l,a}\big)$;
\vspace{1mm}
\item $w'=w|_{S_{2n+1}}$;
\vspace{1mm}
\item $|b_1'|>\mu+1$ on $\Upsilon_{l,a}$ for all $(l,a)\in I_{2n+1}$ and $db_1'$ vanishes nowhere on 
$\bigcup_{(l,a)\in I_{2n+1}}(\delta_{l,a}\cup \Upsilon_{l,a})$;
%\vspace{0.1mm}
\item $dw'+\sum_{j=2}^n a_j'db_j'$ has no zeros on $\bigcup_{(l,a)\in I_{2n+1}}\delta_{l,a}$.
\end{enumerate}
Fix a point $u_0\in\mathring M_1$ and define $a_1''\colon S_{2n+1}\to\c$ as in Case 3. Again, the map 
$\wt h_{2n+1}:=(a_1'',b_1',\ldots,a_n',b_n',w')\colon S_{2n+1}\to\c^{2n+1}$ meets all the requirements. 
As in the previous case, requirements {\rm (c)} and {\rm (d)} ensure property {\rm (D4)}.
\end{proof}

With Claim \ref{cl:hi} in hand, and since $g$ has no constant component function, we may apply 
Lemmas \ref{lem:fixedcomponents} and \ref{lem:position} to obtain a holomorphic Legendrian embedding 
$h_\sigma\colon M_2\hra \c^{2n+1}$ with the following properties:
\begin{enumerate}[\rm ({E}1)]
\item $h_\sigma$ approximates $\wt h_\sigma$ in the $\Cscr^1(S_\sigma)$-topology;
\vspace{1mm}
\item $\pi_\sigma\circ h_\sigma=\pi_\sigma\circ g$.
\end{enumerate}
Thus, in view of properties {\rm (D3)} and {\rm (D1)}, if the approximation in property {\rm (E1)} is close enough, we have that:
\begin{enumerate}[\rm ({E}1)]
\item[\rm ({E}3)] $|\pi_\varsigma\circ h_\sigma|>\mu+1$ on $\Upsilon_{l,a}$ for all $(l,a)\in I_\sigma$;
\vspace{1mm}
\item[\rm ({E}4)] $h_\sigma$ has no constant component function.
\end{enumerate}
We claim that if $I_\varsigma=\emptyset$ then $\wt f:=h_\sigma$ satisfies the conclusion of Lemma \ref{lem:proper}. 
Indeed, properties {\rm (B1)} and {\rm (E1)} ensure that $\wt f$ approximates $f$ in the $\Cscr^1(M_1)$-topology. 
If on the other hand $I_\varsigma=\emptyset$, then $I_\sigma=I$ and hence
\[
     bM_2\subset \big( \bigcup_{(l,a)\in I_\sigma} \Upsilon_{l,a}\big)\cup 
     \big( \bigcup_{(l,a)\in I_\sigma} \lambda_{l,a}\big)
\]
and
\[  
    M_2\setminus\mathring M_1=\big( \bigcup_{(l,a)\in I_\sigma} \Upsilon_{l,a}\big)\cup
     \big( \bigcup_{(l,a)\in I_\sigma} \overline{\Omega_{l,a}\setminus \Upsilon_{l,a}}\big).
\]
Thus, properties {\rm (C2)}, {\rm (E2)}, and  {\rm (E3)} give condition {\rm (i)}, whereas properties 
{\rm (C1)}, {\rm (E2)}, and  {\rm (E3)} guarantee condition {\rm (ii)}.

Assume now that $I_\varsigma\neq\emptyset$. As above, for each $(l,a)\in I_\varsigma$, we take an arc 
$\delta_{l,a}\subset \overline{\Omega_{l,a}\setminus \Upsilon_{l,a}}$ with an endpoint in the relative interior of 
$\alpha_{l,a}$, the other endpoint in $\Upsilon_{l,a}\setminus \beta_{l,a}$, and otherwise disjoint from 
$\Upsilon_{l,a}\cup b\overline\Omega_{l,a}$, such that the set
\[
      S_\varsigma:=S\cup \big( \bigcup_{(l,a)\in I_\sigma} 
      \overline \Omega_{l,a}\big) \cup \big (\bigcup_{(l,a)\in I_\varsigma} \delta_{l,a}\cup \Upsilon_{l,a}\big)
\]
is admissible in $R$ and no component function of $h_\sigma$ has a zero or a critical point in $\bigcup_{(l,a)\in I_\sigma} \delta_{l,a}$.
Reasoning as above, in a symmetric way, we may construct a holomorphic Legendrian embedding $h_\varsigma\colon M_2\to \c^{2n+1}$
enjoying the following properties:
\begin{enumerate}[\rm ({F}1)]
\item $h_\varsigma$ approximates $h_\sigma$ in the $\Cscr^1(S_\varsigma)$-topology;
\vspace{1mm}
\item $\pi_\varsigma\circ h_\varsigma=\pi_\varsigma\circ h_\sigma$;
\vspace{1mm}
\item $|\pi_\sigma\circ h_\varsigma|>\mu+1$ on $\Upsilon_{l,a}$ for all $(l,a)\in I_\varsigma$.
\end{enumerate}

We claim that, if the approximations in properties {\rm (B1)}, {\rm (D1)}, {\rm (E1)}, and {\rm (F1)} are close enough, the Legendrian curve 
$\wt f:=h_\varsigma\colon M_2\to\c^{2n+1}$ satisfies the conclusion of the lemma. Indeed, the mentioned properties ensure that $\wt f$ 
approximates $f$ in the $\Cscr^1(M_1)$-topology. It remains to check conditions {\rm (i)} and {\rm (ii)}. 
Let $p\in M_2\setminus\mathring M_1$. If $p\in \Upsilon_{l,a}$ for some $(l,a)\in I_\sigma$, we have
\[
    |\pi_\varsigma(\wt f(p))| \stackrel{{\rm (F1)}}{\approx} |\pi_\varsigma(h_\sigma(p))|  \stackrel{{\rm (E3)}}{>}\mu+1,
\]
and together with property {\rm (F3)} we infer that
\begin{equation}\label{eq:1}
      \max\{|\pi_\sigma\circ \wt f|,|\pi_\varsigma\circ \wt f|\}>\mu+1\quad 
      \text{on $\bigcup_{(l,a)\in I}\Upsilon_{l,a}$}.
\end{equation}
On the other hand, if $p\in \overline{\Omega_{l,a}\setminus\Upsilon_{l,a}}$ for some $(l,a)\in I_\sigma$, we have
\begin{equation}\label{eq:2}
    |\pi_\sigma(\wt f(p))| \stackrel{{\rm (F1)}}{\approx} |\pi_\sigma(h_\sigma(p))|  
    \stackrel{{\rm (E2)}}{=}|\pi_\sigma(g(p))|,
\end{equation}
whereas if $p\in \overline{\Omega_{l,a}\setminus\Upsilon_{l,a}}$ for some $(l,a)\in I_\varsigma$, then
\begin{equation}\label{eq:3}
     |\pi_\varsigma(\wt f(p))| \stackrel{{\rm (F2)}}{=}|\pi_\varsigma(h_\sigma(p))|
     \stackrel{{\rm (E1),(D1)}}{\approx} |\pi_\varsigma(g(p))|.
\end{equation}
So, the estimates  \eqref{eq:2} and \eqref{eq:3} together with the inequalities \eqref{eq:1}, {\rm (C1)}, and {\rm (C2)}, and the facts that
\begin{eqnarray*}
      bM_2 & = & \bigcup_{(l,a)\in I} \beta_{l,a} \; \subset 
      \; \bigcup_{(l,a)\in I} \Upsilon_{l,a}\cup (\beta_{l,a}\setminus\Upsilon_{l,a}),
      \\
     M_2\setminus\mathring M_1 & = &  \bigcup_{(l,a)\in I} \overline\Omega_{l,a} \; =\;
      \bigcup_{(l,a)\in I} \Upsilon_{l,a}\cup \overline{\Omega_{l,a}\setminus\Upsilon_{l,a}},
\end{eqnarray*}
guarantee conditions {\rm (i)} and {\rm (ii)}. This concludes the proof.
\end{proof}

%
% Proof of the main theorem
%

\begin{proof}[Proof of Theorem \ref{th:proper}]
Up to adding  suitable arcs to the admissible set $S$ and extending $f$ to a generalized Legendrian curve in 
the arising admissible set (see Theorem \ref{th:path-approx}), we may assume that $S$ is connected. 
As in the proof of Lemma \ref{lem:fixedcomponents}, we may also assume that $K\neq\emptyset$. 

Let $M_0\subset R$ be a smoothly bounded compact connected domain such that $S\subset\mathring M_0$ and $S$ is a 
strong deformation retract of $M_0$. By Lemmas \ref{lem:fixedcomponents} and \ref{lem:position},
$f$ can be approximated  in the $\Cscr^1(S)$-topology by a holomorphic Legendrian embedding $f_0\colon M_0\to\c^{2n+1}$.
Moreover, up to a slight deformation of $f_0$, we may assume that $\max\{|\pi_\sigma\circ f_0|,|\pi_\varsigma\circ f_0|\}>\mu$ 
on $bM_0$ for some constant $\mu>0$.

Let $M_0\Subset M_1\Subset M_2 \Subset\ldots$ be a sequence of smoothly bounded compact connected Runge domains in 
$R$ such that:
\begin{itemize}
\item  $\bigcup_{k\geq 0} M_k=R$;
\vspace{1mm}
\item the Euler characteristic $\chi(M_k\setminus \mathring M_{k-1})\in \{0,-1\}$ for all $k\in \n$. 
\end{itemize}
Let us construct a sequence $\{f_k\colon M_k\to \c^{2n+1}\}_{k\in \n}$ of Legendrian embeddings meeting the following requirements
for every $k=1,2,\ldots$: 
\begin{enumerate}[\rm (i{$_k$})]
\item $f_k$ approximates $f_{k-1}$ uniformly on $M_{k-1}$.
\vspace{1mm}
\item $\max\{|\pi_\sigma\circ f_k|,|\pi_\varsigma\circ f_k|\}>\mu+k$ on $bM_k$.
\vspace{1mm}
\item $\max\{|\pi_\sigma\circ f_k|,|\pi_\varsigma\circ f_k|\}>\mu+k-1$ on $M_k\setminus\mathring M_{k-1}$.
\end{enumerate}
We reason in a recursive way. Observe that $f_0\colon M_0\to \c^{2n+1}$ satisfies  {\rm (ii$_0$)}, whereas the conditions
{\rm (i$_0$)} and {\rm (iii$_0$)} are vacuous. 
Assume we have found $f_0,\ldots,f_{k-1}$ satisfying the above properties, and let us construct the next map $f_k$.

If $\chi(M_k\setminus \mathring M_{k-1})=0$, then $M_{k-1}$ is a strong retract deformation of $M_k$.  Lemma \ref{lem:proper} 
applied to $M_{k-1}$, $M_k$, $\sigma,\varsigma\in\{1,\ldots,2n+1\}$, $\mu+k-1$, and $f_{k-1}\colon M_{k-1}\to \c^{2n+1}$  provides 
a holomorphic Legendrian embedding $f_k\colon M_{k}\hra \c^{2n+1}$ enjoying the desired properties {\rm (i$_k$)}, {\rm (ii$_k$)}, 
and {\rm (iii$_k$)}.

If $\chi(M_k\setminus \mathring M_{k-1})=-1$, there is a Jordan arc $\alpha \subset \mathring M_k$  with the endpoints 
in $b M_{k-1}$ and otherwise disjoint from $M_{k-1}$,  such that $S':= M_{k-1}\cup \alpha$ is an admissible subset of $R$ and 
a strong deformation retract of $M_k$. 
By Theorem \ref{th:path-approx} we may extend $f_{k-1}$  to a generalized Legendrian map $ f_{k-1}'\colon S' \to \c^{2n+1}$ satisfying 
\[
	\text{$\max\{|\pi_\sigma\circ f_{k-1}'|,|\pi_\varsigma\circ f_{k-1}'|\}>\mu+k-1$ on $\alpha$.}
\]
By Lemmas \ref{lem:fixedcomponents} and  \ref{lem:position} we can approximate $f_{k-1}'$  in the $\Ccal^1(S')$-topology by a  
holomorphic Legendrian embedding $\wt f_{k-1}\colon  M_{k}\to \c^{n+1}$. If the approximation is close enough,  
there is a  smoothly bounded compact domain $ M_{k-1}'$ such that 
$S'\subset \mathring M_{k-1}'\subset M_{k-1}'\subset \mathring M_k$,  $M_{k-1}'$ is a deformation retract of $M_k$, and
\[
	\text{$\max\{|\pi_\sigma\circ \wt f_{k-1}|,|\pi_\varsigma\circ \wt f_{k-1}|\}>\mu+k-1$ on $M_{k-1}'\setminus \mathring M_{k-1}$.}
\]
This reduces the proof to the previous case. This closes the induction.

If the approximations in condition {\rm (i$_k$)} are close enough, the sequence of Legendrian embeddings
$\{f_k\colon M_k\hra \c^{2n+1}\}_{k\in \n}$ converges uniformly on compacts in $R$ to a holomorphic Legendrian embedding 
$\wt f\colon R\hra \c^{2n+1}$. Condition {\rm (i$_k$)} for $k\in \z_+$ ensures that $\wt f|_S$  approximates $f_0$, and hence $f$, 
in the $\Cscr^1(S)$-topology. Finally, $\max\{|\pi_\sigma\circ \wt f|,|\pi_\varsigma\circ \wt f|\}>\mu+k$ on $R\setminus M_k$ for all $k$ 
provided that the approximations in condition {\rm (i$_j$)} are sufficiently close; take into account conditions {\rm (ii$_j$)} and {\rm (iii$_j$)}, $j\in \n$. 
In particular,  $\pi_{\sigma,\varsigma}\circ \wt f\colon R\to\c^2$ is a proper map.

This concludes the proof of Theorem \ref{th:proper}.
\end{proof}

%%%%%%%%%%
%%%%%%%%%%
%%%%%%%%%%
%%%%%%%%%% Section: Complete holomorphic Legendrian curves with Jordan boundaries
%%%%%%%%%%
%%%%%%%%%%

\section{Complete holomorphic Legendrian curves with Jordan boundaries}\label{sec:Jordan}

In this section we prove Theorem \ref{th:intro-Jordan} in the introduction. For simplicity of exposition we shall prove the theorem in 
the particular case when the source compact bordered Riemann surface $M$ is the closed unit disk $\cd\subset\c$ 
(see Theorem \ref{th:Jordan} below). We point out that the same proof applies in the general case; for that one simply uses 
Theorem \ref{th:RH} (the Riemann-Hilbert problem for holomorphic Legendrian curves normalized by any given bordered Riemann surface)
as opposed to Lemma \ref{lem:RH} (the Riemann-Hilbert problem for holomorphic Legendrian disks). We leave the details to the 
interested reader and refer to \cite[\textsection 4]{AlarconDrinovecForstnericLopez2015PLMS} where complete details are given 
for any bordered Riemann surface $M$ in a geometrically similar situation of conformal minimal immersions and holomorphic null curves.

We begin by pointing out the existence of a large family of Legendrian curves which are contained in certain complex affine hyperplanes 
of $\c^{2n+1}$. These curves will be crucial in the subsequent construction.

%
% Flat disks
%
\begin{proposition}\label{pro:flat}
Let $(a_1,b_1,\ldots,a_n,b_n)\in\c^{2n}$ $(n\in\n)$ be such that $a_1\cdots a_n\neq 0$  and denote by $\Pi\subset\c^{2n+1}$ 
the complex vectorial hyperplane 
\[
      \Pi:=\biggl\{(x_1,y_1,\ldots,x_n,y_n,z)\in\c^{2 n+1} : z+\sum_{j=1}^n (a_jx_j+b_jy_j)=0\biggr \}.
\]
Given a point $p_0=(x_{0,1},y_{0,1},\ldots,x_{0,n},y_{0,n},z_0)\in\c^{2 n+1}$, the map
\[
\c\ni\zeta\mapsto \Psi(\zeta)=\big(X_1(\zeta),Y_1(\zeta),\ldots,X_n(\zeta),Y_n(\zeta),Z(\zeta)\big)\in\c^{2 n+1}
\]
defined by
\begin{eqnarray*}
     Y_j(\zeta) & = & y_{0,j}+\zeta,
		\\
     X_j(\zeta) & = & \frac{x_{0,j}-b_j}{e^{y_{0,j}/a_j}} e^{Y_j(\zeta)/a_j}+b_j,
		\\
     Z(\zeta) & = & z_0+\sum_{j=1}^n \big(a_jx_{0,j}+b_jy_{0,j} -a_j X_j(\zeta)-b_j Y_j(\zeta)\big),\quad j=1,\ldots,n,
\end{eqnarray*}
is a proper holomorphic Legendrian embedding $\c\hra\c^{2n+1}$ such that $\Psi(0)=p_0$ and $\Psi(\c)\subset p_0+\Pi$. 
Moreover, $\Psi$ depends holomorphically on $p_0\in\c^{2n+1}$.
\end{proposition}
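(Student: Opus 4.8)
The plan is to verify the five asserted properties by direct computation, since the map $\Psi$ is given by explicit entire formulas. First I would record the two differential identities that make everything work: $dY_j=d\zeta$, and, differentiating the formula for $X_j$,
\[
	dX_j=\frac1{a_j}(X_j-b_j)\,d\zeta,\qquad j=1,\ldots,n.
\]
Here the division by $e^{y_{0,j}/a_j}$ and the exponent $Y_j/a_j$ are legitimate precisely because $a_j\neq 0$; the resulting $X_j$, and hence $Z$, are entire in $\zeta$ and jointly holomorphic in $(\zeta,p_0)$, which already gives the final assertion that $\Psi$ depends holomorphically on $p_0$. (The shape of the formulas is of course obtained by integrating the Legendrian equation $dz+\sum_j x_j\,dy_j=0$ restricted to the affine hyperplane $p_0+\Pi$ after parametrizing all the $y_j$ by the common variable $\zeta$; this is merely the motivation and need not enter the written proof.)

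Next I would check the three algebraic normalizations. Evaluating at $\zeta=0$ gives $Y_j(0)=y_{0,j}$, then $X_j(0)=x_{0,j}$, then $Z(0)=z_0$, so $\Psi(0)=p_0$. For the inclusion $\Psi(\c)\subset p_0+\Pi$, substitute the defining formula for $Z$ into the equation $(Z-z_0)+\sum_j\bigl(a_j(X_j-x_{0,j})+b_j(Y_j-y_{0,j})\bigr)=0$ of the hyperplane $p_0+\Pi$: the terms $\pm a_jX_j$ and $\pm b_jY_j$ cancel in pairs. For the Legendrian condition $\Psi^*\eta=dZ+\sum_jX_j\,dY_j$, plug the identities above into $dZ=\sum_j(-a_j\,dX_j-b_j\,dY_j)$; one gets $dZ=\sum_j\bigl(-(X_j-b_j)-b_j\bigr)d\zeta=-\sum_jX_j\,d\zeta=-\sum_jX_j\,dY_j$, so $\Psi^*\eta=0$.

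Finally I would argue that $\Psi$ is a proper embedding. The first coordinate of the form $Y_1(\zeta)=y_{0,1}+\zeta$ is an affine isomorphism $\c\to\c$; hence $\Psi$ is injective, and since the $Y_1$-entry of $\Psi'(\zeta)$ equals $1$, the map $\Psi$ is an injective holomorphic immersion. Moreover $|\Psi(\zeta)|\ge|Y_1(\zeta)|\to\infty$ as $|\zeta|\to\infty$, so $\Psi$ is proper; a proper injective immersion is an embedding, which completes the proof. There is no genuine obstacle here: the only subtlety is that an injective immersion of the noncompact Riemann surface $\c$ need not be an embedding in general, and this is precisely why one invokes properness — which is supplied for free by the affine coordinate $Y_1$.
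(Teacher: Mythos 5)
Your proposal is correct and follows essentially the same route as the paper's own (very terse) proof: a direct computation of $dX_j=\tfrac1{a_j}(X_j-b_j)\,d\zeta$ yielding $dZ+\sum_jX_j\,dY_j=0$, the evaluation at $\zeta=0$ and the cancellation giving $\Psi(\c)\subset p_0+\Pi$, and the observation that the affine coordinate $Y_1$ already makes $\Psi$ a proper injective immersion, hence an embedding. Your write-up merely spells out the computations that the paper declares "trivial" or "direct", so there is nothing to add.
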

\begin{proof}
Obviously, $\Psi$ is holomorphic and depends holomorphically on $p_0\in\c^{2n+1}$. A trivial computation shows that 
$dZ_j+\sum_{j=1}^n X_jdY_j=0$ everywhere on $\c$ and so $\Psi$ is Legendrian. Further, $Y_j\colon\c\to\c$ is a proper (linear) 
embedding for each $j\in \{1,\ldots,n\}$ and hence $\Psi$ is a proper embedding as well. Finally, the facts that $\Psi(0)=p_0$ 
and $\Psi(\c)\subset p_0+\Pi$ can be checked by direct computation.
\end{proof}

A holomorphic disk $f\colon \D\to\C^N$ $(N\in\n)$ is said to be {\em complete} if for any path $\gamma\colon [0,1)\to \D$ 
with $\lim_{t\to 1}|\gamma(t)|=1$ the path $f\circ \gamma\colon [0,1)\to \C^N$ has infinite length. Equivalently, denoting
by $ds^2$ the Euclidean metric on $\C^N$, the pull-back $f^*ds^2$ is a complete metric on $\D$. 

If $f\colon\cd\to\c^N$ is an immersion of class $\Ascr^1(\cd)$, we denote by $\dist_f\colon\cd\times\cd\to\r$ the distance on $\cd$ 
associated to the Riemannian metric $f^*ds^2$.

The following is the main result in this section.
%
% Main Theorem
%
\begin{theorem}\label{th:Jordan}
Every Legendrian curve $f\colon \cd\to\c^{2n+1}$ $(n\in\n)$ of class $\Ascr^1(\cd)$ may be approximated uniformly on $\cd$ 
by continuous injective maps $\wt f\colon\cd\to\c^{2n+1}$ such that $\wt f|_\d\colon\d\to\c^{2n+1}$ is a complete 
holomorphic Legendrian embedding.
\end{theorem}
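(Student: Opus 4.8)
The plan is to follow the now-standard recursive "exposing boundary points to infinity while killing the periods via Riemann--Hilbert" scheme, exactly as in \cite[proof of Theorem 1.1]{AlarconDrinovecForstnericLopez2015PLMS}, but with all the analytic input replaced by its Legendrian counterpart established earlier in the paper. First I would reduce matters, via Lemma \ref{lem:position}, to the situation in which the initial Legendrian disk $f$ is already an embedding of class $\Ascr^1(\cd)$ with no constant component; this is what allows later applications of the general position and Mergelyan lemmas. The goal is then to produce a sequence $f_k\colon\cd\to\c^{2n+1}$ of holomorphic Legendrian embeddings of class $\Ascr^1(\cd)$, converging uniformly on $\cd$, such that the limit map is injective on $\cd$, continuous on $\overline\d$, and complete on $\d$. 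Two competing quantities must be controlled along the sequence: the intrinsic distance $\dist_{f_k}(0,b\d)$ (which we push to $+\infty$) and the uniform distance $\|f_{k+1}-f_k\|_{0,\cd}$ (which we make summable and small enough, using a Cauchy-type argument on a suitable exhaustion $\cd=\bigcup_k\overline{\rho_k\d}$, that the limit is continuous up to the boundary and remains an embedding on $\d$ and injective on $\cd$).

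The inductive step has two substeps. In the first, completeness substep, I would use Lemma \ref{lem:RH} (the Riemann--Hilbert lemma for Legendrian disks): cover $b\d$ by finitely many short arcs, and over each arc attach a small Legendrian "finger" built from the flat Legendrian curves $\c\hra\c^{2n+1}$ of Proposition \ref{pro:flat} (translated so as to pass through the relevant boundary value and pointing into a generic affine hyperplane direction). Applying Lemma \ref{lem:RH} arc by arc, with the interpolation condition (iv) so that successive modifications do not destroy the estimates already achieved on the complementary arcs, yields a new holomorphic Legendrian disk $g$ that is $C^0$-close to $f_k$ on a slightly smaller disk, whose boundary is forced to traverse these long fingers, and which therefore satisfies $\dist_g(0,\overline{\rho_{k+1}\d}\setminus\overline{\rho_k\d})>1$, hence $\dist_g(0,b\d)>\dist_{f_k}(0,b\d)+1$; here one uses that the metric induced by $g$ dominates that induced by the one-dimensional projections, so long Euclidean excursions force long intrinsic length. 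In the second substep I invoke Lemma \ref{lem:position} once more to perturb $g$ slightly to a Legendrian embedding $f_{k+1}$, small enough in $\Cscr^1$ on $\overline{\rho_{k+1}\d}$ that the completeness gain and the injectivity on the already-fixed part are preserved. Note that because Lemma \ref{lem:RH} produces the $z$-component automatically by integrating $\int_0^\zeta G^*\eta$ (Lemma \ref{lem:approximate}), there are no period obstructions on the disk and the "period-killing" bookkeeping that appears for general bordered surfaces is trivial here — this is exactly why the disk case is presented separately.

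The passage to the limit then runs as in the cited references: the uniform smallness of $\|f_{k+1}-f_k\|_{0,\cd}$ gives a continuous limit $\wt f\colon\overline\d\to\c^{2n+1}$ with $\wt f|_\d$ holomorphic and Legendrian (Legendrian being a closed condition under local uniform convergence, since $\wt f^*\eta$ is the locally uniform limit of $f_k^*\eta=0$), while the $\Cscr^1$-closeness on each $\overline{\rho_k\d}$ keeps $\wt f|_\d$ an immersion and, combined with a Cauchy-estimate argument controlling the difference map $\delta\wt f(u,v)=\wt f(v)-\wt f(u)$ away from the diagonal, keeps it an embedding; the completeness estimates $\dist_{f_k}(0,b\d)\to\infty$ pass to the limit to give completeness of $\wt f|_\d$, and a standard argument (e.g. as in \cite{AlarconDrinovecForstnericLopez2015PLMS}) upgrades injectivity on $\d$ to injectivity of the continuous extension on $\overline\d$, so $\wt f(\overline\d)$ has a Jordan-type boundary. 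I expect the main obstacle to be the bookkeeping in the completeness substep: one must choose the arcs of $b\d$, the sizes of the Legendrian fingers, and the shrinking radii $\rho_k$ so that (a) the Riemann--Hilbert modification genuinely increases the intrinsic boundary distance by a definite amount on the new annulus, (b) it does not decrease it on the old part, and (c) the accumulated $C^0$-errors stay summable and small enough to preserve embeddedness and continuity up to the boundary — all of which is routine given Lemmas \ref{lem:RH} and \ref{lem:position} and Proposition \ref{pro:flat}, and is carried out in detail in \cite{AlarconDrinovecForstnericLopez2015PLMS}, so I would cite that work for the precise estimates rather than reproduce them.
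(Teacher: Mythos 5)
Your overall architecture (recursive scheme, Riemann--Hilbert via Lemma \ref{lem:RH} with the flat Legendrian curves of Proposition \ref{pro:flat} as boundary disks, general position via Lemma \ref{lem:position}, passage to the limit as in \cite{AlarconDrinovecForstnericLopez2015PLMS}) matches the paper's, and you correctly observe that on the disk there are no period obstructions. However, there is a genuine gap at the heart of your inductive step: you propose to increase $\dist(0,b\D)$ by a fixed amount ($+1$) at each stage while keeping $\|f_{k+1}-f_k\|_{0,\cd}$ \emph{summable}, and these two requirements are incompatible as stated. The completeness gain on the new annulus cannot be produced by oscillation alone: finitely many ``fingers'' only lengthen paths terminating near their attachment points, and for paths hitting the boundary in between one needs a genuine Riemann--Hilbert push of Euclidean size $\mu_k$ along the whole of each boundary arc, which moves the boundary values by $\approx\mu_k$ in the sup norm. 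Completeness forces $\sum_k\mu_k=\infty$, so the increments $\|f_{k+1}-f_k\|_{0,\T}$ are \emph{not} summable, and your Cauchy-type argument for continuity (and injectivity) of the limit up to $b\D$ collapses.

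The paper's resolution --- the one essential idea missing from your proposal --- is the Pythagorean/orthogonality mechanism of Lemma \ref{lem:Jordan0}: one fixes a reference map $\Ygot\colon\T\to\C^{2n+1}$ with $\|f-\Ygot\|_{0,\T}<\delta$ and performs the Riemann--Hilbert pushes inside the affine hyperplanes $F(u)+\Pi_j$, where $\Pi_j$ is Hermitian-orthogonal to $G(u_j)=F(u_j)-\Ygot(u_j)$ (this is precisely why Proposition \ref{pro:flat} is formulated for hyperplanes with $a_1\cdots a_n\neq0$ and why \eqref{eq:f1-Y1} is arranged). A push of size $\mu$ in such directions only worsens the boundary error to $\sqrt{\delta^2+\mu^2}$, so taking $\mu_j=c/j$ makes $d_j=d_{j-1}+c/j$ diverge while $\delta_j=\sqrt{\delta_{j-1}^2+c^2/j^2}$ stays bounded by $\sqrt{\delta_0^2+c^2\pi^2/6}$; uniform convergence on $\cd$ is then recovered from the boundary estimate and the maximum principle, not from summability of the increments. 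A second point you gloss over: the pushes in $\Pi_j$ necessarily degenerate to constants at the division points $u_j$ (condition {\rm (C6)}), so they do not lengthen paths terminating near those points; this is why the paper needs the preliminary deformation of Claim \ref{cla:Jordan1} (highly oscillating Legendrian arcs of \emph{small} Euclidean diameter, composed with a conformal diffeomorphism \`a la Forstneri{\v{c}}--Wold), whose length gain is measured by the projections $\pi_{G(u_a)}$ and therefore survives the subsequent orthogonal Riemann--Hilbert modification. Without both mechanisms, and the orthogonality bookkeeping tying them together, the simultaneous control of the boundary sup norm and of the intrinsic boundary distance does not go through.
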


Recall that $\t=b\cd=\{\zeta\in\c : |\zeta|=1\}$. Most of the technical part in the proof of Theorem \ref{th:Jordan} 
is provided by the following lemma.

%
% Technical lemma
%
\begin{lemma}\label{lem:Jordan0}
Let $f\colon \cd\to\c^{2n+1}$ $(n\in\n)$ be a Legendrian immersion of class $\Ascr^1(\cd)$, $\Ygot\colon \t\to\c^{2n+1}$ be a smooth map, 
and $\delta>0$ and $d>0$ be numbers such that the following conditions hold:
\begin{enumerate}[\rm (i)]
\item $\|f-\Ygot\|_{0,\t}<\delta$.
\vspace{1mm}
\item $\dist_f(0,\t)>d$.
\end{enumerate}
Given $\mu>0$, $f$ may be approximated uniformly on compacts in $\d$ by Legendrian embeddings $\wt f\colon\cd\hra\c^{2n+1}$ 
of class $\Oscr(\cd)$ satisfying the following properties:
\begin{enumerate}[\rm (I)]
\item $\|\wt f-\Ygot\|_{0,\t}<\sqrt{\delta^2+\mu^2}$.
\vspace{1mm}
\item $\dist_{\wt f}(0,\t)>d+\mu$.
\end{enumerate}
\end{lemma}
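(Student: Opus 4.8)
The idea is to graft onto $f$, near the boundary circle and over each of finitely many short arcs of $\t$, an intrinsically long but Euclidean--small holomorphic Legendrian ``finger'' built from a flat curve of Proposition \ref{pro:flat}, the grafting being performed by the Riemann--Hilbert technique of Lemma \ref{lem:RH}, and finally to make the outcome an embedding by Lemma \ref{lem:position}. \emph{Setup.} As at the start of the proof of Lemma \ref{lem:RH}, Mergelyan approximation lets us assume that $f$ is a holomorphic Legendrian immersion on a neighbourhood of $\cd$. Using that $f\in\Ascr^1(\cd)$ and (ii), fix $\rho_0\in(0,1)$ so close to $1$ that
\[
	\dist_f\bigl(0,\{\zeta : \rho_0\le|\zeta|<1\}\bigr)>d ,
\]
which holds for $\rho_0$ near $1$ since any path from $0$ into $\{\rho_0\le|\zeta|<1\}$ prolongs to a path to $\t$ of $f$--length larger by at most $C(1-\rho_0)$, $C=\|df\|_{0,\cd}$. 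Fix $\delta_0$ and $\nu$ with $\|f-\Ygot\|_{0,\t}<\delta_0<\delta$ and $\nu+\delta_0<\sqrt{\delta^2+\mu^2}$ (such $\nu>0$ exists because $\delta_0<\delta$), fix a small $\epsilon>0$ to be shrunk at the end, and split $\t$ into closed arcs $I_1,\dots,I_N$ with pairwise disjoint interiors, midpoints $\zeta_j$, and $N$ so large that $f$ and $\Ygot$ oscillate by less than $\epsilon$ on each $I_j$.

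\emph{The fingers.} For each $j$, Proposition \ref{pro:flat} provides flat Legendrian curves through any prescribed point; choosing its parameters $(a_i,b_i)$ with one $a_i$ purely imaginary of tiny modulus and the corresponding $X_i$ made to ``wind'' many times, one obtains for every basepoint $p$ near $f(\zeta_j)$ a proper holomorphic Legendrian embedding $\Psi_{j,p}\colon\c\hra\c^{2n+1}$, depending holomorphically on $p$, with $\Psi_{j,p}(0)=p$, together with a smoothly bounded simply connected neighbourhood $Q_j\subset\c$ of $0$, independent of $p$, such that $\Psi_{j,p}(Q_j)$ has Euclidean diameter $<\nu$ while the $\Psi_{j,p}$--distance from $0$ to $\partial Q_j$ exceeds $\mu+1$. (The point is that such a flat curve can complete many loops of radius $\approx\nu$ while its argument $\zeta$ stays in a thin strip, so its induced length grows linearly in the number of loops while its extrinsic size stays $\lesssim\nu$.) Now run Lemma \ref{lem:RH} once for each arc: put $f_0:=f$, and at step $j$ apply Lemma \ref{lem:RH} to $f_{j-1}$ with boundary data $u\mapsto F(u,\cdot)$ the constant disk $f_{j-1}(u)$ for $u$ outside a small subarc $I_j'\Subset I_j$, and $v\mapsto \Psi_{j,f_{j-1}(u)}(\varphi_j(v))$ for $u\in I_j'$, where $\varphi_j\colon\cd\to\overline{Q_j}$ is a conformal equivalence with $\varphi_j(0)=0$ (so $F(u,0)=f_{j-1}(u)$ and $v\mapsto F(u,v)$ is Legendrian). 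By property (iv) of Lemma \ref{lem:RH}, used with a neighbourhood $U_j$ of $I_j$ disjoint from all $I_k$, $k\neq j$, the curve $f_j$ is $\Cscr^1$--close to $f_{j-1}$ off $U_j$; hence the successive grafts do not interfere, and $\widetilde f_0:=f_N$ is a holomorphic Legendrian immersion on a neighbourhood of $\cd$, $\Cscr^1$--close to $f$ on $\{|\zeta|\le\rho_0\}$.

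\emph{The estimates and the embedding.} Closeness to $f$ on $\{|\zeta|\le\rho_0\}$ (property (i) at each step) gives at once that $\widetilde f_0$ approximates $f$ uniformly on compacts of $\d$ and that $\dist_{\widetilde f_0}(0,\{|\zeta|=\rho_0\})>d$. For (II), let $\gamma\colon[0,1)\to\d$ be divergent and split it at its last visit to $\{|\zeta|\le\rho_0\}$: the initial piece has $\widetilde f_0$--length $>d$, and the terminal piece runs through the annulus out to a point $u_0$ of some $I_j$; by properties (ii)--(iii) of Lemma \ref{lem:RH} the map $\widetilde f_0$ sends the outer radial segments near $u_0$ into a neighbourhood of the finger $\Psi_{j,f_{j-1}(u_0)}(\overline{Q_j})$, crossing it from a point near its base to a point near $\partial Q_j$, so — arguing exactly as in \cite[proof of Lemma 4.1]{AlarconDrinovecForstnericLopez2015PLMS} — the terminal piece has $\widetilde f_0$--length $>\mu$, whence $\dist_{\widetilde f_0}(0,\t)>d+\mu$. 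For (I), if $u\in I_j$ then by property (ii) the point $\widetilde f_0(u)$ is within $O(\epsilon)$ of a point $q\in\Psi_{j,f_{j-1}(u)}(\partial Q_j)$, so
\[
	|\widetilde f_0(u)-\Ygot(u)|\le |q-f_{j-1}(u)|+|f_{j-1}(u)-\Ygot(u)|+O(\epsilon)<\nu+\delta_0+O(\epsilon)<\sqrt{\delta^2+\mu^2}
\]
once $\epsilon$ is small enough. Finally, $\widetilde f_0$ is a holomorphic Legendrian immersion on a neighbourhood of $\cd$, so Lemma \ref{lem:position} (with $M=\cd$) furnishes holomorphic Legendrian embeddings of class $\Oscr(\cd)$ arbitrarily $\Cscr^1(\cd)$--close to it; any one of them, $\widetilde f$, that is close enough still satisfies (I), (II) and approximates $f$ on compacts of $\d$.

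\emph{Main obstacle.} The technical heart is the lower bound $\dist_{\widetilde f_0}(0,\t)>d+\mu$: one must show that \emph{every} path to the boundary, not merely radial ones, is lengthened by about $\mu$, which forces the combination of properties (i)--(iii) of Lemma \ref{lem:RH} to be carried out with care as in \cite{AlarconDrinovecForstnericLopez2015PLMS}; a secondary point is the construction of the fingers $\Psi_{j,p}$ from the rigid family of Proposition \ref{pro:flat}, arranging them to be simultaneously Euclidean--thin (for (I)), intrinsically long (for (II)) and genuinely Legendrian with the prescribed basepoint.
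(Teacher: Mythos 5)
Your toolkit is the right one (flat curves from Proposition \ref{pro:flat} fed into the Riemann--Hilbert Lemma \ref{lem:RH}, then Lemma \ref{lem:position} to get an embedding), but the proposal has two genuine gaps, the first of which is essentially a circularity.

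First, the ``fingers'' do not exist as described. You ask for a Legendrian disk $\Psi_{j,p}|_{\overline{Q_j}}$ of Euclidean diameter $<\nu$ whose intrinsic distance from $0$ to $\partial Q_j$ exceeds $\mu+1$, where $\nu<\sqrt{\delta^2+\mu^2}-\delta_0$ is forced to be much smaller than $\mu$ precisely in the regime that matters (when $\delta\gg\mu$, as happens along the iteration of Lemma \ref{lem:Jordan}). For the exponential curves of Proposition \ref{pro:flat} this is impossible: the linear component $Y_j(\zeta)=y_{0,j}+\zeta$ forces $\mathrm{diam}(Q_j)\le \mathrm{diam}\,\Psi_{j,p}(Q_j)<\nu$, and along the segment from $0$ in the direction $-a_j/|a_j|$ the modulus $|X_j-b_j|$ decreases, so $\int |X_j'|\,|d\zeta|\le |X_j(0)-b_j|\lesssim\nu$; adding the $Y$- and $Z$-contributions, that escape path has $\Psi_{j,p}$-length $O(\nu)$, not $>\mu+1$. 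An extrinsically thin but intrinsically long Legendrian disk is essentially the object the whole section is trying to build, so it cannot be assumed here. The paper instead takes the Riemann--Hilbert boundary disks of extrinsic radius equal to the \emph{full} $\mu$, but lying in affine translates of the hyperplane $\Pi_j$ Hermitian-orthogonal to $G(u_j)=F(u_j)-\Ygot(u_j)$; property (I) then comes from Pythagoras, $|F(u_j)+w-\Ygot(u_j)|=\sqrt{|G(u_j)|^2+|w|^2}\le\sqrt{\delta_0^2+\mu^2}$, not from the triangle inequality. This orthogonality is the entire reason the bound is $\sqrt{\delta^2+\mu^2}$ rather than $\delta+\mu$ (and why the recursion in Lemma \ref{lem:Jordan} converges).

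Second, property (II) fails for paths escaping through the gaps between consecutive grafting arcs. You set $F(u,\cdot)$ equal to the constant disk for $u\in I_j\setminus I_j'$, so by (i)--(iii) of Lemma \ref{lem:RH} the deformed curve stays uniformly close to the old one near any boundary point lying between $I_j'$ and $I_{j+1}'$, and a path diverging to such a point gains no length. The paper needs a whole preliminary deformation (Claim \ref{cla:Jordan1}): it attaches radial arcs at the division points $u_j$, extends $f$ there by highly oscillating Legendrian arcs of small diameter, and pulls back by a Forstneri\v c--Wold conformal map, so that paths ending near the $u_j$ already gain length $>2\mu$ --- measured via the projections $\pi_{G(u_j)}$ onto the directions orthogonal to the $\Pi_j$, so that the subsequent Riemann--Hilbert step (which displaces points only inside the $\Pi_j$) cannot undo this gain. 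Neither this gap-covering deformation nor the accompanying orthogonality bookkeeping appears in your argument.
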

\begin{proof}
We assume that $n=1$; the same proof applies in general.

Choose numbers $\epsilon_0>0$ and $0<r_0<1$. To prove the lemma it suffices to find a Legendrian embedding $\wt f\colon\cd\hra\c^3$ 
of class $\Oscr(\cd)$ satisfying properties {\rm (I)}, {\rm (II)}, and $\|\wt f-f\|_{0,r_0\cd}<\epsilon_0$. 

Write $f=(f_1,f_2,f_3)$ and $\Ygot=(\Ygot_1,\Ygot_2,\Ygot_3)$. 
By condition {\rm (ii)} and up to enlarging $r_0<1$ if necessary, there is  a number $d_0$ such that
\begin{equation}\label{eq:Jordanr}
      \dist_f(0,r_0\t)>d_0>d.
\end{equation}
Set $g=(g_1,g_2,g_3):=f-\Ygot\colon\t\to\c^3$ and, up to slightly deforming $\Ygot$, assume that
\begin{equation}\label{eq:f1-Y1}
       \text{$g_1g_3\colon\t\to\c$ has no zeros}.
\end{equation}
By dimension reasons, this can be  achieved without loss of generality. In particular, $g_1$ and $g_3$ 
(and hence $g$) vanish nowhere on $\t$. Further, condition {\rm (i)} gives a number $\delta_0$ such that
\begin{equation}\label{eq:Jordandelta0}
      \|f-\Ygot\|_{0,\t}<\delta_0<\delta.
\end{equation}

Pick a number $\epsilon>0$ which will be specified later. Let $m\in\n$ be a large enough integer such that, setting 
\[
    \alpha_j:=\left\{e^{\imath 2\pi t} : t \in\Big[\frac{j-1}{m}, \frac{j}{m}\Big]\right\}\subset\t,\quad j\in\z_m,
\]
the following estimates are satisfied for all $u,u'\in\alpha_j$ and $j\in\z_m$:
\begin{equation}\label{eq:f-Y}
|f(u)-f(u')|<\epsilon,\quad |f(u)-\Ygot(u')|<\delta_0,\quad |\Ygot(u)-\Ygot(u')|<\epsilon.
\end{equation}
Such $m$ exists in view of \eqref{eq:Jordandelta0} and continuity of $f$ and $\Ygot$. For every $j\in\z_m$ we set 
\begin{equation}\label{eq:endpoints}
      u_j:=e^{\imath 2\pi \frac{j}{m}};
\end{equation}
hence $u_{j-1}$ and $u_j$ are the endpoints of the arc $\alpha_j\subset \T$.

Given $w\in \c^3\setminus\{0\}$ we denote by
\begin{equation}\label{eq:pi_w}
\pi_w\colon\c^3\to \c w=\{\zeta w : \zeta\in\c\}\subset\c^3
\end{equation}
the Hermitian orthogonal projection onto the complex line $\c w$. The proof of the lemma consists of two different deformation 
procedures. The first one is provided by the following.

%
% First deformation
%
\begin{claim}\label{cla:Jordan1}
Let $\epsilon,\delta_0,r_0$ and $\mu$ be as above.
For any $\epsilon'>0$ there exists a Legendrian embedding $F=(F_1,F_2,F_3)\colon\cd\hra \c^3$ of class $\Oscr(\cd)$ 
satisfying the following conditions.
\begin{enumerate}[\rm ({A}1)]
\item $\|F-f\|_{1,r_0\cd}<\epsilon'$.
\vspace{1mm}
\item $|F(u)-f(u')|<\epsilon$ and $|F(u)-\Ygot(u')|<\delta_0$ for all $u,u'\in\alpha_j$ and $j\in\z_m$.
\vspace{1mm}
\item Setting $G=(G_1,G_2,G_3):=F-\Ygot\colon\t\to\c^3$, the function $G_1G_3\colon\t\to\c$ has no zeros.
\vspace{1mm}
\item If $\gamma\subset\d$ is an arc with the initial point in 
$r_0\cd$ and the final point $u_j\in\T$ (see \eqref{eq:endpoints})
for some $j\in\z_m$, and if $\{J_a\}_{a\in\z_m}$ is any partition of $\gamma$ by Borel measurable subsets, then
\[
     \sum_{a\in\z_m}\length \big( \pi_{G(u_a)}(F(J_a)) \big)>2\mu.
\]
\end{enumerate}
\end{claim}
\begin{proof}
For every $j\in\z_m$, let $u_j\in\T$ be given by \eqref{eq:endpoints} and set 
\[
	\gamma_j:=\{t u_j : t\in[1,2]\}\subset \c.
\]
In $\c$ we consider the admissible subset 
\[
     S:=\cd\cup\big( \bigcup_{j\in\z_m} \gamma_j\big).
\]
Choose a number $c>0$ to be specified later. Let $\wt h=(\wt h_1,\wt h_2,\wt h_3) : S\to\c^3$ be a generalized Legendrian 
curve satisfying the following requirements:
\begin{enumerate}[\rm ({a}1)]
\item $\wt h=f$ on $\cd$;
\vspace{1mm}
\item $|\wt h(u)-f(u_j)|<c$ for all $u\in\gamma_j$, $j\in\z_m$;
\vspace{1mm}
\item $(\wt h_1(u)-\Ygot_1(u_j))(\wt h_3(u)-\Ygot_3(u_j))\neq 0$ for all $u\in\gamma_j$, $j\in\z_m$;
\vspace{1mm}
\item If $\{J_a\}_{a\in\z_m}$ is any partition of $\gamma_j$, $j\in \z_m$, 
by Borel measurable subsets, then
\[
     \sum_{a\in\z_m}\length \big( \pi_{g(u_a)}(\wt h(J_a)) \big) >2\mu.
\] 
\end{enumerate}
The existence of such $\wt h$ is ensured by \eqref{eq:f1-Y1}, \eqref{eq:f-Y}, and the fact that every compact path in $\c^3$ may be 
uniformly approximated by Legendrian paths (see Theorem \ref{th:path-approx}). Indeed, by \eqref{eq:f1-Y1} we have that $g(u_j)\neq 0$ 
for all $j\in\z_m$ and hence there is $w_0\in\c^3$ such that $\langle w_0,g(u_j)\rangle\neq 0$ for all $j\in\z_m$. (For instance, 
$w_0=(1,0,0)$ or $w_0=(0,0,1)$ meet this requirement since $g_1(u_j)g_3(u_j)\neq 0$ for all $j\in\z_m$.) 
Thus, it suffices to define $\wt h$ on each arc $\gamma_j$ as a Legendrian path which is highly oscillating in the direction of $w_0$ 
(which ensures condition {\rm (a4)}) but with very small diameter in $\c^3$ (which guarantees conditions {\rm (a2)} and {\rm (a3)}; 
take into account \eqref{eq:f1-Y1} for the latter). Choosing $c>0$ sufficiently small, \eqref{eq:f-Y} and condition {\rm (a2)} give that
\begin{enumerate}[\rm ({a}1)]
\item[\rm (a5)] $|\wt h(u)-f(u')|<\epsilon$ and $|\wt h(u)-\Ygot(u')|<\delta_0$ for all 
$(u,u')\in(\gamma_{j-1}\cup\alpha_j\cup\gamma_j)\times \alpha_j$, $j\in\z_m$.
\end{enumerate}

By Lemmas \ref{lem:fixedcomponents} and \ref{lem:position} we may approximate $\wt h$ in the $\Cscr^1(S)$-topology by Legendrian
embeddings $h=(h_1,h_2,h_3)\colon 3\cd\hra \c^{2n+1}$. If the approximation of $\wt h$ by $h$ is close enough, conditions {\rm (a4)} 
and {\rm (a5)} guarantee the existence of numbers 
\begin{equation}\label{eq:numbers}
       0<\tau<\lambda'<\lambda<\rho<1-r_0
\end{equation}
such that the following properties are satisfied (see Figure \ref{fig:Claim64}):
\begin{enumerate}[\rm ({b}1)]
\item $(\gamma_j+\tau\cd)\cap\cd\subset u_j+\lambda'\cd$, $j\in \z_m$;
\vspace{1mm}
\item $\big( (\gamma_j+\tau\cd)\cup (u_j+\lambda\cd) \big) \cap \big((\gamma_i+\tau\cd)\cup(u_i+\lambda\cd) \big)=\emptyset$ 
for all $i\neq j\in\z_m$;
\vspace{1mm}
\item $|h(u)-f(u')|<\epsilon$ and $|h(u)-\Ygot(u')|<\delta_0$ for all 
$(u,u')\in\big((\gamma_{j-1}+\tau\cd)\cup (u_{j-1}+\lambda\cd)\cup\alpha_j\cup(\gamma_j+\tau\cd)\cup (u_j+\lambda\cd)\big)\times \alpha_j$, 
$j\in\z_m$;
\vspace{1mm}
\item $(h_1(u)-\Ygot_1(u'))(h_3(u)-\Ygot_3(u'))\neq 0$ for all 
$(u,u')\in \big((u_j+\lambda\cd)\cup(\gamma_j+\tau\cd)\big)\times \big((u_j+\lambda\cd)\cap\t\big)$, $j\in\z_m$;
\vspace{1mm}
\item if $\gamma_j'\subset(u_j+\lambda\d)\cup(\gamma_j+\tau\cd)$ is an arc with the initial point in $u_j+\lambda\d$ and the final point 
$2u_j$, and if $\{J_a\}_{a\in\z_m}$ is any partition of $\gamma_j'$, $j\in \z_m$, by Borel measurable subsets, then
\[
     \sum_{a\in\z_m}\length \big(\pi_{g(u_a)}(h(J_a)) \big)>2\mu;
\]
\item $|h(u)-f(u')|<c$ for all $(u,u')\in \big((u_j+\lambda\cd)\cup(\gamma_j+\tau\cd)\big)\times \big((u_j+\lambda\cd)\cap\t\big)$, $j\in\z_m$.
\end{enumerate}
\begin{figure}[ht]
    \begin{center}
 \scalebox{0.16}{\includegraphics{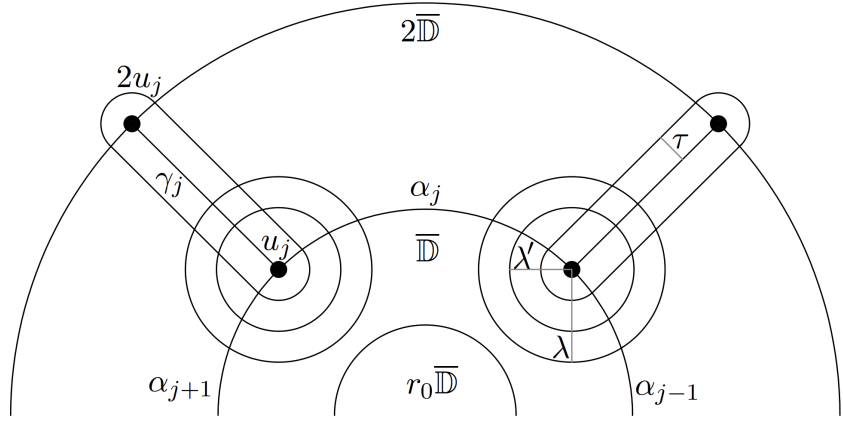}}
     \end{center}
\caption{Sets in the proof of Claim \ref{cla:Jordan1}.}
\label{fig:Claim64}
\end{figure}

Now, by Forstneri\v c and Wold \cite[Theorem 2.3]{ForstnericWold2009JMPA}, there is a smooth diffeomorphism 
$\phi\colon\cd\to\phi(\cd)$ satisfying the following conditions:
\begin{enumerate}[\rm ({c}1)]
\item $\phi(\cd)\subset S+\rho\d$.
\vspace{1mm}
\item $\phi|_\d\colon\d\to\phi(\d)$ is a biholomorphism.
\vspace{1mm}
\item $\phi$ is as close as desired to the identity map in the $\Cscr^1\big(\cd\setminus\bigcup_{j\in\z_m} (u_j+\lambda'\cd)\big)$-topology.
\item $\phi(u_j)=2u_j$ and $\phi\big(\cd\cap (u_j+\lambda'\cd)\big)\subset (u_j+\lambda\cd)\cup(\gamma_j+\tau\cd)$, $j\in\z_m$.
\end{enumerate}

If the approximation of $\wt h$ by $h$ and the one in condition {\rm (c3)} are close enough, then the Legendrian map $F:=h\circ \phi\colon\cd\to\c^3$,
which may be assumed to be an embedding of class $\Oscr(\cd)$ by Lemmas \ref{lem:fixedcomponents} and \ref{lem:position}, meets 
the conclusion of the claim. Indeed, condition {\rm (A1)} clearly follows if the mentioned approximations are sufficiently close. Moreover, 
condition {\rm (A2)} is ensured by conditions {\rm (b3)} and {\rm (c4)}. If $c>0$ is chosen sufficiently small, then, in view of 
conditions {\rm (b6)} and {\rm (c4)}, 
$G=F-\Ygot$ is so close to $g=f-\Ygot$ on $\t$ that \eqref{eq:f1-Y1} ensures condition {\rm (A3)}. Likewise, if $\gamma\subset\d$ is an arc 
with the initial point in $r_0\cd$ and the final point in $u_j$ for some $j\in\z_m$, and if $\{J_a\}_{a\in\z_m}$ is any partition of $\gamma$ 
by Borel measurable subsets, then conditions {\rm (b5)}, {\rm (c3)}, and {\rm (c4)} guarantee that
\[
     \sum_{a\in\z_m}\length \big( \pi_{g(u_a)}(F(J_a)) \big)>2\mu,
\]
and, if $c>0$ is sufficiently small, $G(u_a)$ is so close to $g(u_a)$ for all $a\in\z_m$ so that the above inequality gives condition {\rm (A4)}. 
This concludes the proof.
\end{proof}

Fix $\epsilon'>0$ to be specified later and let $F$ be given by Claim \ref{cla:Jordan1}. By condition {\rm (A4)}, there exists a number $r$
with $0<r<1-r_0$ enjoying the following property.
\begin{enumerate}[\rm ({A}1)]
\item[\rm ({A}5)] If $\gamma\subset\d$ is an arc with the initial point in $r_0\cd$ and the final point in $u_j+r\cd$ for some $j\in\z_m$, 
and if $\{J_a\}_{a\in\z_m}$ is any partition of $\gamma$ by Borel measurable subsets, then
\[
     \sum_{a\in\z_m}\length \big(\pi_{G(u_a)}(F(J_a)) \big)>2\mu.
\]
\end{enumerate}

Fix another constant $\epsilon''>0$ to be specified later. In view of {\rm (A2)}, the continuity of $F$ provides numbers 
$0<\varrho'<\varrho<r$  such that, setting for each $j\in\z_m$
\begin{eqnarray*}
    \alpha_j' & := &\alpha_j\setminus \big((u_{j-1}+\varrho\d)\cup (u_j+\varrho\d)\big) 
    \; \subset \; \alpha_j \; \subset \; \t,
    \\
    D_j & : = & (\alpha_j'+\varrho'\cd)\cap\cd \; \subset \; \cd\setminus r_0\cd
\end{eqnarray*}
\begin{figure}[ht]
    \begin{center}
 \scalebox{0.11}{\includegraphics{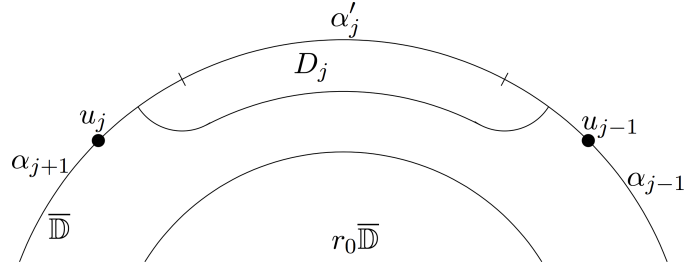}}
     \end{center}
\caption{The sets $\alpha_j'$ and $D_j$.}
\label{fig:Dj}
\end{figure}
(recall that $r<1-r_0$ and see Figure \ref{fig:Dj}), the following conditions are satisfied:
\begin{enumerate}[\rm ({B}1)]
\item $|F(u)-f(u')|<\epsilon$ and $|F(u)-\Ygot(u')|<\delta_0$ for all $(u,u')\in D_j\times\alpha_j$, $j\in\z_m$.
\vspace{1mm}
\item $|F(u/|u|)-F(u)|<\epsilon''$ for all $u\in D_j$, $j\in\z_m$.
\end{enumerate}

Now, for each $j\in\z_m$ let $\Pi_j\subset \c^3$ denote the vectorial complex $2$-plane Hermitian orthogonal to the vector 
\[
	G(u_j)=F(u_j)-\Ygot(u_j)\in\c^3\setminus\{(x,y,z)\in\c^3 : xz=0\}
\]
(see condition {\rm (A3)}):
\begin{eqnarray*}
     \Pi_j & = & \Big\{(V_1,V_2,V_3)\in\c^3 : 
     \overline{G_1(u_j)} V_1+\overline{G_2(u_j)} V_2+\overline{G_3(u_j)} V_3=0\Big\}
     \\
     & = & \Big\{(V_1,V_2,V_3)\in\c^3 : 
     \overline{G_1(u_j)/G_3(u_j)} V_1+\overline{G_2(u_j)/G_3(u_j)} V_2+ V_3=0\Big\}.
\end{eqnarray*}
Since $G_1(u_j)/G_3(u_j)\neq 0$ (see condition {\rm (A3)}), Proposition \ref{pro:flat} furnishes a continuous map 
$\Psi_j\colon (D_j\cap\t)\times\c\to\c^3$ satisfying the following conditions for all $u\in D_j\cap\t$:
\begin{itemize}
\item $\Psi_j(u,0)=F(u)$.
\vspace{1mm}
\item $\Psi_j(u,\cdot)\colon\c\to\c^3$ is a proper holomorphic Legendrian embedding.
\vspace{1mm}
\item $\Psi_j(u,\c)\subset F(u)+\Pi_j$.
\end{itemize}
Thus, by the Maximum Principle, suitably shrinking and reparametrizing the curves $\Psi_j(u,\cdot)$, $u\in D_j\cap\t$, 
we obtain a continuous map $H_j\colon (D_j\cap\t)\times\cd\to\c^3$ enjoying the following properties:
\begin{enumerate}[\rm ({C}1)]
\item $H_j(u,0)=F(u)$ for all $u\in D_j\cap\t$.
\vspace{1mm}
\item $H_j(u,\cdot)\colon\cd\to\c^3$ is a Legendrian disk of class $\Oscr(\cd)$ for all $u\in D_j\cap\t$.
\vspace{1mm}
\item $H_j(u,\cd)\subset F(u)+\Pi_j$ for all $u\in D_j\cap\t$.
\vspace{1mm}
\item $|H_j(u,\zeta)-F(u)|=\mu$ for all $u\in \alpha_j'$ and $\zeta\in\t$.
\vspace{1mm}
\item $|H_j(u,\zeta)-F(u)|\le\mu$ for all $u\in D_j\cap\t$ and $\zeta\in\cd$.
\vspace{1mm}
\item If $u\in \t$ is an endpoint of the arc $D_j\cap\t$ then $H_j(u,\zeta)=F(u)$ for all $\zeta\in\cd$.
\end{enumerate}
Consider the continuous map $H\colon\t\times\cd\to\c^3$ given by
\[
      H(u,\zeta)=\left\{
      \begin{array}{ll}
      H_j(u,\zeta) & \text{if $u\in D_j\cap\t$ for some $j\in\z_m$,}
      \\
      F(u) & \text{if $u\in \t\setminus\bigcup_{j\in\z_m} D_j$}.
      \end{array}
      \right.
\]

Given $\epsilon'''>0$ to be specified later, Lemma \ref{lem:RH} furnishes a Legendrian disk $\wt f\colon\cd\to\c^3$ of class 
$\Ascr^1(\cd)$ such that the following conditions hold:
\begin{enumerate}[\rm ({D}1)]
\item $\dist(\wt f(u),H(u,\t))<\epsilon'''$ for all $u\in\t$.
\vspace{1mm}
\item $\dist(\wt f(u),H(u/|u|,\cd))<\epsilon'''$ for all $u\in \bigcup_{j\in\z_m} D_j$.
\vspace{1mm}
\item $\|\wt f-F\|_{1,\cd\setminus \bigcup_{j\in\z_m} D_j}<\epsilon'''$.
\end{enumerate}
Moreover, by Lemmas \ref{lem:fixedcomponents} and \ref{lem:position}, we may assume that $\wt f$ is a Legendrian embedding 
of class $\Oscr(\cd)$. 

We claim that if the positive numbers $\epsilon$, $\epsilon'$, $\epsilon''$, and $\epsilon'''$ are chosen sufficiently small, the 
Legendrian embedding $\wt f$ satisfies the conclusion of the lemma.  Indeed, {\rm (A1)}, {\rm (D3)}, and the fact that 
$r_0\cd\subset \cd\setminus \bigcup_{j\in\z_m} D_j$ ensure that $\|\wt f-f\|_{1,r\cd}<\epsilon'+\epsilon'''<\epsilon_0$, provided 
that $\epsilon'>0$ and $\epsilon'''>0$ are so small to satisfy the latter inequality. To check condition {\rm (I)} pick a point $u\in \t$. 
Assume first that $u\in \t\setminus \bigcup_{j\in\z_m} D_j$. Then,
\begin{eqnarray*}
     |\wt f(u)-\Ygot(u)| & \stackrel{{\rm (D3)}}{<} & |F(u)-\Ygot(u)|+\epsilon'''
     \\
     & \stackrel{{\rm (A2)}}{<} & |f(u)-\Ygot(u)|+\epsilon'''+\epsilon'
     \\
     & \stackrel{\eqref{eq:Jordandelta0}}{<} & \delta_0+\epsilon'''+\epsilon' \; <\; \sqrt{\delta^2+\mu^2},
\end{eqnarray*}
where the last inequality holds provided that $\epsilon'>0$ and $\epsilon'''>0$ are small enough. Assume that, on the contrary, 
$u\in \t\cap D_j$ for some $j\in\z_m$. In this case, there exists $v_u\in\t$ and $w_u\in\Pi_j$ with $|w_u|\le\mu$ such that
\begin{eqnarray*}
     |\wt f(u)-\Ygot(u)| & \stackrel{{\rm (D1)},\eqref{eq:f-Y}}{<} & |H(u,v_u)-\Ygot(u_j)|+\epsilon'''+\epsilon
     \\
     & = & |H_j(u,v_u)-\Ygot(u_j)|+\epsilon'''+\epsilon
     		 \\
     & \stackrel{{\rm (C3)}, {\rm (C5)}}{=} & |F(u)+w_u-\Ygot(u_j)|+\epsilon'''+\epsilon
		 \\
     & \stackrel{{\rm (A2)},\eqref{eq:f-Y}}{\le} & |F(u_j)+w_u-\Ygot(u_j)|+\epsilon'''+4\epsilon
     \\
     & \stackrel{{\rm (C3)}}{=} & \sqrt{|F(u_j)-\Ygot(u_j)|^2+|w_u|^2}+\epsilon'''+4\epsilon
     \\
     & \stackrel{{\rm (A2)}}{<} & \sqrt{\delta_0^2 + \mu^2}+\epsilon'''+4\epsilon \; \stackrel{\eqref{eq:Jordandelta0}}{<} \; \sqrt{\delta^2+\mu^2}
\end{eqnarray*}
where the last inequality holds assuming that $\epsilon>0$ and $\epsilon'''>0$ are sufficiently small. This proves {\rm (I)}. 

Finally, let us check condition {\rm (II)}. If $\epsilon'>0$ and $\epsilon'''>0$ are chosen small enough, \eqref{eq:Jordanr}, condition {\rm (A1)}, 
and {\rm (D3)} guarantee that $\dist_{\wt f}(0,r_0\t)>d_0>d$, and hence it suffices to prove that $\dist_{\wt f}(r_0\t,\t)\ge \mu-(d_0-d)$;
equivalently, $\length(\wt f(\gamma))\ge \mu-(d_0-d)$ for all paths $\gamma\subset \cd\setminus r_0\d$ with the initial point in $r_0\t$ 
and the final point in $\t$. Let $\gamma$ be such a path. 

Assume first that $\gamma\cap (u_j+r\cd)=\emptyset$ for all $j\in\z_m$. In this case there is $j\in\z_m$ and a subarc $\wt\gamma$ of 
$\gamma$ such that $\wt\gamma\subset D_j\setminus\big((u_{j-1}+r\cd)\cup (u_j+r\cd) \big)$, the initial point $u$ of $\wt\gamma$ lies in 
$(bD_j)\cap\d$ and its final point $u'$ lies in $\alpha_j'\subset D_j\cap\t$. We have
\begin{eqnarray*}
     \length\bigl(\wt f(\gamma)\bigr) & > & \length\bigl(\wt f(\wt \gamma)\bigr)
     \\
     & \ge & |\wt f(u)-\wt f(u')|
     \\
     & \stackrel{{\rm (D3),\,(D1)}}{\ge} & |F(u)-H(u',v_u)|-2\epsilon'''\quad \text{for some $v_u\in\t$}
		\\
     & \stackrel{\eqref{eq:f-Y},{\rm (A2)}}{\ge} & |F(u')-H(u',v_u)|-2\epsilon'''-3\epsilon
		\\
     & \stackrel{{\rm (C4)}}{=} & \mu-2\epsilon'''-3\epsilon \; > \; \mu-(d_0-d),
\end{eqnarray*} 
where the last inequality holds provided that $\epsilon>0$ and $\epsilon'''>0$ are chosen small enough. 

Assume now that $\gamma\cap (u_j+r'\cd)\neq\emptyset$ for some $j\in\z_m$. Note that, since $\Pi_a$ is perpendicular to 
the vector $G(u_a)=F(u_a)-\Ygot(u_a)$ for all $a\in\z_m$, there exists for each $u\in D_a$ a point $v=v_u\in\cd$ such that
\begin{multline}\label{piG-piG}
       \big|\pi_{G(u_a)}(\wt f(u))-\pi_{G(u_a)}(F(u))\big|  = \big|\pi_{G(u_a)}(\wt f(u)-F(u)) \big|
	\\
	\stackrel{{\rm (D2)},\, {\rm (B2)}}{<} 
       \Big|\pi_{G(u_a)}\Big(H\big(\frac{u}{|u|},v\big)-F\big(\frac{u}{|u|}\big)\Big)\Big|+
       \epsilon''+\epsilon'''  \stackrel{{\rm (C3)}}{=}  \epsilon''+\epsilon'''.  
\end{multline}
Then, we have
\begin{eqnarray*}
       \length\bigl(\wt f(\gamma)\bigr) & = &   \length\bigl(\wt f(\gamma\setminus \bigcup_{a\in \z_m} D_a)\bigr) +
       \sum_{a\in\z_m}\length\bigl(\wt f(\gamma \cap D_a)\bigr)
       \\
       & \ge &  \length\bigl( \wt f(\gamma\setminus \bigcup_{a\in \z_m} D_a) \bigr)+
       \sum_{a\in\z_m}\length\bigl(\pi_{G(u_a)} (\wt f(\gamma \cap D_a))\bigr),
\end{eqnarray*}
and hence, in view of condition {\rm (D3)} and \eqref{piG-piG}, the length of $\wt f(\gamma)$ is greater or equal than
\[
	\length\bigl(F(\gamma\setminus \bigcup_{a\in \z_m} D_a)\bigr) +
       \sum_{a\in\z_m}\length\bigl(\pi_{G(u_a)} (F(\gamma \cap D_a))\bigr) - O(\epsilon''+\epsilon''').
\]
Thus, condition {\rm (A5)} ensures that $\length\bigl(\wt f(\gamma)\bigr)>\mu$ provided that $\epsilon''>0$ and $\epsilon'''$ are sufficiently small. 
This proves condition {\rm (II)} and concludes the proof of Lemma \ref{lem:Jordan0}.
\end{proof}

%
% Main Lemma
%

With Lemma \ref{lem:Jordan0} in hand, one can prove the following approximation result in the same way that 
\cite[Lemma 4.2]{AlarconDrinovecForstnericLopez2015PLMS} enables to prove 
\cite[Lemma 4.1]{AlarconDrinovecForstnericLopez2015PLMS}. As above, although we state it just for the disk, 
the next lemma also holds for any compact bordered Riemann surface.

%
% Main Lemma
%
\begin{lemma}\label{lem:Jordan}
Let $f\colon \cd\to\c^{2n+1}$ $(n\in\n)$ be a Legendrian curve of class $\Ascr^1(\cd)$. Given $\lambda>0$, $f$ may be approximated
uniformly on $\cd$ by Legendrian embeddings $\wt f\colon\cd\hra \c^{2n+1}$ of class $\Oscr(\cd)$ such that 
$\dist_{\wt f}(0,\t)>\lambda$.
\end{lemma}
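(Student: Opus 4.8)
The plan is to obtain Lemma \ref{lem:Jordan} from Lemma \ref{lem:Jordan0} by a finite recursion, in the spirit of the proof of \cite[Lemma 4.1]{AlarconDrinovecForstnericLopez2015PLMS} from the Riemann--Hilbert building block. The key point that makes the recursion \emph{finite} (and hence keeps the limit holomorphic on the closed disk $\cd$) is that each application of Lemma \ref{lem:Jordan0} increases the intrinsic boundary distance $\dist_{\wt f}(0,\t)$ by a definite amount $\mu$, so finitely many steps suffice to exceed the prescribed bound $\lambda$. First I would fix the given Legendrian curve $f\colon\cd\to\c^{2n+1}$ of class $\Ascr^1(\cd)$ and, by Lemmas \ref{lem:fixedcomponents} and \ref{lem:position}, replace it by a nearby Legendrian embedding of class $\Oscr(\cd)$; this allows us to assume from the outset that the core map is holomorphic on a neighborhood of $\cd$ and is an immersion, which is required to form the induced metric and the distances $\dist_{\wt f}$.

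Next I would set up the recursion. Let $d_0:=\dist_f(0,\t)>0$ and choose an integer $m$ with $d_0+m>\lambda$. Define $\Ygot_0:=f|_\t$ and $\delta_0:=\|f-\Ygot_0\|_{0,\t}=0$; more usefully, I would start with some small initial $\delta_0>0$ and the map $\Ygot_0=f|_\t$, and set $d_0=\dist_f(0,\t)$. I then produce inductively a sequence of Legendrian embeddings $f_k\colon\cd\hra\c^{2n+1}$ of class $\Oscr(\cd)$, together with numbers $\delta_k>0$ and $d_k>0$, by applying Lemma \ref{lem:Jordan0} at each step with the same fixed $\Ygot:=f|_\t$, with the data $(\delta_{k-1},d_{k-1})$ and a carefully chosen parameter $\mu=\mu_k$. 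The conclusions (I) and (II) of that lemma give
\[
\|f_k-\Ygot\|_{0,\t}<\sqrt{\delta_{k-1}^2+\mu_k^2}=:\delta_k,\qquad \dist_{f_k}(0,\t)>d_{k-1}+\mu_k=:d_k,
\]
while the approximation ``uniformly on compacts in $\d$'' lets me control $\|f_k-f_{k-1}\|_{0,\rho_k\cd}$ on an exhaustion $\rho_k\uparrow 1$ and keep the sequence Cauchy on compacts of $\d$. Choosing $\mu_k$ with $\sum_k\mu_k^2<\infty$ but $\sum_k\mu_k=+\infty$ (e.g.\ $\mu_k\sim 1/k$) would make $\delta_k$ bounded while $d_k\to\infty$; however, since here I only need a \emph{finite} number of steps I may simply take each $\mu_k$ a fixed small constant and stop after $N$ steps once $d_N>\lambda$, keeping $\delta_N$ (and hence the total boundary displacement) as small as desired by shrinking the $\mu_k$ and $N$ is then fixed, so there is no delicate summability to verify.

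The output of the recursion is a \emph{finite} composition, so the terminal map $\wt f:=f_N\colon\cd\hra\c^{2n+1}$ is a Legendrian embedding of class $\Oscr(\cd)$ with $\dist_{\wt f}(0,\t)>d_N>\lambda$, which is exactly condition $\dist_{\wt f}(0,\t)>\lambda$. Because each step of Lemma \ref{lem:Jordan0} approximates uniformly on $\cd$ (via condition (I), controlling the boundary values against the \emph{fixed} $\Ygot=f|_\t$) and uniformly on compacts in $\d$ against the previous map, I can arrange by an $\epsilon/2^k$ argument that $\|\wt f-f\|_{0,\cd}$ is as small as prescribed: the boundary estimate (I) keeps $\|f_k-f\|_{0,\t}$ small for all $k$, and the interior estimates keep $\|f_k-f\|_{0,K}$ small on any fixed compact $K\Subset\d$. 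The main obstacle, and the place needing care, is matching the bookkeeping of Lemma \ref{lem:Jordan0}: that lemma approximates only uniformly on compacts in $\d$ (not on all of $\cd$) and controls boundary behaviour only through the auxiliary data $(\Ygot,\delta,d)$, so the chief task is to verify that iterating it against a \emph{single fixed} target $\Ygot=f|_\t$ yields genuine uniform closeness to $f$ on the \emph{closed} disk while simultaneously driving the intrinsic distance past $\lambda$ in finitely many steps. This is precisely the mechanism by which \cite[Lemma 4.2]{AlarconDrinovecForstnericLopez2015PLMS} yields \cite[Lemma 4.1]{AlarconDrinovecForstnericLopez2015PLMS}, and I would invoke that scheme, supplying the Legendrian building block Lemma \ref{lem:Jordan0} in place of the null-curve one.
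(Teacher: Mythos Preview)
Your proposal is correct and follows essentially the same scheme as the paper: iterate Lemma~\ref{lem:Jordan0} finitely many times against the fixed target $\Ygot=f|_\t$, using that the increments $\mu_j$ can be chosen so that $\sum_j\mu_j$ exceeds any prescribed bound while $\sum_j\mu_j^2$ stays arbitrarily small (the paper takes $\mu_j=c/j$; your constant-$\mu$ variant with $N\mu>\lambda-d_0$ and $N\mu^2$ small works equally well). One remark on what you call the ``main obstacle'': since each $f_k$ and the original $f$ are of class $\Ascr^1(\cd)$, the difference $f_k-f$ is holomorphic on $\d$ and continuous on $\cd$, so the maximum modulus principle gives $\|f_k-f\|_{0,\cd}=\|f_k-f\|_{0,\t}<\delta_k$ directly from condition~(I); no separate interior bookkeeping is needed to pass from boundary control to uniform control on the closed disk.
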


The main point in the proof of Lemma \ref{lem:Jordan} is that, for any given constants $d_0>0$, $\delta_0>0$, and $c>0$, 
the sequence $\displaystyle d_j:=d_{j-1}+\frac{c}{j}$ diverges whereas the sequence 
$\displaystyle \delta_j:=\sqrt{\delta_{j-1}^2+\frac{c^2}{j^2}}$ converges as $j\in\n$ goes to infinity; this allows to approximate the 
initial curve $f$ uniformly on $\cd$ as close as desired by a Legendrian embedding $\wt f$ whose intrinsic boundary distance 
from $0\in\d$ is as large as desired.

Finally, Theorem \ref{th:Jordan} follows from Lemma \ref{lem:Jordan} by a standard recursive application. We refer to the proof of 
\cite[Theorem 1.1]{AlarconDrinovecForstnericLopez2015PLMS} via \cite[Lemma 4.1]{AlarconDrinovecForstnericLopez2015PLMS} 
and leave the details to the interested reader.

%%%%%%%%%%
%%%%%%%%%%
%%%%%%%%%%
%%%%%%%%%%  APPENDIX
%%%%%%%%%%
%%%%%%%%%%

\appendix

\section{Holomorphic version of Darboux's theorems}

In this appendix we collect some results concerning holomorphic contact and symplectic forms and structures.
The corresponding results in the smooth case are well-known and can be found in numerous sources; 
however, their complex (holomorphic) versions do not seem easily available in the literature.
We do not claim any originality whatsoever since the proofs follow rather closely
those in the smooth case. In the present paper, we strongly use Theorem \ref{th:Darboux-contact}
(Darboux's theorem for holomorphic contact structures) and Theorem \ref{th:path-approx}
concerning the approximation by Legendrian paths.

%
%  DARBOUX-SYMPLECTIC
%
\begin{theorem}\label{th:Darboux-symplectic}
Let $\omega$ be a closed nondegenerate holomorphic 2-form (i.e., a holomorphic symplectic form) on a 
complex manifold $M$ of even dimension $2n$. At every point $p\in M$ there exist local holomorphic 
coordinates $(x_1,y_1,\ldots,x_n,y_n)$ in which $\omega$ equals the standard holomorphic symplectic form 
$\omega_0=dx_1\wedge dy_1+\ldots + dx_n\wedge dy_n$.
\end{theorem}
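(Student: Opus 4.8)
The plan is to follow the classical Moser deformation (homotopy) argument, which goes through essentially verbatim in the holomorphic category once one checks that the requisite vector field integration can be done by holomorphic flows on a small enough neighborhood. First I would reduce to a normal-form problem at a single point: working in some local holomorphic chart centered at $p$, the value $\omega_p$ is an alternating nondegenerate bilinear form on $T_pM\cong\c^{2n}$, so by the purely linear-algebraic symplectic basis theorem over $\c$ there is a $\c$-linear change of coordinates after which $\omega_p=(\omega_0)_p$. Thus I may assume from the start that $\omega$ and $\omega_0$ agree at the origin, both being closed nondegenerate holomorphic $2$-forms on a neighborhood $U$ of $0$ in $\c^{2n}$.

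Next I would set $\omega_t=(1-t)\omega_0+t\omega$ for $t\in[0,1]$ (or better, for $t$ in a neighborhood of $[0,1]$ in $\c$, so everything stays holomorphic in $t$ as well). Since $\omega_t=\omega_0$ at the origin and nondegeneracy is an open condition, after shrinking $U$ each $\omega_t$ is a closed nondegenerate holomorphic $2$-form on $U$, uniformly for $t$ in the relevant compact interval. Because $U$ is a ball (hence has vanishing second holomorphic de Rham cohomology, e.g.\ by the holomorphic Poincar\'e lemma), the closed holomorphic $2$-form $\omega-\omega_0$ is exact: $\omega-\omega_0=d\sigma$ for a holomorphic $1$-form $\sigma$ on $U$, and by subtracting a constant-coefficient $1$-form I may arrange $\sigma_0=0$ at the origin. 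Then I define, using nondegeneracy of $\omega_t$, the time-dependent holomorphic vector field $X_t$ by the equation $X_t\,\rfloor\,\omega_t=-\sigma$; this $X_t$ is holomorphic in $(z,t)$ and vanishes at the origin.

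The heart of the argument is then the computation $\frac{d}{dt}\phi_t^*\omega_t=\phi_t^*\bigl(\Lcal_{X_t}\omega_t+\frac{d}{dt}\omega_t\bigr)=\phi_t^*\bigl(d(X_t\,\rfloor\,\omega_t)+(\omega-\omega_0)\bigr)=\phi_t^*(-d\sigma+d\sigma)=0$, where $\phi_t$ is the flow of $X_t$ and I have used Cartan's formula $\Lcal_{X_t}=d\circ(X_t\,\rfloor\,\cdot)+(X_t\,\rfloor\,\cdot)\circ d$ together with $d\omega_t=0$. Since $X_t(0)=0$ for all $t$, the origin is a fixed point of the flow, so by continuous dependence on initial conditions the flow $\phi_t$ is defined and holomorphic on a (possibly smaller) neighborhood of the origin for all $t\in[0,1]$; it is a biholomorphism onto its image with $\phi_0=\mathrm{id}$. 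Hence $\phi_1^*\omega=\phi_1^*\omega_1=\phi_0^*\omega_0=\omega_0$, and reading $\phi_1$ in the coordinates $(x_1,y_1,\ldots,x_n,y_n)$ gives the desired holomorphic Darboux coordinates.

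The main obstacle — and the only place where genuine care beyond transcribing the smooth proof is needed — is the existence of the holomorphic flow $\phi_t$: one must verify that a time-dependent holomorphic vector field vanishing at a point has a holomorphic local flow for all $t$ in a prescribed compact parameter interval, shrinking the spatial neighborhood as necessary. This follows from the holomorphic dependence of solutions of ODEs on initial conditions and parameters (the holomorphic version of the Picard–Lindel\"of theorem), applied near the fixed point $0$; the key quantitative input is that the $\omega_t$ are uniformly nondegenerate and the data $\sigma,X_t$ uniformly bounded on a fixed small ball for $t\in[0,1]$. Everything else — the linear normalization at the point, the holomorphic Poincar\'e lemma, and Cartan's formula — is standard and carries over unchanged from the real case.
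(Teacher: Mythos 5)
Your proposal is correct and follows essentially the same route as the paper: the Moser deformation argument with the linear normalization at the point, the family $\omega_t=(1-t)\omega_0+t\omega$, the holomorphic Poincar\'e lemma to write $\omega-\omega_0$ as an exact form, and the algebraic (pointwise, nondegeneracy-based) solution for the time-dependent vector field. Your extra care in normalizing the primitive to vanish at the origin, so that the vector field fixes $0$ and its holomorphic flow exists for all $t\in[0,1]$ on a shrunken neighborhood, is a welcome precision that the paper leaves implicit.
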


\begin{proof}
We follow Moser's proof for the smooth case (see \cite{Moser1965TAMS}).
We may assume that $M=\C^{2n}$ with complex coordinates $z=(x_1,y_1,\ldots,x_n,y_n)$, $p=0$, and 
\[
	\omega|_{0}= \omega_0:=dx_1\wedge dy_1+\ldots + dx_n\wedge dy_n.
\]
Consider the family of holomorphic symplectic forms near $0\in\C^{2n}$ given by
\[
	\omega_t=(1-t)\omega_0 + t\omega,\quad t\in [0,1].
\] 
We wish to find a holomorphic vector field $V_t$ on a neighborhood of the origin in $\C^{2n}$ 
whose local holomorphic flow $\phi_t$ (the solution of $\dot \phi_t= V_t\circ \phi_t$, $\phi_0(z)=z$) satisfies
\begin{equation}\label{eq:flow}
	\phi_t^* \omega_t = \omega_0, \quad t\in [0,1]
\end{equation}
in some neighborhood of the origin. Since $\phi_0=\mathrm{Id}$, this holds at $t=0$. 
At time $t=1$ we shall then get  $\phi_1^*\omega = \omega_0$ which will prove the theorem. 

Let $\Lcal_V$ denote the Lie derivative of a vector field $V$. We differentiate  \eqref{eq:flow} on $t$:
\begin{equation}\label{eq:ddt0}
	0= \frac{d}{dt} (\phi_t^* \omega_t) = \phi_t^*\left(\dot{\omega_t} + \Lcal_{V_t}\omega_t  \right) 
	= \phi_t^*\left( \omega-\omega_0 + \Lcal_{V_t}\omega_t \right).
\end{equation}
Applying Cartan's formula for the Lie derivative and noting that $d\omega_t=0$ gives
\begin{equation}\label{eq:Cartan}
	\Lcal_{V_t}\omega_t = d\left(V_t\,\rfloor\, \omega_t\right) + V_t\,\rfloor\, d\omega_t = d\left(V_t\,\rfloor\, \omega_t\right). 
\end{equation}
The $2$-form  $\omega-\omega_0$ is closed and hence exact near the origin, $\omega-\omega_0=-d\lambda$
for some holomorphic $1$-form $\lambda$. Thus \eqref{eq:ddt0} is equivalent to
\[	
	0 = d\left( V_t \,\rfloor\, \omega_t - \lambda\right) \quad \text{for all $t\in [0,1]$}.
\]
This holds if the vector field $V_t$ is chosen such that
\[
	V_t \,\rfloor\, \omega_t = \lambda,\quad t\in [0,1].
\]
This algebraic equation for the coefficients of $V_t$ has a unique holomorphic solution.
\end{proof}

We have the following analogous result for holomorphic contact forms;
see e.g. \cite[Theorem 2.5.1, p.\ 67]{Geiges2008} for the smooth case. 

%
%  DARBOUX-CONTACT
%
\begin{theorem}\label{th:Darboux-contact}
Let $\eta$ be a holomorphic contact form on a complex manifold $M^{2n+1}$. For every point $p\in M$
there exist local holomorphic coordinates $(x_1,y_1,\ldots,x_n,y_n,z)$ on a neighborhood of $p$ 
in which $\eta$ agrees with the standard contact form  
\begin{equation}\label{eq:eta0}
	\eta_0 = dz + \sum_{j=1}^n x_j \, dy_j.
\end{equation}
\end{theorem}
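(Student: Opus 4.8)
\textbf{Proof strategy for Theorem \ref{th:Darboux-contact}.}
The plan is to reduce the holomorphic contact Darboux theorem to the holomorphic symplectic Darboux theorem (Theorem \ref{th:Darboux-symplectic}), following the classical smooth argument as in \cite[Theorem 2.5.1]{Geiges2008}. First I would work in local holomorphic coordinates near $p$, which we place at the origin of $\c^{2n+1}$, and normalize the contact hyperplane $\ker\eta|_0$ at the origin. The key first step is to identify the Reeb vector field $R$ of $\eta$: the unique holomorphic vector field satisfying $\eta(R)=1$ and $R\,\rfloor\, d\eta=0$ (this is determined algebraically by the nondegeneracy $\eta\wedge(d\eta)^n\neq 0$, exactly as in \eqref{eq:transverse}). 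Then I would choose a local holomorphic hypersurface $N$ through $p$ that is transverse to $R$ (for instance the coordinate hyperplane $\{z=0\}$ after a linear change ensuring $R$ is transverse to it), and use the local holomorphic flow $\psi_t$ of $R$ to produce a "last" coordinate $z$: every point near $p$ is uniquely $\psi_z(q)$ for $q\in N$ and small $z$, and this $z$ is holomorphic. By construction $\di_z=R$ in these coordinates, so $\eta(\di_z)=1$ and $\Lcal_{\di_z}\eta = d(\eta(\di_z)) + \di_z\,\rfloor\, d\eta = 0$; hence $\eta = dz + \beta$ where $\beta$ is a holomorphic $1$-form independent of $z$, i.e.\ pulled back from $N$.

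The second step is to restrict to $N\cong\c^{2n}$ and observe that $\omega := d\beta|_N = d\eta|_N$ is a closed holomorphic $2$-form on $N$; the contact condition $\eta\wedge(d\eta)^n\neq 0$ forces $\omega^n\neq 0$ on $N$, so $\omega$ is a holomorphic symplectic form on a neighborhood of $p$ in $N$. By Theorem \ref{th:Darboux-symplectic} there are holomorphic coordinates $(x_1,y_1,\ldots,x_n,y_n)$ on $N$ near $p$ with $\omega = \sum_{j=1}^n dx_j\wedge dy_j$. Then $\beta - \sum_j x_j\, dy_j$ is a closed holomorphic $1$-form on $N$ near $p$, hence equals $dh$ for some holomorphic function $h$ on $N$; absorbing $h$ into the $z$-coordinate (replacing $z$ by $z + h$, pulled back along the $R$-flow to all of the neighborhood) we arrive at $\eta = dz + \sum_{j=1}^n x_j\, dy_j = \eta_0$, after extending the coordinates $x_j,y_j$ to the full neighborhood by making them constant along the $R$-flow. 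One must check at the end that $(x_1,y_1,\ldots,x_n,y_n,z)$ is indeed a holomorphic coordinate system, which follows since the $x_j,y_j,z$ are functionally independent at $p$ by the symplectic normalization on $N$ together with transversality of $R$ to $N$.

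The main obstacle I expect is a bookkeeping one rather than a conceptual one: carefully arranging the coordinate $z$ and the shift by $h$ so that the normalization $d\eta|_N = \omega_0$ survives after absorbing $h$, and verifying that extending $x_j, y_j$ by flow-invariance under $R$ does not destroy the identity $\eta = dz+\sum x_j dy_j$ off $N$. This is handled by the observation that $\eta$, $dz$, and $\sum x_j dy_j$ are all annihilated by $\di_z = R$ and are Lie-invariant under its flow, so an identity holding on the transversal $N$ propagates to the whole flow-box neighborhood. The only genuine input beyond linear algebra and the flow-box construction is Theorem \ref{th:Darboux-symplectic}, which has already been established above; the holomorphic category causes no additional difficulty since local holomorphic flows of holomorphic vector fields exist and depend holomorphically on time and initial condition, and the holomorphic Poincar\'e lemma supplies the primitives $\lambda$, $h$ used above.
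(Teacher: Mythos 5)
Your proof is correct, but it takes a genuinely different route from the one in the paper. You reduce the contact Darboux theorem to the symplectic one (Theorem \ref{th:Darboux-symplectic}): construct the holomorphic Reeb field $R$, put it into a holomorphic flow-box so that $R=\di_z$ and $\eta=dz+\beta$ with $\beta$ pulled back from a transversal $N$, normalize the symplectic form $d\beta|_N$ on $N$ by Theorem \ref{th:Darboux-symplectic}, and absorb the exact remainder $dh$ into the $z$-coordinate. The only analytic inputs are the local holomorphic flow of a holomorphic vector field and the holomorphic Poincar\'e lemma, both unproblematic. The paper instead runs Moser's homotopy method directly on the linear interpolation $\eta_t=\eta_0+t(\eta-\eta_0)$ after a pointwise linear normalization at $p$, decomposing the sought time-dependent vector field as $V_t=h_tR_t+Y_t$ with $Y_t\in\ker\eta_t$, solving the quasilinear first-order equation $R_t(h_t)=R_t\rfloor(\eta_0-\eta)$ by characteristics for $h_t$ and an algebraic equation for $Y_t$. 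Your argument is the more elementary of the two---it avoids the time-dependent PDE and actually puts Theorem \ref{th:Darboux-symplectic} to use, whereas the paper proves the contact case independently of the symplectic one---while the Moser-type argument generalizes more readily to parametric statements and Gray-type stability. The checks you flag as potential obstacles do go through: since $\beta$, $h$ and the $x_j,y_j$ are all $z$-independent in the flow-box, the identity $\beta=dh+\sum_j x_j\,dy_j$ established on $N$ propagates to the whole neighborhood, and the change $z\mapsto z+h$ has unipotent triangular Jacobian, so $(x_1,y_1,\ldots,x_n,y_n,z+h)$ is indeed a holomorphic coordinate system in which $\eta=\eta_0$.
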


\begin{proof}
By the same argument as in the real case (see Geiges \cite{Geiges2008}), 
a holomorphic contact form $\eta$ uniquely determines a holomorphic 
vector field $R_\eta$ by the following two conditions:
\begin{equation}\label{eq:Reeb}
	R_\eta \,\rfloor\, \eta = \eta(R_\eta)=1,\quad R_\eta \,\rfloor\, d\eta=0.
\end{equation}
This $R_\eta$ is called the {\em Reeb vector field} of $\eta$. From the formula \eqref{eq:Cartan} we see that 
\[
	\Lcal_{R_\eta} \eta = d\left(R_\eta \rfloor\, \eta\right) + R_\eta\,\rfloor\, d\eta =0. 
\]

We may assume that $M=\C^{2n+1}$ and $p=0$ is the origin.
By a linear algebra argument, we can choose linear complex coordinates $(x_1,y_1,\ldots,x_n,y_n,z)$ 
on $\C^{2n+1}$ such that 
\[
	\text{$\eta=\eta_0$\ \ and\ \ $d\eta=d\eta_0$\ \ hold on $T_0\C^{2n+1}$.}
\]
It follows that 
\[
	\eta_t=\eta_0+t(\eta-\eta_0),\quad  t\in[0,1],
\]
is a smooth family of holomorphic contact forms on a neighborhood of $0\in\C^{2n+1}$
such that $\dot \eta_t=0$ holds at $0\in\C^{2n+1}$ for all $t\in[0,1]$. We shall find a time-dependent holomorphic 
vector field $V_t$ on a neighborhood of the origin on $\C^{2n+1}$ whose flow $\phi_t$ exists
on a smaller neighborhood of $0$ for all $t\in[0,1]$  and satisfies
\begin{equation}\label{eq:flow2}
	\phi_t^* \eta_t = \eta_0, \quad t\in [0,1].
\end{equation}	
At time $t=1$ we shall get  %$\phi_1^*\eta = \eta_0$ which will prove the theorem.
\[
	\phi_1^*\eta = \eta_0
\]
which will prove the theorem.

Let $R_t$ denote the holomorphic Reeb vector field of $\eta_t$ (cf.\ \eqref{eq:Reeb}). We seek $V_t$ in the form 
\begin{equation}\label{eq:Vt}
	V_t=h_t R_t + Y_t,\quad Y_t\in \Lscr_t:= \ker(\eta_t),
\end{equation}
where $h_t$ is a smooth family of holomorphic functions and $Y_t\in \Lscr_t$ is a smooth family of 
holomorphic contact vector fields on a neighborhood of the origin. Then 
\[
	V_t \,\rfloor\, \eta_t = h_t,\quad  V_t\,\rfloor\, d\eta_t = Y_t \,\rfloor\, d\eta_t.
\]
Differentiating the equation \eqref{eq:flow2} on $t$ gives
\begin{equation}\label{eq:htYt}
	0= \dot{\eta_t} + d(V_t\rfloor \eta_t) + V_t\,\rfloor\, d\eta_t = (\eta-\eta_0 + dh_t) + Y_t \,\rfloor\, d\eta_t.
\end{equation}
Since $d\eta_t$ is nondegenerate on $\Lscr_t$, a suitable (unique!) choice of the vector field $Y_t$ tangent to 
$\Lscr_t$ ensures that $Y_t \,\rfloor\, d\eta_t$ equals any given holomorphic $1$-form
that is annihilated by $R_t$. Hence, it suffices to choose the function $h_t$ such 
that the component of the $1$-form $ \eta-\eta_0 + dh_t$ in the direction of $R_t$ vanishes.
This gives the following $1$-parameter family of quasilinear holomorphic partial differential equations for 
the functions $h_t$:
\[
	 R_t(h_t)  = R_t\rfloor dh_t  = R_t\rfloor (\eta_0-\eta),\quad t\in[0,1].
\]
Since $R_t$ is nonvanishing for all $t$ and the right hand side vanishes at $0\in\C^{2n+1}$, this equation
can be solved by the method of characteristics in a small neighborhood of $0$, and we can also
choose $h_t$ such that $h_t(0)=0$ for all $t\in[0,1]$.  Inserting the solution $h_t$ into \eqref{eq:htYt} 
we then obtain a unique holomorphic vector field $Y_t\in\Lscr_t$ such that the flow of the vector field 
$V_t$ given by \eqref{eq:Vt} satisfies condition \eqref{eq:flow2}. 
\end{proof}

A holomorphic vector field $V$ on a complex contact manifold $(M,\Lscr)$ is said to be 
an {\em infinitesimal automorphism} of the contact structure $\Lscr$, or a {\em contact holomorphic Hamiltonian}, 
if its local holomorphic flow $\phi_t$ preserves $\Lscr$, in the sense that for all $t\in\R$ we have $T(\phi_t)\Lscr=\Lscr$
on the maximal open subset $M_t\subset M$ on which the flow $\phi_\tau$ is defined for 
all $\tau\in [0,t]$. Assuming that $\Lscr=\ker(\eta)$, this is equivalent to 
\begin{equation}\label{eq:LVeta}
	\Lcal_V \eta= \lambda\eta\quad\text{for some}\ \ \lambda\in\Oscr(M).
\end{equation}
The following result describes infinitesimal automorphisms of a holomorphic contact structure;
see % \cite[Theorem 2.22, p.\ 328]{Geiges2006} or 
e.g.\ \cite[Theorem 2.3.1, p.\ 62]{Geiges2008} for the smooth case.

%
%  INFINITESIMAL AUTOMORPHISMS 
%
\begin{theorem}\label{th:infinitesimal}  
Let $(M,\eta)$ be a complex contact manifold and let $R$ be the associated Reeb vector field \eqref{eq:Reeb}. 
There is a bijective correspondence between holomorphic functions $h$ on $M$ and holomorphic 
vector fields $V$ on $M$ which are infinitesimal holomorphic automorphisms of the contact structure $\Lscr=\ker\eta$.
The correspondence is given by
\begin{itemize}
\item $V\mapsto h:=V\rfloor\, \eta \in \Oscr(M)$;
\vspace{1mm}
\item $\Oscr(M) \ni h \mapsto V$, where $V=V_h$ is uniquely determined by the conditions
\begin{equation}\label{eq:Vh}
	V\,\rfloor\, \eta = h, \quad V\,\rfloor\, d\eta = - dh + R(h)  \eta.
\end{equation}
\end{itemize}
In particular, if $M$ is compact then the only infinitesimal automorphisms of the contact structure
$\Lscr=\ker \eta$ are the constant multiples $cR$ $(c\in\C)$ of the Reeb vector field.
\end{theorem}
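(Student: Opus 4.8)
\textbf{Proof proposal for Theorem \ref{th:infinitesimal}.}
The plan is to follow the standard Cartan-calculus argument, working entirely with holomorphic objects. First I would establish the correspondence $h\mapsto V_h$. Given $h\in\Oscr(M)$, the two conditions in \eqref{eq:Vh} determine $V_h$ uniquely: the component of $V_h$ along the Reeb direction is fixed by $V_h\rfloor\eta=h$, while the component tangent to $\Lscr=\ker\eta$ is fixed by the second equation, since $d\eta$ is nondegenerate on $\Lscr$ (this is precisely the contact condition $\eta\wedge(d\eta)^n\neq0$) and the right-hand side $-dh+R(h)\eta$ annihilates $R$ — indeed, pairing with $R$ and using $R\rfloor\eta=1$, $R\rfloor d\eta=0$ from \eqref{eq:Reeb} gives $R\rfloor(-dh+R(h)\eta)=-R(h)+R(h)=0$, so it lies in the image of $\Lscr$ under $\rfloor\, d\eta$. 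Uniqueness and holomorphic dependence on $h$ are then immediate from linear algebra applied fibrewise.

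Next I would check that $V_h$ so defined is an infinitesimal automorphism, i.e. satisfies \eqref{eq:LVeta}. Using Cartan's formula $\Lcal_{V_h}\eta = d(V_h\rfloor\eta) + V_h\rfloor d\eta$, together with the two defining relations \eqref{eq:Vh}, one computes
\[
	\Lcal_{V_h}\eta = dh + \bigl(-dh + R(h)\,\eta\bigr) = R(h)\,\eta,
\]
so \eqref{eq:LVeta} holds with $\lambda = R(h)\in\Oscr(M)$. Conversely, given an infinitesimal automorphism $V$ with $\Lcal_V\eta=\lambda\eta$, set $h:=V\rfloor\eta\in\Oscr(M)$. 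Then $d h = d(V\rfloor\eta) = \Lcal_V\eta - V\rfloor d\eta = \lambda\eta - V\rfloor d\eta$, so $V\rfloor d\eta = \lambda\eta - dh$; pairing this with $R$ and using $R\rfloor d\eta=0$ gives $0 = \lambda - R(h)$, hence $\lambda=R(h)$ and $V$ satisfies exactly the system \eqref{eq:Vh}. By the uniqueness already established, $V=V_h$, so the two assignments are mutually inverse. This proves the asserted bijection.

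For the final assertion, suppose $M$ is compact and $V=V_h$ is an infinitesimal automorphism. I would argue that $R(h)$ vanishes identically: since $\Lcal_V\eta=R(h)\eta$, integrating the flow of $V$ scales $\eta$ and hence the volume form $\eta\wedge(d\eta)^n$ by a factor whose logarithmic derivative is a multiple of $R(h)$; a cleaner route is to observe that $h$ is a holomorphic function on the compact complex manifold $M$, hence locally constant on each connected component, so in particular $dh=0$ on each component and therefore $R(h)=R\rfloor dh=0$. Then the system \eqref{eq:Vh} reduces to $V\rfloor\eta=h$ (a constant $c$ on each component) and $V\rfloor d\eta=0$, which by \eqref{eq:Reeb} and nondegeneracy forces $V=cR$. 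Thus the only infinitesimal automorphisms are the constant multiples $cR$ ($c\in\C$) of the Reeb field, as claimed.

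I do not expect a serious obstacle here: the argument is the verbatim complex analogue of the smooth case, and the only points requiring the holomorphic hypothesis — that the fibrewise linear-algebra solve produces a \emph{holomorphic} vector field, and that holomorphic functions on a compact complex manifold are locally constant — are both standard. The mildest care is needed in handling the possibility that $M$ is disconnected in the compactness statement, where "constant multiple of $R$" should be read componentwise.
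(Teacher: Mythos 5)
Your proposal is correct and follows essentially the same route as the paper: both directions rest on Cartan's formula $\Lcal_V\eta = d(V\rfloor\eta)+V\rfloor d\eta$, the identification $\lambda=R(h)$ by contracting with $R$, and the unique fibrewise solution of $Y\rfloor d\eta=-dh+R(h)\eta$ for $Y\in\Lscr$ using nondegeneracy of $d\eta$ on $\Lscr$. Your treatment of the compact case (holomorphic functions are locally constant, hence $dh=0$ and $V=cR$) is the intended, immediate argument, which the paper leaves implicit.
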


After the completion of the paper, we noticed that this result is already available in \cite[Proposition 2.1]{LeBrun1995IJM}.

\begin{proof}
Assume that $V$ is a contact Hamiltonian of $(M,\eta)$. 
Set $h=V\rfloor\eta\in \Oscr(M)$. By the Cartan formula for the Lie derivative, condition \eqref{eq:LVeta} is equivalent to
\[
	\lambda\eta = \Lcal_V \eta = d(V\rfloor\eta) + V\rfloor d\eta = dh + V\rfloor d\eta.
\]
Contracting this $1$-form by the Reeb vector field $R$ for $\eta$ gives 
\[
	\lambda = R \,\rfloor\, dh + R \,\rfloor\, (V\rfloor d\eta) = R(h) - V \,\rfloor\, (R\,\rfloor\, d\eta) = R(h).
\]
Inserting this into the previous formula shows that $V$ satisfies conditions \eqref{eq:Vh}.
Conversely, given a function $h\in \Oscr(M)$, the holomorphic $1$-form 
\[
	\alpha = -dh + R(h)\eta
\] 
clearly satisfies $R\,\rfloor\, \alpha=0$, so $\alpha$ has no component in the direction $R$. 
Since $\ker(d\eta)=\span(R)$, there exists a unique holomorphic vector field $Y$ on $M$ such that
\[	
	Y \,\rfloor\, \eta=0\quad \text{and}\quad Y \,\rfloor\, d\eta = \alpha. 
\]
Set $V=hR + Y$. Then $V\,\rfloor\, \eta = h$ and $V\,\rfloor\, d \eta = Y\,\rfloor\, d \eta =\alpha$, 
so $V$ satisfies condition \eqref{eq:Vh}. Cartan's formula shows that  
\begin{equation}\label{eq:LV}
	\Lcal_V \eta = d(V\rfloor \eta) + V\rfloor d\eta = dh + (-dh + R(h)\eta) = R(h) \eta. 
\end{equation}
Hence $V$ is an infinitesimal automorphism of the contact structure $\Lscr=\ker \eta$.
\end{proof}

The following corollary to Theorem \ref{th:infinitesimal}  is analogous to  \cite[Corollary 2.3.2, p.\ 63]{Geiges2008}.

\begin{corollary}
Let $(M,\eta)$ be a complex contact manifold. Given a smooth family of holomorphic functions 
$\{h_t\}_{t\in[0,1]}\subset \Oscr(M)$, let $\{V_t\}_{t\in[0,1]}$ be the corresponding family of contact Hamiltonians
defined by \eqref{eq:Vh}. Assume that the flow $\phi_t$ of the time-dependent vector field $V_t$ 
exists on a domain $M_0 \subset M$ for all $t\in[0,1]$. Then there is a smooth family of nonvanishing 
holomorphic function $\{\lambda_t\}_{t\in[0,1]} \subset \Oscr(M_0)$ such that
\[
	\phi_t^*\eta=\lambda_t\eta\quad\text{on}\ M_0,\ \  t\in[0,1].
\]
In particular, the biholomorphic map $\phi_t\colon M_0\to \phi_t(M_0)\subset M$ is a holomorphic contactomorphism
of the contact structure $\Lscr=\ker\eta$ for every $t\in[0,1]$.
\end{corollary}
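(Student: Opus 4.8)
The plan is to deduce this corollary from Theorem \ref{th:infinitesimal} by the standard Moser-type differentiation argument, exactly as in the proof of Theorem \ref{th:Darboux-contact}, only now with no prescribed target form. First I would fix a point $p\in M_0$ and consider the smooth family of holomorphic $1$-forms $t\mapsto \phi_t^*\eta$ defined on a neighborhood of $p$ for $t\in[0,1]$; since $\phi_0=\Id$ we have $\phi_0^*\eta=\eta$, so it suffices to show that $\phi_t^*\eta$ stays pointwise proportional to $\eta$ with a nonvanishing holomorphic coefficient. The natural way is to differentiate in $t$ and use the defining property of the $V_t$.

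The key computation is the following. Using the chain rule for pullback along a time-dependent flow together with Cartan's formula (as in \eqref{eq:Cartan}), one gets
\[
	\frac{d}{dt}\bigl(\phi_t^*\eta\bigr)=\phi_t^*\bigl(\Lcal_{V_t}\eta\bigr).
\]
By Theorem \ref{th:infinitesimal} (see in particular \eqref{eq:LV}) we have $\Lcal_{V_t}\eta=R(h_t)\,\eta$, where $R$ is the Reeb field and $h_t=V_t\rfloor\eta$. Hence, writing $\mu_t:=R(h_t)\in\Oscr(M)$, we obtain the linear ODE
\[
	\frac{d}{dt}\bigl(\phi_t^*\eta\bigr)=\bigl(\phi_t^*\mu_t\bigr)\cdot\bigl(\phi_t^*\eta\bigr),
\]
which is an ODE in the $t$-variable for the $1$-form $\phi_t^*\eta$ with scalar (function) coefficient $\phi_t^*\mu_t$, valued in each cotangent space. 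Integrating it gives $\phi_t^*\eta=\lambda_t\,\eta$ with
\[
	\lambda_t:=\exp\!\left(\int_0^t \phi_s^*\mu_s\,ds\right),
\]
and $\lambda_t$ is a smooth family of holomorphic functions on $M_0$ (holomorphic because $\phi_s$ is biholomorphic and $\mu_s$ is holomorphic, smooth in $t$ since $V_t$ and hence $\phi_t$ depend smoothly on $t$), which is nonvanishing since it is an exponential. The final assertion is then immediate: $\phi_t\colon M_0\to\phi_t(M_0)$ is a biholomorphism by hypothesis, and $\phi_t^*\eta=\lambda_t\eta$ with $\lambda_t$ nowhere zero shows $\phi_t^*(\ker\eta)=\ker(\phi_t^*\eta)=\ker\eta=\Lscr$, i.e.\ $\phi_t$ is a contactomorphism of $\Lscr$.

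The only point requiring a little care — and what I expect to be the main (modest) obstacle — is the justification of the identity $\tfrac{d}{dt}(\phi_t^*\eta)=\phi_t^*(\Lcal_{V_t}\eta)$ in the holomorphic, time-dependent setting: one must check that the flow of $V_t$ is holomorphic in the space variables, smooth in $t$, and that the usual real-variable formula (valid for smooth time-dependent vector fields) carries over verbatim. This is routine — it follows from holomorphic dependence of solutions of holomorphic ODEs on initial conditions and parameters, combined with the classical formula for the Lie derivative along a nonautonomous flow — but it should be stated. Everything else is the elementary integration of a scalar linear ODE and the trivial observation that an exponential never vanishes.
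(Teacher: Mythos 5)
Your argument is correct and coincides with the paper's own proof: both differentiate $\phi_t^*\eta$, invoke the identity $\Lcal_{V_t}\eta=R(h_t)\eta$ from Theorem \ref{th:infinitesimal} (equation \eqref{eq:LV}), and integrate the resulting scalar linear ODE to obtain $\lambda_t=\exp\bigl(\int_0^t R(h_s)\circ\phi_s\,ds\bigr)$. The extra remark you flag about justifying $\tfrac{d}{dt}(\phi_t^*\eta)=\phi_t^*(\Lcal_{V_t}\eta)$ in the holomorphic time-dependent setting is a reasonable point of care, but the paper treats it as standard, exactly as you do.
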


\begin{proof}
By the assumption we have that $\phi_0=\Id_M$ and $\dot{\phi_t}=V_t\circ\phi_t$ on $M_0$ for all $t\in[0,1]$.
Let $R$ denote the Reeb vector field of $\eta$. By \eqref{eq:LV} we have 
$\Lcal_{V_t} \eta = R(h_t)\eta$ for all $t\in[0,1]$. Hence we get the following identity on $M_0$:
\[
	\frac{d}{dt} \phi_t^* \eta= \phi_t^* \left(\Lcal_{V_t} \eta\right) = \phi_t^* \bigl( R(h_t) \eta\bigr)
	= \mu_t \phi_t^*\eta,\quad t\in[0,1]
\]
where $\mu_t = R(h_t) \circ\phi_t\in \Oscr(M_0)$. Since $\phi_0^*\eta=\eta$,
it follows by integration that $\phi_t^* \eta=\lambda_t \eta$ where 
$\lambda_t=\exp\left(\int_0^t \mu_s ds\right) \in\Oscr(M_0)$ for $t\in[0,1]$.
\end{proof}

\begin{example}
Let $\eta=dz+\sum_{i=1}^n x_i dy_i$ be the standard contact form on $\C^{2n+1}$. 
Then the correspondence in Theorem \ref{th:infinitesimal}  is given by 
\[
	h\longmapsto V_h = \biggl( h - \sum_{j=1}^n x_j h_{x_j}\biggr) \di_z 
	+ \sum_{j=1}^n \bigl( (x_j h_z - h_{y_j}) \di_{x_j} + h_{x_j}\di_{y_j} \bigr).
\]
Note that for any linear function $h$ on $\C^{2n+1}$ the vector field $V_h$ 
is also linear and hence completely integrable. Its flow $\{\phi_t\}_{t\in\C}$ is a complex $1$-parameter family of 
$\C$-linear contactomorphisms of the standard contact structure $\Lscr= \ker\eta$ on $\C^{2n+1}$.
\end{example}

The last result that we mention concerns the possibility of approximating any smooth compact real curve in 
a complex contact manifold by Legendrian curves. 

%
%  Approximation by Legendrian paths in arbitrary contact manifold
% 
\begin{theorem} \label{th:path-approx}
Let $(M,\Lscr)$ be a complex contact manifold.
Every path $\gamma\colon [0,1]\to M$ can be approximated in the $\Cscr^0$ topology by smooth 
embeddings $\lambda\colon [0,1]\to M$ tangential to $\Lscr$ (i.e., such that $\dot\lambda(t)\in \Lscr_{\lambda(t)}$ 
holds for all $t\in [0,1]$). Furthermore, assuming that the vectors $\dot \gamma(0)$  and $\dot \gamma(1)$ lie in 
$\Lscr$ at the respective points, $\lambda$ can be chosen such that $\dot\lambda(t)=\dot\gamma(t)$ for $t\in \{0,1\}$.
\end{theorem}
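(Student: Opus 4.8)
The plan is to reduce, via Darboux's theorem (Theorem \ref{th:Darboux-contact}), to the standard model $(\c^{2n+1},\eta_0)$ with $\eta_0=dz+\sum_{j=1}^n x_j\,dy_j$, and then to carry out an explicit ``Legendrian wiggling'' construction. First I would cover the compact set $\gamma([0,1])$ by finitely many Darboux charts $U_1,\dots,U_k$ and choose $0=t_0<t_1<\cdots<t_k=1$ with $\gamma([t_{i-1},t_i])\subset U_i$; write $q_i=\gamma(t_i)$, so $q_{i-1},q_i\in U_i$. It then suffices to solve, in the Darboux coordinates on each $U_i$, the following model problem: $\Cscr^0$-approximate a smooth path in $(\c^{2n+1},\eta_0)$ between prescribed endpoints by an $\eta_0$-Legendrian path with the \emph{same} endpoints and with prescribed $1$-jets (indeed prescribed germs) at the two endpoints. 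If at each interior junction $q_i$ we require the piece to coincide near its endpoints with a fixed straight Legendrian segment through $q_i$ (a line whose direction lies in $\Lscr_{q_i}$), the pieces concatenate into a globally smooth Legendrian path. At the overall endpoints, if $\dot\gamma(0)\in\Lscr$ (resp.\ $\dot\gamma(1)\in\Lscr$) I would instead prescribe the germ near $0$ to be the line through $q_0$ in the direction $\dot\gamma(0)$, which is Legendrian and agrees with $\gamma$ to first order, yielding $\dot\lambda(0)=\dot\gamma(0)$.

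The heart of the matter is the model problem. Write $\gamma=(x,y,z)\colon[0,1]\to\c^{2n+1}$ and set $\Delta(t)=z(t)-z(0)+\int_0^t\sum_j x_j(s)\,y_j'(s)\,ds$, the defect of $\gamma$ from being Legendrian; then $\Delta(0)=0$ and $\Delta$ has finite total variation. The naive candidate $\tilde z(t)=z(0)-\int_0^t\sum_j x_j\,dy_j$ makes $(x,y,\tilde z)$ Legendrian but $z-\tilde z=\Delta$, so one must add $\Delta(t)$ back into the $z$-coordinate using only Legendrian moves. To do this, fix a fine partition $\delta=s_0<\cdots<s_N=1-\delta$ of the interior portion, put $\delta_m=\Delta(s_m)-\Delta(s_{m-1})$, and on a tiny parameter-subinterval near each $s_m$ insert into the pair $(x_1,y_1)$ a small closed loop $x_1=x_1(s_m)+a\,e^{-iN't}$, $y_1=y_1(s_m)+b\,e^{iN't}$, which contributes $-\oint x_1\,dy_1=-2\pi i\,abN'$ to the $z$-coordinate; choosing $abN'=-\delta_m/(2\pi i)$ with $N'$ large makes the loop $\Cscr^0$-small (of size $\sim\sqrt{|\delta_m|/N'}\to0$) while producing exactly the increment $\delta_m$. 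Since $\sum_m|\delta_m|$ is bounded and $\max_m|\delta_m|\to0$ upon refining the partition, the resulting Legendrian path stays within $\epsilon$ of $\gamma$ in the $(x,y)$-coordinates, and its $z$-coordinate differs from $z(t)$ by at most the modulus of continuity of $\Delta$ at the partition scale, hence $<\epsilon$. All modifications are supported in $(\delta,1-\delta)$, so the prescribed endpoint germs are untouched, and the finitely many corners introduced in $(x_1,y_1)$ can be rounded smoothly with a negligible, compensable change in the integral.

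Finally, once a Legendrian path $\lambda\colon[0,1]\to M$ has been produced, I would perturb it slightly to make it a smooth embedding. Perturbing the $(x,y)$-projection generically renders it an immersion into $\c^{2n}$ (possible since $2n\ge2$), whence $\lambda$ is automatically an immersion; a further $\Cscr^\infty$-small perturbation within Legendrian immersions makes $\lambda$ injective, because $\dim M=2n+1\ge3$ and the difference map $(s,s')\mapsto \lambda(s')-\lambda(s)$ (read chart-by-chart) can be made transverse to $0$ off the diagonal by varying the $(x,y)$-projection within a finite-dimensional period-dominating spray, exactly as in the proof of Lemma \ref{lem:position}; by dimension count it then omits $0$. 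These perturbations can be kept away from the endpoints, preserving the jet conditions there. The main obstacle is precisely the model-problem step — arranging the $z$-coordinate to come out correctly using only Legendrian moves while keeping the perturbation $\Cscr^0$-small — and the loop/winding construction above is what resolves it; the Darboux reduction, the smooth gluing at junctions, and the general-position argument for embeddedness are then routine.
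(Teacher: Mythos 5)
Your proposal is correct, but it takes a genuinely different route from the paper's. The paper outsources the entire $\Cscr^0$-approximation step to Chow's theorem (Theorem \ref{th:Chow}): since $\Lscr$ is spanned by vector fields whose commutators span $TM$, every path is $\Cscr^0$-approximable by concatenations of integral curves of those fields, and the only remaining work is to smooth this piecewise path inside Darboux charts --- which the paper does exactly as you do at the junctions, by perturbing the $(x,y)$-projection to an embedding of $[0,1]$ into $\c^{2n}$ (trivial by dimension count, no transversality machinery needed) and re-integrating $z=-\int\sum x_j\,dy_j$. Your argument replaces the appeal to Chow by an explicit, self-contained construction in the Darboux model: you quantify the Legendrian defect $\Delta(t)$ and restore it to the $z$-coordinate by inserting $\Cscr^0$-small, rapidly winding loops of prescribed symplectic area $\delta_m$ into the pair $(x_1,y_1)$. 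This is in effect a hands-on proof of the special case of the Chow--Rashevskii theorem that the paper invokes; it is longer but makes the mechanism transparent and elementary. Two small points of care that your sketch glosses over but that are easily repaired: (a) the loop computation $\oint x_1\,dy_1=2\pi i\,abN'$ presupposes the circle is traversed $N'$ times over a parameter interval of length $2\pi$ (as a single traversal the area is $2\pi i\,ab$ independently of $N'$); your choice $abN'=-\delta_m/(2\pi i)$ is consistent with the multi-traversal reading, so this is only a matter of stating the parametrization precisely. (b) Because $z$ is slaved to $(x,y)$ by integration, the final general-position perturbation of the $(x,y)$-projection (to achieve injectivity) shifts the value of $z$ at the right endpoint and hence threatens the exact matching needed to concatenate the pieces at the junctions $q_i$; this is fixed either by performing the embedding perturbation before the wiggling, or by appending one extra compensating loop afterwards. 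Also note that for a path (as opposed to a Riemann surface) the full period-dominating spray and Abraham transversality apparatus of Lemma \ref{lem:position} is unnecessary: a generic $\Cscr^\infty$-small perturbation of a map $[0,1]\to\c^{2n}$ with $2n\ge 2$ is already an embedding, which is all the paper uses.
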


Since the contact distribution $\Lscr\subset TM$ is spanned by vector fields
which, together with their commutators, span the tangent bundle $TM$ at every point (see Subsec.\ \ref{ss:eta}),
Theorem \ref{th:path-approx} is essentially a corollary to the following theorem of Chow \cite{Chow1939MA} from 1939.

%
%  Chow's theorem
%
\begin{theorem}\label{th:Chow}
Let $V_1,\ldots,V_m$ be smooth vector fields on a connected
manifold $M$ such that their successive commutators span each tangent space $T_p M$, $p\in M$.
Then every two points in $M$ can be joined by a piecewise smooth path where each piece 
is a segment of an integral curve of one of these vector fields. Furthermore, every path $\gamma\colon [0,1] \to M$ 
can be $\Cscr^0$ approximated by piecewise smooth paths $\lambda\colon [0,1] \to M$ of the above type 
such that $\lambda(t)=\gamma(t)$ for $t\in\{0,1\}$. 
\end{theorem}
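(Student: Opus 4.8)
The plan is to prove this Chow–Rashevskii type statement by analyzing the \emph{reachable set} $\Acal(p)$ of a point $p\in M$, defined as the set of all endpoints of piecewise smooth paths issuing from $p$ whose pieces are segments, possibly traversed in negative time, of integral curves of the fields $V_1,\ldots,V_m$. Reachability is an equivalence relation: it is reflexive, transitive by concatenation, and symmetric because each integral-curve segment can be run backwards. Writing $\phi^i_t$ for the local flow of $V_i$, an admissible path from $p$ is encoded by a composition $\Phi(t_1,\ldots,t_k)=\phi^{i_k}_{t_k}\circ\cdots\circ\phi^{i_1}_{t_1}(p)$ attached to an index word $(i_1,\ldots,i_k)$, and such a $\Phi$ depends smoothly on both the times and the base point.

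First I would show that $\Acal(p)$ has nonempty interior. Among all index words and all admissible parameter values, I would choose a composition $\Phi$ whose differential attains the maximal possible rank $r$, say at $t^\ast$, and let $\Sigma$ be the $r$-dimensional immersed submanifold that $\Phi$ parametrizes near $t^\ast$, with $q=\Phi(t^\ast)\in\Acal(p)$. If $r<\dim M$, maximality forces each $V_{i_0}$ to be tangent to $\Sigma$ near $q$: appending the flow $\phi^{i_0}_s$ yields a composition of rank $\le r$, so $V_{i_0}(q)\in T_q\Sigma$, and likewise at nearby points. But a distribution to which all $V_i$ are tangent is closed under Lie brackets, so all iterated commutators would be tangent to $\Sigma$ as well, contradicting the hypothesis that they span the $\dim M$-dimensional space $T_qM$. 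Hence $r=\dim M$, $\Phi$ is a submersion at $t^\ast$, and its image is an open neighborhood of $q$ inside $\Acal(p)$.

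Next I would upgrade this to openness of the whole class. Given $p'\in\Acal(p)=\Acal(q)$, I fix a composition of flows $H$ with fixed times carrying $q$ to $p'$; applying the \emph{same} flows to any nearby point $w$ produces an admissible path from $w$ to $H(w)$, so $H$ maps the open neighborhood $W\subset\Acal(q)$ of $q$ into $\Acal(q)$, and $H(W)$ is an open neighborhood of $p'$. Thus each class $\Acal(p)$ is open; the classes partition $M$ into disjoint open sets, so connectedness of $M$ forces a single class, i.e.\ $\Acal(p)=M$ for every $p$. This establishes the first assertion, that any two points are joined by an admissible piecewise-integral-curve path.

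For the approximation statement the decisive ingredient is a quantitative local version: for each $x_0\in M$ and each $\epsilon>0$ there is a neighborhood $V$ of $x_0$ such that any two points of $V$ can be joined by an admissible path of diameter $<\epsilon$. I would obtain this by realizing bracket directions through \emph{commutator words of flows}: for an iterated bracket $W$ of length $\ell$, the commutator $\phi^j_{-s}\circ\phi^i_{-s}\circ\phi^j_{s}\circ\phi^i_{s}$ and its higher analogues, reparametrized by a fractional time $s=\tau^{1/\ell}$, trace admissible paths whose first-order displacement at $\tau=0$ is $W(x_0)$ and whose diameter tends to $0$ with $\tau$. Selecting bracket words $W_1,\ldots,W_n$ with $n=\dim M$ whose values form a basis of $T_{x_0}M$ and composing the corresponding commutator words yields a smooth map $E(\tau_1,\ldots,\tau_n)$ with $E(0)=x_0$ that is a submersion at $0$; by the inverse function theorem $E$ carries every neighborhood of $0$ onto a neighborhood of $x_0$ along admissible paths of arbitrarily small diameter, giving the local statement. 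Then, given $\gamma\colon[0,1]\to M$ and $\epsilon>0$, I would subdivide $[0,1]$ so finely that consecutive sample points $\gamma(s_{j-1})$ and $\gamma(s_j)$ lie in a common such neighborhood of controlled diameter, connect them \emph{exactly} by admissible paths of small diameter, and concatenate after reparametrizing each piece onto $[s_{j-1},s_j]$; since the construction hits each $\gamma(s_j)$ exactly, the resulting $\lambda$ satisfies $\lambda(t)=\gamma(t)$ for $t\in\{0,1\}$ and is uniformly $\epsilon$-close to $\gamma$. The hardest step will be this commutator construction with controlled times, where one must verify the Lie-bracket asymptotics of products of flows and, via the fractional-time reparametrization, both the full-rank property of $dE_0$ and the vanishing of path diameters, since mere openness of the reachable set does not by itself yield the short connecting paths required for $\Cscr^0$ approximation.
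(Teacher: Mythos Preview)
Your proof is correct in outline and is essentially the standard Sussmann--Stefan/orbit argument for Chow--Rashevskii. Note, however, that the paper does \emph{not} prove this theorem: it is stated as a classical result and attributed to Chow (1939) and Rashevskii (1938), with pointers to Gromov and Sussmann for modern proofs. The only argument the paper supplies is a one-sentence justification of the approximation statement: ``subdivide $\gamma$ into short arcs and connect the division points by integral curves lying in small connected open sets in $M$.'' So you have gone well beyond what the paper actually does.

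One comment on your approximation step. Your concern that ``mere openness of the reachable set does not by itself yield the short connecting paths'' is well taken, but the commutator-word construction with fractional-time reparametrization, while correct and constructive, is heavier than needed. The paper's implicit shortcut is simply to apply the \emph{first} part of the theorem to a small connected open neighborhood $U$ of each division point, viewed as a manifold in its own right: the restricted fields $V_i|_U$ still satisfy the bracket-generating hypothesis, so any two points of $U$ are joined by an admissible path \emph{in} $U$, which automatically has diameter at most $\mathrm{diam}(U)$. This gives the required local short connections without any explicit asymptotic analysis of commutators. Your approach has the advantage of being quantitative (it leads toward ball-box estimates and Carnot--Carath\'eodory metric comparisons), but for the bare $\Cscr^0$ approximation statement the localization trick suffices.
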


The approximation statement in Theorem \ref{th:Chow} is an immediate consequence of local connectivity 
by integral curves. Indeed, it suffices to subdivide the curve $\gamma$ into short arcs and connect the division points 
by integral curves lying in small connected open sets in $M$. 

Theorem \ref{th:Chow} has a complex origin. The basic case of vector fields tangent to the standard contact 
distribution on $\R^3$ was observed by Carath\'eodory (1909). The result was proved in essentially this form
by Chow \cite{Chow1939MA} in 1939; a similar result was obtained by Rashevski (1938). An informative historical
discussion can be found in Gromov's paper on the {\em Carnot-Carath{\'e}odory metrics}  
(see \cite[\textsection 0.2, p.\ 86]{Gromov1996PM});
these are metrics defined by curves tangent to a distribution spanned by a collection of vector fields. 
Gromov gave a proof of Chow's theorem in \cite[p.\ 113]{Gromov1996PM}, followed by a stronger quantitative version 
of it on p.\ 114. The proof can also be found in numerous other sources. Further, more precise results were obtained  
by Sussman \cite{Sussmann1973TAMS,Sussmann1973BAMS}. These notions also appear in optimal control theory and robotics
under the name of {\em controllability}; see the references in  \cite[\textsection0.2, p.\ 86]{Gromov1996PM}.

\begin{proof}[Proof of Theorem \ref{th:path-approx}]
By Theorem \ref{th:Chow}, $\gamma$ can be approximated in the $\Cscr^0$ topology
by piecewise smooth paths  $\wt\lambda\colon [0,1]\to M$ satisfying the conclusion of the theorem. 
We need to replace $\wt\lambda$ by a smooth embedding tangent to $\Lscr$
and matching $\gamma$ at the endpoints.

Let us first consider the case $M=\C^{2n+1}$ and $\Lscr =\ker\eta$, 
where $\eta$ is the standard holomorphic contact form given by \eqref{eq:eta0}.
Let $\wt\lambda=(\wt x_1,\wt y_1,\ldots,\wt x_n,\wt y_n,\wt z)\colon [0,1]\to \C^{2n+1}$.
By dimension reasons, a slight deformation of the map $(\wt x_1,\wt y_1,\ldots,\wt x_n,\wt y_n)\colon [0,1]\to\c^{2n}$ 
provides a smooth embedding $(x_1,y_1,\ldots,x_n,y_n)\colon [0,1]\to\c^{2n}$ such that, setting
\[
	z(t)= \wt z(0)-\int_0^t \sum_{j=1}^n x_j(s)\dot y_j(s)ds,\quad t\in [0,1],
\]
we have that 
\begin{itemize}
\item $z(t)\approx \wt z(t)$ for all $t\in[0,1]$,
\vspace{1mm}
\item $z(t)= \wt z(t)$ for all $t\in\{0,1\}$, and
\vspace{1mm}
\item if $\dot\gamma(0)$ and $\dot\gamma(1)$ lie in $\ker\eta$ at the respective points, then $\dot z(t)=\dot\gamma (t)$ for $t\in\{0,1\}$.
\end{itemize}
Thus, the smooth embedding $\lambda=(x_1,y_1,\ldots,x_n,y_n,z)\colon I\to\c^{2n+1}$ satisfies the conclusion of the theorem.

In the general case, we choose a division $0=t_0<t_1<\cdots t_k=1$ of $[0,1]$ such that for every $i=1,\ldots,k$
we have $\wt\lambda([t_{i-1},t_i])\subset U_i$, where $U_i\subset M$ is a connected coordinate neighborhood 
such that the restriction $\Lscr|_{U_i}$ is given by the contact form \eqref{eq:eta0} (cf.\ Theorem \ref{th:Darboux-contact}).
The above argument can then be applied within each $U_i$, making sure that the  
embedded Legendrian curves $\lambda_i\colon [t_{i-1},t_i]\to U_i$ $(i=1,\ldots,k)$ smoothly match at the 
respective endpoints and do not intersect elsewhere. 
\end{proof}

%%%%%%%%%%
%%%%%%%%%%
%%%%%%%%%%
%%%%%%%%%%   THANKS
%%%%%%%%%%
%%%%%%%%%%

\subsection*{Acknowledgements}
A.\ Alarc\'on is supported by the Ram\'on y Cajal program of the Spanish Ministry of Economy and Competitiveness.
A.\ Alarc\'on and F.\ J.\ L\'opez are partially supported by the MINECO/FEDER grant no. MTM2014-52368-P, Spain. 
F.\ Forstneri\v c is partially  supported  by the research program P1-0291 and the grant J1-7256 from 
ARRS, Republic of Slovenia. 

The authors would like to thank Yakov Eliashberg for 
having provided some of the references related to Theorem \ref{th:Chow}, and Jun-Muk Hwang for 
useful information regarding the results on compact Legendrian submanifolds of compact complex contact manifolds.

%%%%%%%%%%
%%%%%%%%%%
%%%%%%%%%%
%%%%%%%%%%   THE BIBLIOGRAPHY
%%%%%%%%%%
%%%%%%%%%%

%{\bibliographystyle{abbrv} \bibliography{bibADFL}}

%%%%%%%%%%
%%%%%%%%%%
%%%%%%%%%%
%%%%%%%%%%   AFFILIATIONS
%%%%%%%%%%
%%%%%%%%%%

%\vfill\eject

%\vspace*{0.3cm}
\noindent Antonio Alarc\'{o}n

\noindent Departamento de Geometr\'{\i}a y Topolog\'{\i}a e Instituto de Matem\'aticas (IEMath-GR), Universidad de Granada, Campus de Fuentenueva s/n, E--18071 Granada, Spain.

\noindent  e-mail: {\tt alarcon@ugr.es}

\vspace*{0.3cm}
\noindent Franc Forstneri\v c

\noindent Faculty of Mathematics and Physics, University of Ljubljana, and Institute
of Mathematics, Physics and Mechanics, Jadranska 19, SI--1000 Ljubljana, Slovenia.

\noindent e-mail: {\tt franc.forstneric@fmf.uni-lj.si}

\vspace*{0.3cm}
\noindent Francisco J.\ L\'opez

\noindent Departamento de Geometr\'{\i}a y Topolog\'{\i}a e Instituto de Matem\'aticas (IEMath-GR), Universidad de Granada, Campus de Fuentenueva s/n, E--18071 Granada, Spain

\noindent  e-mail: {\tt fjlopez@ugr.es}

\end{document}